    \crefname{ex}{Example}{Examples}
    \crefname{thm}{Theorem}{Theorems} 
    \crefname{lem}{Lemma}{Lemmas}
    \crefname{prop}{Proposition}{Propositions}
    \crefname{cor}{Corollary}{Corollaries} 
    \crefname{conj}{Conjecture}{Conjectures} 
    \crefname{defn}{Definition}{Definitions}
    \crefname{rmk}{Remark}{Remarks}
	\newtheorem{thm}{Theorem}[section]
	\newtheorem{lem}[thm]{Lemma}
	\newtheorem{prop}[thm]{Proposition}
	\newtheorem{cor}[thm]{Corollary}
	\newtheorem{conj}{Conjecture}[section]
	\theoremstyle{definition} 
	\newtheorem{defn}[thm]{Definition}
	\newtheorem{ex}[thm]{Example}
	\newtheorem{exs}[thm]{Examples}
    \newtheorem{rmk}[thm]{Remark}	
    \newcommand{\ac}{{\it 1-}$alt^c$\ }
    \newcommand{\at}{{\it 2-}$alt$\ }
\title{Classification of multistationarity for mass action networks with one-dimensional stoichiometric subspace}
\author{Casian Pantea\thanks{cpantea@math.wvu.edu} }
\author{Galyna Voitiuk\thanks{galvoit@gmail.com}}
\affil{Department of Mathematics, West Virginia University}
\date{}
\begin{document}
\maketitle

\abstract{We characterize completely the capacity for (nondegenerate) multistationarity of mass action reaction networks with one-dimensional stoichiometric subspace in terms of reaction structure. Specifically, we show that networks with two or more source complexes have the capacity for multistationarity if and only if they have both patterns $(\to, \gets)$ and $(\gets, \to)$ in some 1D projections. Moreover, we specify the classes of networks for which only degenerate multiple steady states may occur. In particular, we characterize the capacity for nondegenerate multistationarity of small networks composed of one irreversible and one reversible reaction, or two reversible reactions.

\section{Introduction}

Multistationarity (i.e. the existence or two or more compatible positive equilibrium points) underlies  switching behavior in biochemical systems, and is a key mathematical feature of systems that generate multiple outputs in response to different external signals or stimuli~\cite{angeli2004detection, craciun2006understanding, ferrell2002self}. This phenomenon plays crucial roles in cell biology, including in generating sustained oscillatory responses, remembering transitory stimuli, differentiation, or apoptosis \cite{markevich,ferrell, murray, tyson81,pomerening}.

While the question of determining which reaction networks admit multistationarity is very much open, many powerful techniques relating the structure of a reaction network with its capacity for multistationarity are known. These include tools based on the injectivity of the vector field~\cite{CracFein05, Craciun.2006ac, CracFein2006,joshi2012simplifying,wiuf2013power,Mueller.2016aa,BanPant16}, degree theory \cite{conradi2017graph,conradi2017identifying}, and inheritance, where the multistationarity of a reaction network is inherited from multistationarity of its substructures \cite{BanPant16, Banaji_20220}. This creates the possibility of ``lifting" multistationarity from idealized network motifs~\cite{alon2007network} to larger and more realistic networks.

There is already a significant amount of work on cataloguing classes of motifs by their presence or absence of multistationarity. These include all bimolecular open networks with two reactions (both reactions reversible or irreversible) \cite{JoshiShiu13}, fully open as well as  isolated sequestration networks in arbitrary number of species  and reactions \cite{joshi2015survey}, fully open networks with cyclic DSR graph ({\em CST networks} \cite{craciun2022multistationarity}), and networks with two reactions \cite{JoshiShiu17}. 

Our work adds to the catalog the class of reaction networks with one-dimensional stoichiometric subspace answering in the affirmative a conjecture posed by Joshi and Shiu:

\begin{conj}[Question 6.1 \cite{JoshiShiu17}]\label{conj:1}
A reaction network with one-dimensional stoichiometric subspace and more than one source complex has the capacity for multistationarity if and only if it has a one-species embedded subnetwork with the pattern $(\to,\gets)$, and another (possibly the same) with pattern $(\gets, \to)$.   
\end{conj}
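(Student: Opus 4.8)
To prove \Cref{conj:1} the plan is to reduce the statement to a question about a single polynomial and then treat the two implications separately. Since the stoichiometric subspace is $S=\mathbb{R}v$ for some nonzero $v$, every reaction vector is $y'-y=c_{y\to y'}\,v$ with $c_{y\to y'}\neq 0$, and the mass-action vector field is $\dot x=v\,g(x)$ with $g(x)=\sum_{y}\mu_y\,x^y$, the sum over source complexes $y$ and $\mu_y=\sum_{y':\,y\to y'}\kappa_{y\to y'}c_{y\to y'}$. Two positive steady states are compatible exactly when they differ by a multiple of $v$, so the network is (nondegenerately) multistationary if and only if there are positive rate constants and a point $x^\ast\in\mathbb{R}^n_{>0}$ for which the one-variable polynomial $t\mapsto g(x^\ast+tv)$ has two (simple) zeros on the interval where $x^\ast+tv\in\mathbb{R}^n_{>0}$. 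I would also record the dictionary with patterns: since $(y'-y)_j=c_{y\to y'}v_j$, a reaction at source $y$ contributes the arrow $\to$ in the embedded one-species subnetwork on $X_j$ exactly when $\operatorname{sign}(c_{y\to y'})=\operatorname{sign}(v_j)$; hence, writing $\mathcal P$ for the source complexes with an outgoing reaction having $c>0$ and $\mathcal N$ for those with one having $c<0$, the pattern $(\to,\gets)$ appears in some one-species embedded subnetwork iff there exist $a\in\mathcal P$, $b\in\mathcal N$ and an index $j$ with $v_j(a_j-b_j)<0$, and $(\gets,\to)$ appears iff such data exists with $v_j(a_j-b_j)>0$.

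I would prove necessity (multistationary $\Rightarrow$ both patterns) by contraposition; this is the short half. Suppose $(\gets,\to)$ is absent. By the dictionary, for every choice of rate constants and all $a$ with $\mu_a>0$, $b$ with $\mu_b<0$, and every $j$, one has $v_j(a_j-b_j)\le 0$, so $x^b/x^a$ is nondecreasing along every line in direction $v$ (its logarithmic derivative is $\sum_j v_j(b_j-a_j)/(x^\ast_j+tv_j)\ge 0$). Writing $g=\sum_{\mu_a>0}\mu_a x^a-\sum_{\mu_b<0}|\mu_b|x^b$ (we may assume both sums nonempty, else there are no nondegenerate equilibria), a zero of $g$ on the line solves $R(t):=\big(\sum_{\mu_b<0}|\mu_b|x^b\big)\big/\big(\sum_{\mu_a>0}\mu_a x^a\big)=1$; each summand of $R$ equals $\big(\sum_a(\mu_a/|\mu_b|)\,x^a/x^b\big)^{-1}$, a reciprocal of a sum of nonincreasing positive functions, hence is nondecreasing, so $R$ is nondecreasing along the line. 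Therefore $\{t:g(x^\ast+tv)=0\}$ is an interval: empty, a single point (one equilibrium), or a proper subinterval on which $g\equiv 0$ (only degenerate equilibria); in no case does a line carry two nondegenerate compatible positive steady states. The case where $(\to,\gets)$ is absent is identical after replacing $R$ by $1/R$. As a byproduct this pins down exactly when only degenerate multiple steady states can occur.

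For sufficiency (both patterns $\Rightarrow$ multistationary) I would extract a small ``core'' and appeal to inheritance. From the two patterns pick witnessing source complexes $a,b$ and an index $j$ with $v_j(a_j-b_j)<0$, and $c,d$ and an index $k$ with $v_k(c_k-d_k)>0$, together with the corresponding reactions; after possibly adjoining reverse reactions (an inheritance-safe operation) and relabelling, these assemble into an embedded subnetwork $N_0$ of $N$ of one of the small types we classify separately (at most two reversible reactions, or one irreversible and one reversible reaction). The crucial step is to show $N_0$ is nondegenerately multistationary; I would do this by a direct construction, choosing the base point $x^\ast$ so that the coordinates degenerating at the two ends of the admissible interval are precisely those making positive-coefficient monomials dominate there — so that $g_0(x^\ast+tv)$ is positive near both ends of the interval but negative at an interior point, producing two zeros by the intermediate value theorem, which one then makes simple by a small perturbation of the rate constants; alternatively one invokes the explicit classification of those small networks. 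Finally, nondegenerate multistationarity passes from $N_0$ up to $N$ by the inheritance results of \cite{BanPant16, Banaji_20220} (or directly: give the reactions of $N_0$ large rate constants and all other reactions of $N$ vanishingly small ones, so that $g$ is a small perturbation of the core polynomial and its two simple zeros persist).

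The main obstacle is this core lemma — that an embedded subnetwork exhibiting both patterns is nondegenerately multistationary. The difficulty is genuinely part combinatorial, part analytic: the two patterns may be witnessed by source complexes that share one, two, or no complexes, in the same or in different species, and with $v_j,v_k$ of either sign, so several configurations must be treated; in each one must produce rate constants for which $g_0(x^\ast+tv)$ — a polynomial whose monomials need not be monotone along the line in direction $v$ — has two simple zeros inside the admissible interval, compatibly with the nondegeneracy needed for the inheritance step (the hypothesis of more than one source complex merely excludes the trivial case where $g$ is a single monomial and has no positive zeros at all). Everything else — the reduction to $g$, the pattern dictionary, necessity, and the lift from $N_0$ to $N$ — I expect to be routine once the inheritance machinery is in place.
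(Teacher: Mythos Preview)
Your necessity argument is essentially correct and parallels the paper's. One small point: as stated, the monotonicity of $R$ only yields that the zero set of $g$ along a compatibility line is an interval, which in principle allows a continuum of (degenerate) equilibria and hence MPE. To conclude no MPE you must rule out the proper-interval case; the paper does this by dividing through by $(\eta+tv)^\delta$ for a carefully chosen $\delta$ so that both sides of the equilibrium equation become \emph{strictly} monotone.

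The real gap is in sufficiency. Your core lemma --- that a minimal embedded subnetwork witnessing both patterns is nondegenerately multistationary --- is false. The simplest counterexample is a two-reaction zigzag on two species with source complexes $a,b$ satisfying $b-a\parallel(1,-1)$ and reaction vectors parallel to $(1,1)$: here $g(x^*+tv)$ is either strictly monotone (at most one zero) or identically zero, so every multiple equilibrium is degenerate, for \emph{every} choice of rate constants. The paper identifies four structural classes ${\cal S}_2^z$, ${\cal L}$, ${\cal S}_2^{nz}$, ${\cal C}$ of 1-alt complete networks with this property (MPE but no MPNE for any rate constants), and by the very inheritance theorems you invoke, no embedded subnetwork of such a network can have MPNE either --- so your core-and-lift strategy cannot reach them. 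The paper's proof therefore bifurcates: outside these exceptional classes it proves MPNE via inheritance from explicit 2D projections (a substantial case analysis on arrow diagrams), while inside them it establishes MPE by exhibiting rate constants for which $f(\eta,\kappa;t)$ vanishes identically along a compatibility line. Your sign-change construction does not produce this identically-zero situation, and perturbing by small rate constants on the remaining reactions does not preserve a continuum of degenerate zeros. Identifying and separately handling these four degenerate classes is the substance of the sufficiency proof, not a side issue.
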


Additionally, we characterize which networks with one-dimensional stoichiometric subspace can have {\em nondegenerate} equilibria. The classification of networks with nondegenerate equilibria  was required in our proof of Conjecture \ref{conj:1}, but it is important on its own: for example inheritance results only apply for nondegenerate equilibria. Our classification of nondegeneracy extends and generalizes results from the literature \cite{JoshiShiu17, ShiuWolff19, Lin2022}. In particular we prove a conjecture by Shiu and de Wolff regarding characterization of nondegenerate multistationarity for networks with one irreversible and one reversible reaction.  

Conjecture \ref{conj:1} is known to hold for special cases. In particular, the conjecture is true for networks with two reactions (Joshi and Shiu \cite{JoshiShiu17}) and under some technical additional assumptions (Lin, Tang and Zhang \cite{Lin2022}). These are discussed in detail in Section \ref{sec:disc}. 

Our proof depends in a critical way on two inheritance theorems for nondegenerate equilibria. The first (Joshi and Shiu \cite[Theorem 3.1]{JoshiShiu13}, Banaji and Pantea \cite[Theorem 1]{BanPant18}) studies persistence of multistationarity if reactions are added to the network, and the second  (Banaji et al. \cite[Theorem 1]{Banaji_2020}) if species are added to the network. These results are discussed at the end of section \ref{sec:multistat}. 

The paper is structured as follows: general reaction networks terminology is introduced in section \ref{sec:prelim}; section \ref{sec:1Drn} sets notation for networks with one-dimensional stoichiometric subspace and defines classes of networks needed to state our main result (Theorem \ref{thm:mainext}). The main result, examples, and connection with literature are presented in section \ref{sec:main}, and section \ref{sec:proofs} contains the proof. 


\section{Background}\label{sec:prelim}
\subsection{Reaction networks and kinetics}\label{sec:rnk}
Throughout the paper $\{e_1,\ldots, e_n\}$ denotes the standard basis of ${\mathbb R}^n$ and $\mathbb R_{>0}^n$, $\mathbb R_{\ge 0}^n$  denote the subsets of $\mathbb R^n$ containing vectors with positive and non-negative entries respectively. The set of $n$-dimensional vectors with nonnegative integer entries is denoted by ${\mathbb Z}_{\ge 0}^n$. We review standard terminology on reaction networks.

A {\em reaction} on a list of {\em species} $X=(X_1,\ldots X_n)$ has the general form
\begin{equation}\label{eq:genReact}
a_1X_1+a_2X_2+\ldots+a_nX_n\to b_1X_1+b_2X_2+\ldots+b_nX_n,
\end{equation}
where and $a=(a_1,\ldots, a_n)$ and $b=(b_1,\ldots, b_n)$ are vectors in ${\mathbb Z}_{\ge 0}^n$. Entries $a_i$ and $b_i$ are called {\em stoichiometric coefficients} of species $X_i$ in $a$ and $b$. The {\em source complex} of (\ref{eq:genReact}) is
$a\cdot X := a_1X_1+a_2X_2+\ldots+a_nX_n$, and $b\cdot X$ is called the {\em product complex} of the reaction. $b-a$  is called the {\em   reaction vector} of (\ref{eq:genReact}).

\smallskip

A {\em reaction network} $\cal N$ on a list of species $X=(X_1,\ldots, X_n)$
is a finite list of reactions on $X$. We require that source and product complexes differ for each reaction, and that no reaction is listed multiple times. 

Once an order is chosen for reactions in $\cal N$, their reaction vectors form the columns of the {\em stoichiometric matrix} $\Gamma\in{\mathbb Z}^{n\times r}$.  The image of $\Gamma$ is called the {\em stoichiometric subspace} of the network, and its rank is called the {\em rank of $\cal N$} \cite{Banaji_2020}. For the remainder of the paper we refer to networks with one-dimensional stoichiometric subspace as {\em rank 1 reaction networks}. 

The vector of {\em concentrations} of $X_1,\ldots, X_n$ is denoted by $x\in\mathbb R^n_{\ge 0}$. In deterministic spatially homogeneous models the time evolution of concentration is commonly modeled using {\em mass action kinetics}, where the {\em reaction rate} of $a\to b$ is given by 
$\kappa x^a:=\kappa x_1^{a_1} \ldots x_n^{a_n}$. Here $\kappa$ is a positive constant that depends on the reaction, called the {\em rate constant} of the reaction. 

A reaction network $\cal N$ with an assignment of rate constants $\kappa=(\kappa_1,\ldots, \kappa_m)$ is called a {\em mass action system} and is denoted by $({\cal N},\kappa)$. Under mass action the time evolution of the concentration vector $x$ is governed by the ODE system of $({\cal N},\kappa):$
\begin{equation}\label{eq:genKin}
\frac{\text{d}x}{\text{d}\tau} = \Gamma r(x(\tau)).
\end{equation}
 $\tau$ denotes the time variable, and
$r(x)=(r_1(x),\ldots r_m(x))^t$ is the vector of reaction rates (i.e. the {\em rate vector}). Note that $x(\tau)$ stays nonnegative for any $\tau\ge 0$ (see for example \cite{Volpert72}).

\subsection{Compatibility classes and multistationarity}\label{sec:multistat}

Integrating \eqref{eq:genKin} with respect to time we have
$$x(\tau)=x(0)+\Gamma\int_{0}^T r(x(s))\text{d}s,$$
in other words the solutions of \eqref{eq:genKin} are constrained to affine sets of
the form 
$$(\eta+\text{im }\Gamma)\cap{\mathbb R}_{\ge 0},$$ where $\eta\in{\mathbb R}_{\ge 0}^n$. These are called {\em compatibility classes}. 

A {\em positive equilibrium}, or {\em positive steady state} of a mass action system $({\cal N},\kappa)$ is a point $x^*\in\mathbb R^n_{>0}$ such that $$\Gamma r(x^*)=0.$$ 

{\em Multistationarity} refers to the existence of two or more positive equilibria. With solutions being constrained to compatibility classes, the notion of multistationarity is at the level of each compatibility class:

\begin{defn}\em{(Capacity for MPE).} We say that a reaction network $\cal N$ {\em has the capacity for multiple positive equilibria (MPE)} if there exists an assignment of rate constants $\kappa$ such that the reaction system $({\cal N},\kappa)$ has two or more positive equilibria belonging to the  same compatibility class of $\cal N$.
\end{defn}

\begin{ex}\label{ex:nonMPE}
Consider the reaction network \footnote{ $0=0X_1+0X_2$ denotes the {\em zero complex}, which may be interpreted as a placeholder for the exterior of the environment where the reactions take place, or for species that we do not include in our model. For example (\ref{eq:ex1intro}) may represent the biochemically realistic network $X_1+X_1\to 2X_3$, $2X_3\to X_1+X_2$, where $X_3$ is so abundant that it is considered constant for all practical purposes, and not included in the model.}
\begin{equation}
\label{eq:ex1intro}
{\cal N}_1=\{X_1+X_2\to 2X_1+2X_2,\quad  X_1+X_2{\to} 0,\quad 0\to X_1+X_2\}. 
\end{equation}
Letting $\kappa_1$ and $\kappa_2$ denote the rate constants of the two reactions, under mass action equilibrium points satisfy the equation
$$
\Gamma r(x)=
\begin{pmatrix}
1 &-1 &1\\
1 &-1 &1
\end{pmatrix}
\begin{pmatrix}
\kappa_1x_1x_2\\
\kappa_2x_1x_2\\
\kappa_3
\end{pmatrix}
=0,
$$
or $(\kappa_1-\kappa_2)x_1x_2+\kappa_3=0$; this curve is called {\em the steady state manifold} of mass action system $({\cal N}_1,\kappa)$. 
The stoichiometric subspace of ${\cal N}_1$ is one-dimensional and compatibility classes are cosets of $\text{im}\Gamma=\text{span}((1,1)^t)$ intersected with the positive quadrant, i.e. line segments of the form $x_2-x_1=T$, Figure \ref{fig:exintro}$(a)$. Positive equilibria in the compatibility class $x_2-x_1=T$ are obtained by solving the system 
$$(\kappa_1-\kappa_2)x_1x_2+\kappa_3=0,\quad x_2-x_1=T.$$ 
This has no positive solutions for $\kappa_1>\kappa_2$ and exactly one solution when $\kappa_1<\kappa_2$. The stoichiometric class $x_2-x_1=T$ contains no equilibrium points or exactly one steady state 
$(1/2)(-T+\sqrt{T^2-\frac{4\kappa_3}{\kappa_1-\kappa_2}}, T+\sqrt{T^2-\frac{4\kappa_3}{\kappa_1-\kappa_2}})$.
Therefore ${\cal N}_1$ does not have the capacity for MPE. 
\end{ex}

\begin{ex}\label{ex:ex2intro}
The reaction network
\begin{equation}
\label{eq:ex2intro}
{\cal N}_2=\{2X_1+X_2{\to} 3X_1,\quad  X_1{\to} X_2\} 
\end{equation}
is a subnetwork of a bistable network for modeling apoptosis \cite{ho2010bistability} and
is one of the simplest networks with bistability \cite{joshi13, BanPant18}.
Assigning rate constants $\kappa_1=\kappa_2=1$ to reactions above, equilibria are computed by solving the equation
$$
\Gamma r(x)=
\begin{pmatrix}
1 &-1\\
-1 &1
\end{pmatrix}
\begin{pmatrix}
x_1^2x_2\\
x_1
\end{pmatrix}
=0,
$$
and the steady state manifold of $({\cal N}_2,\kappa)$ in the positive orthant is $x_1x_2=1$. The stoichiometric subspace is spanned by $(1,-1)^t$ and compatibility have the form $x_1+x_2=T$. Positive equilibria in the compatibility class $x_1+x_2=T$ are obtained by solving the system 
$$x_1x_2=1,\quad x_1+x_2=T.$$ 
This gives $x_1^2-Tx_1+1=0$; there are no positive equilibria if $T<2$, one positive equilibrium $(x_1,x_2)=(1,1)$ if $T=2$ and two positive equilibria $(\frac{T+\sqrt{T^2-4}}{2},\frac{T-\sqrt{T^2-4}}{2})$ and $(\frac{T-\sqrt{T^2-4}}{2},\frac{T+\sqrt{T^2-4}}{2})$ if $T>2$ (Figure \ref{fig:exintro}$(b)$). ${\cal N}_2$ has the capacity for MPE. 
\begin{figure}
\begin{center}
\begin{tikzpicture}[scale=1.5, scale = 1.5]
\begin{scope}
\draw[->, color=black!60] (0,0) -- (1.5,0);
\draw[->, color=black!60] (0,0) -- (0,1.5);
\draw[thick,scale=0.5,domain=0.037:2.5,smooth,variable=\y]  plot ({\y},{.1/\y});
\draw[color=black!60] (0,.4) -- (.9,1.3);
\draw[color=black!60] (0,.8) -- (.5,1.3);
\draw[color=black!60] (0,0) -- (1.3,1.3);
\draw[color=black!60] (.4,0) -- (1.3,.9);
\draw[color=black!60] (.8,0) -- (1.3,.5);
\fill (.155,.155) circle (.4pt);
\fill (.455,.055) circle (.4pt);
\fill (.83,.03) circle (.4pt);
\fill (.045,.845) circle (.4pt);
\fill (.06,.46) circle (.4pt);
\node[color=black!60] (a) at (1.5,.13) {$x_1$};
\node[color=black!60] (a) at (.13, 1.5) {$x_2$};
\node at (.75,-.3) {$(a)$};
\end{scope}
\begin{scope}[xshift=60]
\draw[->, color=black!60] (0,0) -- (1.5,0);
\draw[->, color=black!60] (0,0) -- (0,1.5);
\draw[thick,scale=0.5,domain=0.037:2.6,smooth,variable=\y]  plot ({\y},{.1/\y});
\draw[color=black!60] (0,.15) -- (.15,0);
\draw[color=black!60] (0,.31) -- (.31,0);
\draw[color=black!60] (0,.7) -- (.7,0);
\draw[color=black!60] (0,1) -- (1,0);
\draw[color=black!60] (0,1.3) -- (1.3,0);
\fill (.155,.155) circle (.4pt);
\fill (.055,.65) circle (.4pt);
\fill (.04,.96) circle (.4pt);
\fill (.66,.04) circle (.4pt);
\fill (.98,.03) circle (.4pt);
\fill (1.3-0.019,0.019) circle (.4pt);
\fill (0.019,1.3-0.019) circle (.4pt);
\node (a) at (.24,.24) {\small $A$};
\node (a) at (.16,.96) {\small $B$};
\node (a) at (.98,.14) {\small $C$};
\node[color=black!60] (a) at (1.5,.13) {$x_1$};
\node[color=black!60] (a) at (.13, 1.5) {$x_2$};
\node at (.75,-.3) {$(b)$};
\end{scope}
\begin{scope}[xshift=120]
\draw[->, color=black!60] (0,0) -- (1.5,0);
\draw[->, color=black!60] (0,0) -- (0,1.5);
\draw[color=black!60] (0,.4) -- (.9,1.3);
\draw[color=black!60] (0,.8) -- (.5,1.3);
\draw[thick] (0,0) -- (1.3,1.3);
\draw[color=black!60] (.4,0) -- (1.3,.9);
\draw[color=black!60] (.8,0) -- (1.3,.5);
\node[color=black!60] (a) at (1.5,.13) {$x_1$};
\node[color=black!60] (a) at (.13, 1.5) {$x_2$};
\node at (.75,-.3) {$(c)$};
\end{scope}
\end{tikzpicture}
\end{center}
\caption{Various scenarios for multistationarity. Steady state manifolds are drawn in bold, and compatibility classes with thin lines.   $(a)$ Reaction network (\ref{eq:ex1intro}) does not have the capacity for MPE. Each compatibility class contains exactly one nondegenerate equilibrium. $(b)$ Reaction network (\ref{eq:ex2intro}) has the capacity for MPNE. Compatibility classes may contain no equilibria, one degenerate equilibrium (point $A$), or two nondegenerate equilibria. $(c)$
Reaction network (\ref{eq:ex3intro}) has the capacity for MPE, but not the capacity for MPNE. When equilibria exist, a whole compatibility class is made out of equilibria. }
\label{fig:exintro}
\end{figure}
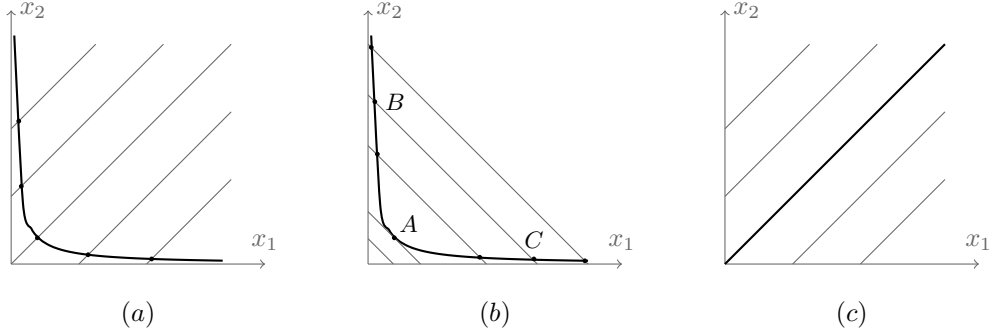
\end{ex}

\noindent {\bf Nondegenerate equilibria}. At equilibrium point $A$ in Figure \ref{fig:exintro}$(b)$ the steady state manifold and the compatibility class intersect tangentially, while at equilibrium points $B$ and $C$ the intersection is transversal. This distinction is an important one: $A$ is called a {\em degenerate equilibrium}, while $B$ and $C$ are called {\em nondegenerate equilibria}. To be precise, a positive equilibrium point $x^*$ is called nondegenerate if (see for example \cite[Definition 4]{Craciun.2006ac})
\begin{equation}\label{eq:degen}
\ker (\Gamma Dr(x)|_{x=x^*})\cap \text{im }\Gamma=\{0\}.    
\end{equation}
Equivalently, a positive equilibrium $x^*$ is nondegenerate if the Jacobian of the vector field projected on compatibility classes ({\em reduced Jacobian} \cite[Appendix A.]{BanPant16} or {\em core determinant} \cite{Helton.2009aa}) is nonzero. Conveniently for calculations, the reduced Jacobian is the sum of all $k\times k$ principal minors of the Jacobian matrix $\Gamma Dr(x)_{x=x^*}$, where $k=\text{rank }\Gamma$ is the dimension of the stoichiometric subspace \cite{BanPant16}. 

\begin{defn}\em{(Capacity for MPNE).} We say that a reaction network $\cal N$ {\em has the capacity for multiple positive nondegenerate equilibria (MPNE)} if there exist an assignment of rate constants $\kappa$ such that the mass action system $({\cal N},\kappa)$ has two or more positive nondegenerate equilibria belonging to the  same compatibility class of $\cal N$.
\end{defn}

The Jacobian matrix of network ${\cal N}_1$ in example \ref{ex:ex2intro} taken with rate constants $\kappa_1=\kappa_2=1$ is 
$$\Gamma Dr(x)=
\begin{pmatrix}
2x_1x_2-1 &x_1^2\\
-2x_1x_2+1 &-x_1^2
\end{pmatrix}
$$
and its reduced Jacobian is equal to 
$-x_1^2+2x_1x_2-1$. 
At equilibrium points $x^*$ this expression is equal to $-(x^*_1)^2+1$, and it follows that $(1,1)$ is a degenerate equilibrium, whereas all other equilibrium points are nondegenerate. Reaction network (\ref{eq:ex2intro}) has the capacity for MPNE. 

Some reaction networks exhibit pathological multistationarity behaviour, in that equilibria, if they exists, are all degenerate.

\begin{ex}\label{ex:ex3intro}
Consider the reaction network
\begin{equation}\label{eq:ex3intro}
{\cal N}_3=\{X_1+2X_2\to 2X_1+3X_2,\quad 2X_1+X_2\to X_1\}
\end{equation}
\end{ex}
The steady state manifold in the positive quadrant is $\kappa_1x_2-\kappa_2x_1=0$. The stoichiometric subspace of ${\cal N}_3$ is  spanned by $(1,1)^t$ and compatibility classes are of the form $x_2-x_1=T$. 
Positive equilibria in the compatibility class $x_2-x_1=T$ obey the equation 
$(\kappa_1-\kappa_2)x_1+\kappa_1T=0$, which has  positive solutions if and only if $\kappa_1=\kappa_2$ and $T=x_2-x_1=0$. In other words, all positive points in the compatibility class $x_2-x_1=0$ are equilibria of ${\cal N}_3$, and no other equilibria exist (Figure \ref{fig:exintro}$(c)$). The reduced Jacobian  $\kappa_1x_2^2-\kappa_2x_1^2$ is zero at equilibrium points and therefore all equilibria are degenerate: ${\cal N}_3$ has the capacity for MPE, but does not have the capacity for MPNE. 

We conclude this section with a useful observation that allows us to remove species with corresponding zero coordinates in each reaction vector; their concentrations are constant functions of time. 
Let $\cal N$ be a reaction network with stoichiometric matrix $\Gamma$ and suppose row $i_0$ of $\Gamma$ is zero. If $a$ is a complex of $\cal N$ we let $\bar a$ denote the complex obtained by removing species $i_0$ from $a$, and let 
$$\bar{\cal N}=\{\bar a\to \bar b|a\to b\in{\cal N}\}.$$
($\bar{\cal N}$ is obtained from ${\cal N}$ by removing species $i_0$ from all reactions).
If $\kappa_k$ denotes the rate constant of  $a\cdot X\to b\cdot X$ we  define $\bar{\kappa}_k=\kappa_kC^{a_{i_0}}$ to be the rate constant of $\bar a\to \bar b$.  
We have $\kappa_kx^a=\bar{\kappa_k}{\bar a}^{\bar x}$, so that $r(x)={\bar r}({\bar x})$ (${\bar r}$ denotes the rate vector of $(\bar{\cal N},\bar \kappa)$).

\begin{prop}\label{prop:nonzerov}
Suppose $\cal N$ is a reaction network with row $i_0$ of its stoichiometric matrix equal to zero. For any $C>0$, $y=(y_1,\ldots,y_{i_0-1},C, y_{i_0+1}, \ldots, y_n)\in{\mathbb R}^n_{>0}$ is a (degenerate/nondegenerate) equilibrium point of $({\cal N},\kappa)$ if and only if $\bar y=(y_1,\ldots, y_{i_0-1},y_{i_0+1},\ldots, y_n)$ is a (degenerate/nondegenerate) equilibrium point of $(\bar{\cal N},\bar\kappa)$ where $\bar \kappa$ is defined above.
\end{prop}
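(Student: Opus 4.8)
The plan is to reduce everything to the rate-vector identity $r(y)=\bar r(\bar y)$ that is already recorded just before the statement, keeping careful track of how the deleted species $i_0$ interacts with the Jacobian and with $\operatorname{im}\Gamma$. First I would fix notation for two linear maps: the injection $\iota\colon{\mathbb R}^{n-1}\to{\mathbb R}^n$ that inserts a $0$ in coordinate $i_0$, and the projection $\pi\colon{\mathbb R}^n\to{\mathbb R}^{n-1}$ that deletes coordinate $i_0$, so that $\pi\circ\iota=\mathrm{id}$ and $\iota(\pi v)=v$ whenever $v_{i_0}=0$. Since row $i_0$ of $\Gamma$ vanishes, $\Gamma$ factors as $\Gamma=\iota\circ\bar\Gamma$, where $\bar\Gamma$ is $\Gamma$ with row $i_0$ removed; and $\bar\Gamma$ is precisely the stoichiometric matrix of $\bar{\mathcal N}$, since removing species $i_0$ from every complex removes row $i_0$ from each reaction vector. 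Consequently $\operatorname{im}\Gamma=\iota(\operatorname{im}\bar\Gamma)\subseteq\{v\in{\mathbb R}^n:v_{i_0}=0\}$, and $\iota$ restricts to a linear isomorphism $\operatorname{im}\bar\Gamma\xrightarrow{\ \sim\ }\operatorname{im}\Gamma$. This structural observation is the backbone of the whole argument.

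Next I would dispatch the equilibrium equivalence. Using $r(y)=\bar r(\bar y)$ and $\Gamma=\iota\circ\bar\Gamma$ one gets $\Gamma r(y)=\iota\bigl(\bar\Gamma\,\bar r(\bar y)\bigr)$, and since $\iota$ is injective, $\Gamma r(y)=0$ if and only if $\bar\Gamma\,\bar r(\bar y)=0$; positivity of $y$ and of $\bar y$ are equivalent because $C>0$. So $y$ is a positive equilibrium of $({\mathcal N},\kappa)$ exactly when $\bar y$ is one of $(\bar{\mathcal N},\bar\kappa)$.

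Then I would turn to the Jacobians and the nondegeneracy condition \eqref{eq:degen}. A one-line monomial computation with $r_k(x)=\kappa_k x^a$ and the prescribed $\bar\kappa_k=\kappa_kC^{a_{i_0}}$ shows that for every $j\ne i_0$ the partial derivative $\partial r_k/\partial x_j$ at $y$ equals $\partial\bar r_k/\partial\bar x_j$ at $\bar y$ (differentiating in a direction other than $i_0$ leaves the $i_0$-exponent, hence the factor $C^{a_{i_0}}$, intact). Thus deleting column $i_0$ of $Dr(y)$ produces exactly $D\bar r(\bar y)$, and for any $v$ with $v_{i_0}=0$ we have $Dr(y)\,v=D\bar r(\bar y)\,(\pi v)$. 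Applying $\Gamma=\iota\circ\bar\Gamma$, for $v=\iota(\bar v)\in\operatorname{im}\Gamma$ this gives
\[
\Gamma Dr(y)\,v=\iota\bigl(\bar\Gamma D\bar r(\bar y)\,\bar v\bigr),
\]
so by injectivity of $\iota$, $\iota$ carries $\ker\bigl(\bar\Gamma D\bar r(\bar y)\bigr)\cap\operatorname{im}\bar\Gamma$ isomorphically onto $\ker\bigl(\Gamma Dr(y)\bigr)\cap\operatorname{im}\Gamma$. Hence one intersection is $\{0\}$ if and only if the other is, i.e. $y$ is nondegenerate iff $\bar y$ is. The degenerate case then follows by complementation: among equilibria, ``degenerate'' means ``not nondegenerate,'' and both the equilibrium property and nondegeneracy transfer along $\iota$.

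I expect the only real subtlety to be the Jacobian bookkeeping in the third step: unlike the rows of $\Gamma$, column $i_0$ of $Dr(y)$ is generally nonzero, so it is not literally true that $Dr(y)=\iota\text{-extension of }D\bar r(\bar y)$ as a map; what rescues the argument is that we only ever apply $Dr(y)$ to vectors of $\operatorname{im}\Gamma$, which have vanishing $i_0$-coordinate, making that column irrelevant. Getting the order of operations right — first restrict to $\operatorname{im}\Gamma$, then factor through $\pi$ and $\iota$ — is the one place where care is needed; the rest is routine.
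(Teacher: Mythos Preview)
Your proof is correct and follows essentially the same approach as the paper: factor $\Gamma$ through the subspace $\{v_{i_0}=0\}$, use $r(y)=\bar r(\bar y)$ for the equilibrium part, and observe that only vectors with vanishing $i_0$-coordinate are fed into $Dr(y)$ so its $i_0$-th column is irrelevant. The paper packages this via block matrices and the formula $Dr(x)=D_{r(x)}R^tD_{1/x}$, while you use the maps $\iota,\pi$ and a direct partial-derivative computation, but the arguments are the same in substance.
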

\begin{proof}
We may assume that $i_0=1$. By separating the first row in $\Gamma$ we write in block matrix form 
\begin{equation*}
\Gamma r(y)=\Gamma {\bar r}({\bar y})=
\begin{pmatrix}
0\\
\bar\Gamma
\end{pmatrix}{\bar r}({\bar y})
=\begin{pmatrix}
0\\ 
{\bar\Gamma}{\bar r}({\bar y})
\end{pmatrix},
\end{equation*}
so $\Gamma r(y)=0$ if and only if ${\bar\Gamma}{\bar r}({\bar y})=0$. 

For the nondegeneracy part we use (\ref{eq:degen}). We can write
$Dr(x)=D_{r(x)}R^tD_{1/x}$, where $R$ denotes the {\em reactant matrix} having the source complexes of $\cal N$ as columns, and $D_\eta$ denotes the diagonal matrix with entries of vector $\eta$ on the diagonal. If  $v=(v_1,\ldots, v_n)\in\text{im }\Gamma$ is a nonzero vector, then  $v_1=0$ and $\bar v=(v_2,\ldots, v_n)\in \text{im }{\bar\Gamma}$, $\bar v\neq 0$. Conversely, if ${\bar v}\in\text{im }{\bar\Gamma}$ is nonzero, then $v=(0,{\bar v})\in \text{im }{\Gamma}$ is nonzero. Using block matrix forms we have 
\begin{eqnarray*}
\Gamma Dr(x)|_{x=y}v=
\begin{pmatrix}
0\\
\bar\Gamma
\end{pmatrix}
D_{{\bar r}({\bar x}})
\begin{pmatrix}
R_1 &{\bar R}
\end{pmatrix}
\begin{pmatrix}
1/x_1 &0\\
0 &D_{1/{\bar x}}
\end{pmatrix}
\begin{pmatrix}
0\\
{\bar v}
\end{pmatrix}
=
\begin{pmatrix}
0\\
{\bar\Gamma}D_{{\bar r}({\bar x})}{\bar R}{\bar v}
\end{pmatrix}=
\begin{pmatrix}
0\\
{\bar\Gamma}D{{\bar r}({\bar x})}_{{\bar x}=
{\bar y}}{\bar v}
\end{pmatrix}
\end{eqnarray*}
so $\Gamma Dr(x)|_{x=y}v=0$ if and only if
${\bar\Gamma}D{{\bar r}({\bar x})}_{{\bar x}=
{\bar y}}{\bar v}=0$, 
therefore $y$ is nondegenerate if and only if ${\bar y}$ is.
\end{proof}

Noting that $y$, $z$ are compatible equilibria of ${\cal N}$ if and only of 
$\bar y$, $\bar z$ are compatible equilibria of $\bar{\cal N}$, we have the following 

\begin{cor}\label{cor:nonzerovMPNE}
$\cal N$ has the capacity for MPE (MPNE) if and only if $\bar {\cal N}$ does. 
\end{cor}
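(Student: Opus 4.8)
The plan is to read this off directly from Proposition \ref{prop:nonzerov} together with the compatibility remark that immediately precedes the statement, verifying both implications. Recall that Proposition \ref{prop:nonzerov} provides, for each fixed $C>0$, a bijection between positive equilibria of $(\mathcal N,\kappa)$ whose $i_0$-th coordinate equals $C$ and positive equilibria of $(\bar{\mathcal N},\bar\kappa)$, and that this bijection sends degenerate points to degenerate points and nondegenerate to nondegenerate. What is left is to match up the compatibility relation on the two sides and to account for the passage between $\kappa$ and $\bar\kappa$. As in the proof of Proposition \ref{prop:nonzerov} we may assume $i_0=1$.

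For the forward direction, suppose $\mathcal N$ has the capacity for MPE (resp.\ MPNE): there are rate constants $\kappa$ and two distinct compatible positive equilibria $y,z$ of $(\mathcal N,\kappa)$ (both nondegenerate in the MPNE case). Since row $1$ of $\Gamma$ is zero, every vector of $\text{im }\Gamma$ has first coordinate $0$, so $y-z\in\text{im }\Gamma$ forces $y_1=z_1=:C$. Applying Proposition \ref{prop:nonzerov} with this $C$, the points $\bar y$ and $\bar z$ are positive equilibria of $(\bar{\mathcal N},\bar\kappa)$ (nondegenerate if $y,z$ were); they are distinct because $y$ and $z$ agree in their first coordinate and $y\neq z$; and since $y-z=(0,\bar y-\bar z)$, the computation in the proof of Proposition \ref{prop:nonzerov} (the equivalence $v=(0,\bar v)\in\text{im }\Gamma \iff \bar v\in\text{im }\bar\Gamma$) gives $\bar y-\bar z\in\text{im }\bar\Gamma$, so $\bar y,\bar z$ are compatible. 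Hence $\bar{\mathcal N}$ has the capacity for MPE (resp.\ MPNE).

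For the reverse direction, suppose $\bar{\mathcal N}$ has the capacity for MPE (resp.\ MPNE) via rate constants $\bar\kappa$ and distinct compatible positive equilibria $\bar y,\bar z$ of $(\bar{\mathcal N},\bar\kappa)$. Choose $C=1$ and set $\kappa:=\bar\kappa$; then $\bar\kappa_k=\kappa_k C^{a_{i_0}}$ holds for every reaction, so $\bar\kappa$ is the rate vector induced by $(\mathcal N,\kappa)$ and $C$ in the construction preceding Proposition \ref{prop:nonzerov}. By that proposition the lifts $y=(1,y_2,\ldots,y_n)$ and $z=(1,z_2,\ldots,z_n)$ are positive equilibria of $(\mathcal N,\kappa)$, nondegenerate if $\bar y,\bar z$ were. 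They are distinct, and since $\bar y-\bar z\in\text{im }\bar\Gamma$ the same equivalence from the proof of Proposition \ref{prop:nonzerov} gives $y-z=(0,\bar y-\bar z)\in\text{im }\Gamma$, so $y,z$ are compatible. Hence $\mathcal N$ has the capacity for MPE (resp.\ MPNE).

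I do not expect a real obstacle in this argument; the only two points requiring attention are already isolated inside the proof of Proposition \ref{prop:nonzerov}: first, that compatibility of two equilibria of $\mathcal N$ automatically pins the removed coordinate to a common value $C$, so that a single induced $\bar\kappa$ serves both; and second, the two-way correspondence $\bar v\in\text{im }\bar\Gamma \iff (0,\bar v)\in\text{im }\Gamma$ that transfers the compatibility relation between the two networks.
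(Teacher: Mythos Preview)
Your argument is correct and follows exactly the approach the paper intends: the paper merely records, just before the corollary, that ``$y$, $z$ are compatible equilibria of ${\cal N}$ if and only if $\bar y$, $\bar z$ are compatible equilibria of $\bar{\cal N}$'' and then states the corollary without further detail. You have simply unpacked this remark, correctly isolating the two nontrivial points (that compatibility of $y,z$ forces $y_{i_0}=z_{i_0}$, so a single $C$ serves both, and that choosing $C=1$ in the reverse direction lets one take $\kappa=\bar\kappa$).
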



\medskip

\noindent {\bf Inheritance of multistationarity}. Our proof is based on the following two results.
\begin{thm}[Theorem 3.1 \cite{JoshiShiu13}, Theorem 1 \cite{BanPant18}] \label{thm:inheritance}
Let ${\cal N}$ be a reaction network on $n$ species and ${\cal N}'$ a reaction network on the same species obtained by adding reactions to ${\cal N}$. Suppose ${\cal N}'$ has the same rank as $\cal N$. If ${\cal N}$ has the capacity for MPNE, then so does ${\cal N}'$.
\end{thm}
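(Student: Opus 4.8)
The plan is a perturbation argument. Keep the rate constants that already realise two nondegenerate equilibria on $\mathcal N$, switch on the added reactions with a small parameter $\epsilon>0$, and continue the two equilibria by the implicit function theorem; nondegeneracy is precisely the hypothesis that makes this continuation well defined and that survives it. In detail, let $\Gamma$ be the stoichiometric matrix of $\mathcal N$ with $k=\operatorname{rank}\Gamma$, and let $\mathcal N'$ be obtained by appending reactions $a_j\to b_j$, $j=1,\dots,p$, with stoichiometric matrix $\Gamma'$. Since $\operatorname{im}\Gamma\subseteq\operatorname{im}\Gamma'$ and these spaces have the same dimension by hypothesis, $S:=\operatorname{im}\Gamma=\operatorname{im}\Gamma'$, so $\mathcal N$ and $\mathcal N'$ have the same compatibility classes. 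Fix $\kappa$ for which $(\mathcal N,\kappa)$ has distinct nondegenerate positive equilibria $x^1,x^2$ in one compatibility class $\mathcal C$ (so $x^2-x^1\in S$). For $\epsilon\ge 0$ let $\kappa^\epsilon$ be the rate constant vector of $\mathcal N'$ that agrees with $\kappa$ on the reactions of $\mathcal N$ and equals $\epsilon$ on each new reaction, and let $f_\epsilon=\Gamma' r^\epsilon$ be the corresponding vector field; then $f_0$ is the vector field of $(\mathcal N,\kappa)$ and $f_\epsilon=f_0+\epsilon\sum_{j=1}^p x^{a_j}(b_j-a_j)$, a perturbation taking values in $S$.

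Next I would continue each equilibrium. Pick a linear complement $W$ of $S$ in ${\mathbb R}^n$; a point near $x^\ell$ ($\ell=1,2$) is written uniquely as $x^\ell+u+w$ with $u\in S$, $w\in W$, and the compatibility class through it is $x^\ell+w+S$, which depends only on $w$ and equals $x^1+S$ when $w=0$ for both $\ell$. Define $F^\ell(\epsilon,w,u)=f_\epsilon(x^\ell+u+w)\in S$; it is smooth near the origin and vanishes there, and $\partial_uF^\ell(0,0,0)$ is the linear map $S\to S$ given by $u\mapsto \Gamma Dr(x^\ell)\,u$ (it does land in $S$ because $\operatorname{im}(\Gamma Dr)\subseteq S$). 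By the nondegeneracy criterion \eqref{eq:degen} this map has trivial kernel, hence is invertible, so the implicit function theorem yields a smooth $u^\ell(\epsilon,w)$ with $u^\ell(0,0)=0$ and $f_\epsilon\bigl(x^\ell+u^\ell(\epsilon,w)+w\bigr)=0$. Setting $w=0$ and $y^\ell_\epsilon:=x^\ell+u^\ell(\epsilon,0)$ produces, for each small $\epsilon$, two equilibria of $(\mathcal N',\kappa^\epsilon)$ lying in the common affine set $x^1+S$.

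To finish: as $\epsilon\to 0^+$ we have $y^\ell_\epsilon\to x^\ell$, so for $\epsilon$ small enough $y^1_\epsilon\neq y^2_\epsilon$ and both lie in ${\mathbb R}^n_{>0}$, hence both lie in the single compatibility class $\mathcal C$; and the reduced Jacobian of $(\mathcal N',\kappa^\epsilon)$, the sum of the $k\times k$ principal minors of $\Gamma'Dr^\epsilon(x)$, is jointly continuous in $(\epsilon,x)$ and reduces at $(0,x^\ell)$ to the reduced Jacobian of $(\mathcal N,\kappa)$ at $x^\ell$, which is nonzero, hence it stays nonzero at $(\epsilon,y^\ell_\epsilon)$ for small $\epsilon$. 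Thus $(\mathcal N',\kappa^\epsilon)$ has two distinct nondegenerate positive equilibria in one compatibility class, so $\mathcal N'$ has the capacity for MPNE. The step I expect to be the main obstacle is keeping the two continued equilibria in a common compatibility class --- a priori a perturbation of $f_0$ could push $x^1$ and $x^2$ into different classes. This is exactly what the equal-rank hypothesis resolves: because $\operatorname{im}\Gamma'=\operatorname{im}\Gamma$, the classes of $\mathcal N'$ coincide with those of $\mathcal N$, and solving $F^\ell=0$ for $u$ alone (the coordinate along $S$) while holding the transversal coordinate $w=0$ fixed for both branches is legitimate precisely because $\partial_uF^\ell$ is then a square invertible map; if the rank strictly increased, $F^\ell$ would take values in a larger space, $\partial_uF^\ell$ would fail to be square, and this argument would break down.
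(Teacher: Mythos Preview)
Your argument is correct. The paper itself does not prove this theorem; it quotes it from \cite{JoshiShiu13} and \cite{BanPant18} and uses it as a black box. Your proposal reconstructs the standard perturbation/implicit function theorem argument that underlies those references: switch on the new reactions with a small parameter, use nondegeneracy as the invertibility hypothesis for the IFT applied to the vector field restricted to the common stoichiometric subspace $S=\operatorname{im}\Gamma=\operatorname{im}\Gamma'$, and continue each equilibrium within $S$ while holding the transversal coordinate fixed so the two branches stay in the same compatibility class. The observation that equal rank forces $\operatorname{im}\Gamma=\operatorname{im}\Gamma'$, so that $F^\ell$ maps into $S$ and $\partial_uF^\ell$ is square, is exactly the point; your closing remark identifies this correctly. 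The continuity step for nondegeneracy (via the reduced Jacobian as a sum of principal minors) is clean, and noting that $\Gamma'Dr^0(x)=\Gamma Dr(x)$ because the added reactions have zero rate at $\epsilon=0$ makes that limit transparent.
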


\begin{thm}[Theorem 1 \cite{Banaji_2020}]\label{thm:inheritance2} Let $\cal N$ be a reaction network and let ${\cal N}'$ be obtained from $\cal N$ by adding a new species in any of the complexes of $\cal N$, with any stoichiometric coefficient.  Suppose ${\cal N}'$ has the same rank as $\cal N$. If $\cal N$ has the capacity for MPNE then so does ${\cal N}'$.
\end{thm}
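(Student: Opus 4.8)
\medskip
\noindent\emph{Proof proposal.} Let ${\cal N}$ have species $X_1,\dots,X_n$ and stoichiometric matrix $\Gamma$, and suppose ${\cal N}'$ is obtained by adjoining a new species $X_{n+1}$ which appears with source coefficient $\alpha_j\ge 0$ and product coefficient $\beta_j\ge 0$ in the $j$-th reaction; set $w=(\beta_j-\alpha_j)_j$, so that the stoichiometric matrix of ${\cal N}'$ is $\Gamma'=\bigl(\begin{smallmatrix}\Gamma\\ w^t\end{smallmatrix}\bigr)$. The rank hypothesis is equivalent to $w=\Gamma^t\lambda$ for some $\lambda\in{\mathbb R}^n$; equivalently $(-\lambda,1)\perp\text{im }\Gamma'$, so $x_{n+1}-\lambda^t x$ is conserved along trajectories of ${\cal N}'$. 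Two elementary observations drive everything. First, for $s>0$ and rate constants $\kappa'$ of ${\cal N}'$, the point $(x,s)$ is a positive equilibrium of $({\cal N}',\kappa')$ if and only if $x$ is a positive equilibrium of $({\cal N},\tilde\kappa)$ with $\tilde\kappa_j:=\kappa'_j s^{\alpha_j}$: the rate vector of $({\cal N}',\kappa')$ at $(x,s)$ equals the rate vector of $({\cal N},\tilde\kappa)$ at $x$, so the first $n$ coordinates of $\Gamma'r'(x,s)$ are $\Gamma$ applied to the latter, and the last coordinate is $\lambda^t$ times that, hence vanishes automatically. Second, if $x^{(1)},x^{(2)}$ are compatible in ${\cal N}$, say $x^{(1)}-x^{(2)}=\Gamma v$, then $(x^{(1)},s_1)$ and $(x^{(2)},s_2)$ are compatible in ${\cal N}'$ precisely when $s_1-s_2=\lambda^t(x^{(1)}-x^{(2)})$, since then $\Gamma'v=(x^{(1)}-x^{(2)},\lambda^t(x^{(1)}-x^{(2)}))$.

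Fix $\kappa$ so that $({\cal N},\kappa)$ has distinct compatible nondegenerate positive equilibria $x^{(1)},x^{(2)}$, and put $\delta:=\lambda^t(x^{(1)}-x^{(2)})$. If $\delta=0$ we are essentially done: pick any (large) $s>0$, set $\kappa'_j:=\kappa_j s^{-\alpha_j}$, and by the two observations $(x^{(1)},s)$, $(x^{(2)},s)$ are distinct compatible positive equilibria of $({\cal N}',\kappa')$. The case $\delta\ne 0$ is the crux, and where I expect the real work to lie: compatibility now forces $s_1\ne s_2$, hence (once some $\alpha_j>0$) forces the two effective rate-constant vectors $\tilde\kappa$ for ${\cal N}$ to differ, whereas a priori we only know that $\kappa$ itself realizes both $x^{(1)}$ and $x^{(2)}$. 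I would repair this with a small rate-constant perturbation that is legitimate precisely because the equilibria are nondegenerate: for $\varepsilon$ of the same sign as $\delta$ put $\tilde\kappa^{(1)}_j:=(1+\varepsilon)^{\alpha_j}\kappa_j$; since $x^{(1)}$ is a nondegenerate equilibrium of $({\cal N},\kappa)$, the implicit function theorem applied to $\Gamma r(x)=0$ together with the conservation equations pinning down the compatibility class of $x^{(1)}$ produces a nondegenerate positive equilibrium $\xi^{(1)}(\varepsilon)$ of $({\cal N},\tilde\kappa^{(1)})$, smooth in $\varepsilon$, in the compatibility class of $x^{(1)}$, with $\xi^{(1)}(0)=x^{(1)}$. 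Then set $s_2:=\lambda^t(\xi^{(1)}(\varepsilon)-x^{(2)})/\varepsilon$ (positive for small $\varepsilon$ of sign $\delta$, since the numerator is near $\delta$), $s_1:=(1+\varepsilon)s_2$, and $\kappa'_j:=\kappa_j s_2^{-\alpha_j}$; a direct check via the first observation shows $(\xi^{(1)}(\varepsilon),s_1)$ reduces to $\xi^{(1)}(\varepsilon)$ for $({\cal N},\tilde\kappa^{(1)})$ and $(x^{(2)},s_2)$ reduces to $x^{(2)}$ for $({\cal N},\kappa)$, so both are positive equilibria of $({\cal N}',\kappa')$; they are distinct for small $\varepsilon$, and $s_1-s_2=\varepsilon s_2=\lambda^t(\xi^{(1)}(\varepsilon)-x^{(2)})$ gives compatibility.

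It remains to verify nondegeneracy of these equilibria of ${\cal N}'$. At a point $(x,s)$ reducing to an equilibrium of $({\cal N},\tilde\kappa)$, the Jacobian $\Gamma'Dr'$ has block form $\bigl(\begin{smallmatrix}J & c\\ \lambda^t J & \lambda^t c\end{smallmatrix}\bigr)$ with $J=\Gamma Dr(x;\tilde\kappa)$ and $c=\Gamma\rho$, where $\rho_j=\frac1s\alpha_j\tilde\kappa_j x^{a^{(j)}}$ is the $\partial/\partial x_{n+1}$ column and $c\in\text{im }\Gamma$. Identifying $\text{im }\Gamma'$ with $U:=\text{im }\Gamma$ via $z\mapsto(z,\lambda^t z)$, criterion \eqref{eq:degen} for ${\cal N}'$ becomes injectivity on $U$ of the operator $z\mapsto Jz+(\lambda^t z)c$, a rank-one perturbation of $J|_U$; by the matrix determinant lemma this holds iff $J|_U$ is invertible (i.e. $x$ is nondegenerate in $({\cal N},\tilde\kappa)$, true for $x^{(1)}$ and $x^{(2)}$) and $1+\lambda^t(J|_U)^{-1}c\ne 0$. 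In both of our equilibria $s\to\infty$ — as $\varepsilon\to 0$ when $\delta\ne 0$, or by taking $s$ large when $\delta=0$ — so $\rho\to 0$, hence $c\to 0$ and $1+\lambda^t(J|_U)^{-1}c\to 1$; since all data vary continuously and invertibility of $J|_U$ is an open condition, nondegeneracy in ${\cal N}'$ holds once $\varepsilon$ is small enough (resp.\ $s$ large enough). The implicit-function-theorem step of the previous paragraph and this rank-one/continuity bookkeeping are the only places needing genuine care; the rest is the elementary reduction above.
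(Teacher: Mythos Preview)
The paper does not prove this theorem: it is quoted verbatim as Theorem~1 of \cite{Banaji_2020} and invoked as a black box in the proof of Theorem~\ref{thm:mainext}(3). So there is no ``paper's proof'' to compare your attempt against.

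That said, your argument is correct and self-contained. The reduction of $({\cal N}',\kappa')$-equilibria at level $x_{n+1}=s$ to $({\cal N},\tilde\kappa)$-equilibria with $\tilde\kappa_j=\kappa'_j s^{\alpha_j}$, together with the identification $\text{im }\Gamma'=\{(z,\lambda^t z):z\in\text{im }\Gamma\}$, is exactly the right structural observation; it turns the problem into matching two possibly different rate-constant vectors $\tilde\kappa$ on a single compatibility class of ${\cal N}$. Your handling of the case $\delta\ne 0$ via an implicit-function perturbation $\xi^{(1)}(\varepsilon)$ and the choice $s_2=\lambda^t(\xi^{(1)}(\varepsilon)-x^{(2)})/\varepsilon$, $s_1=(1+\varepsilon)s_2$ is clean and checks out; the nondegeneracy bookkeeping via the rank-one formula $\det(J|_U+c\lambda^t)=\det(J|_U)\bigl(1+\lambda^t(J|_U)^{-1}c\bigr)$ with $c\to 0$ as $s\to\infty$ is also correct (note $Jz$ and $c$ both lie in $U=\text{im }\Gamma$, so the restriction to $U$ is well-posed). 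One phrasing nit: your ``iff'' for invertibility of the rank-one perturbation should really read ``given $J|_U$ invertible, $T$ is invertible iff $1+\lambda^t(J|_U)^{-1}c\ne 0$''; but since nondegeneracy of $x^{(1)},x^{(2)}$ in ${\cal N}$ is part of the hypothesis this does not affect the argument.

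The proof in \cite{Banaji_2020} proceeds via a more general perturbation framework (and covers several inheritance operations simultaneously); your direct approach, tailored to the single-species-addition case, is more elementary and arguably more transparent for this specific statement.
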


Theorems \ref{thm:inheritance}  and \ref{thm:inheritance2} are only two of the known network  modifications that preserve nondegenerate multistationarity \cite{BanPant18}; see also \cite{feliu2013simplifying, cappelletti2020addition}. Recent work of Banaji and others showed that these same network modifications also preserve oscillatory behavior \cite{BANAJI2018191, Banaji_2020, Banaji_20220, banaji221}.  

\subsection{Reaction networks, embedded graphs, and arrow diagrams}

For the purpose of this paper, it is useful to introduce the following definition, even though it comes at the price of a slight abuse of terminology.
\begin{defn}{\em (Reaction network).}\label{def:rn}
A \em reaction network $\cal N$ on $n$ species is a finite set of {\em reactions}
$$a_k\to a'_k,\ k\in\{1,\ldots, r\}$$ 
where $a_k\in {\mathbb Z}_{\ge 0}^n$, $a'_k\in {\mathbb Z}^n$ and $a_k\neq a'_k$ for all $k\in\{1,\ldots, r\}$,
\end{defn}

With terminology already introduced, $a_1,\ldots, a_k$ are called {\em source complexes} of $\cal N$; the $n\times r$ matrix with columns $a'_1-a_1,\ldots, a'_r-a_r$ is called the {\em stoichiometric matrix} of the network; $\text{im }\Gamma$ is called its {\em stoichiometric subspace}; and if $\eta\in{\mathbb R}_{>0}^n$ then $(\eta+\text{im }\Gamma)\cap{\mathbb R}^n_{\ge 0}$, and $(\eta+\text{im }\Gamma)\cap{\mathbb R}^n_{>0}$ are called the {\em compatibility class}, respectively {\em positive compatibility class of $\eta$.}

Each reaction $a_k\to a'_k$ is associated a {\em rate constant} $\kappa_k>0$. Letting 
$$r(x)=(\kappa_1x^{a_1},\ldots, \kappa_rx^{a_1})$$
the ODE system (\ref{eq:genKin}) models the {\em mass action system} $({\cal N},\kappa)$. Positive equilibrium points, degeneracy/nondegeneracy, and the capacity for MPE or MPNE are defined as in section \ref{sec:multistat}. 

\begin{defn}{\em (Subnetwork)} A subnetwork of a reaction network $\cal N$ is a reaction network given by a nonempty subset of the reactions of $\cal N$.
\end{defn}

\begin{rmk}
Definition \ref{def:rn} extends the usual notion of reaction network allowing product complexes to have negative coordinates. Some dynamical properties of mass action systems change under the new definition;  for instance the positive orthant may not be forward invariant, e.g.  reaction $0\to -1$ on one species. However, on questions related to positive equilibria there is no drawback to this extension. A motivating feature  of  Definition \ref{def:rn} is that replacing all reaction vectors of $\cal N$ by their negatives  yields a new reaction network $-\cal N$, and $(-\cal N,\kappa)$ has the same (nondegenerate/degenerate) positive equilibria as $({\cal N,\kappa})$. This allows us to reduce the number of cases in some of the proofs.
\end{rmk}

\begin{rmk}
Our proof can be easily extended to allow source complexes and reaction vectors to have {\em real} entries, i.e. the vector field is a {\em signomial} vector function reaction. Reaction rates in this case are called {\em power law} \cite{savageau1988introduction}, and are used to model a variety of biological phenomena including genetic circuits and developmental systems \cite{savageau1974comparison, savageau1979allometric}.
A number of results on various dynamical aspects of mass-action kinetics (like multistationarity \cite{BanPant16}, persistence and global stability \cite{Craciun.2013ab, Crac19}) apply more generally for power-law kinetics.
\end{rmk}

A reaction network $\cal N$ can be depicted naturally as arrows between points of ${\mathbb Z}_{\ge 0}^n$, see Figure \ref{fig:egsd}$(a)$ for an example. Following \cite{Crac19} we call this the {\em embedded graph of $\cal N$}. Note that the embedded graph of a reaction network $\cal N$ identifies $\cal N$ uniquely up to permutation of species indices. 

\footnote{This is not true in the original definition of an euclidean embedded graph \cite{Crac19}; in that work the embedded graph is defined starting from an ODE system rather than a network, like we do here.}

The {\em arrow diagram} of $\cal N$ is obtained by replacing all arrows from $a_k$ to $a'_k$ in the embedded graph by arrows starting at $a_k$ in the direction of $sign(a'_k-a_k)\in\{-1,0,1\}^n$. Vectors from the same source complex going in the same direction are drawn as one. The length of arrows in the arrow diagram is not important; see Figure \ref{fig:egsd}$(b)$.

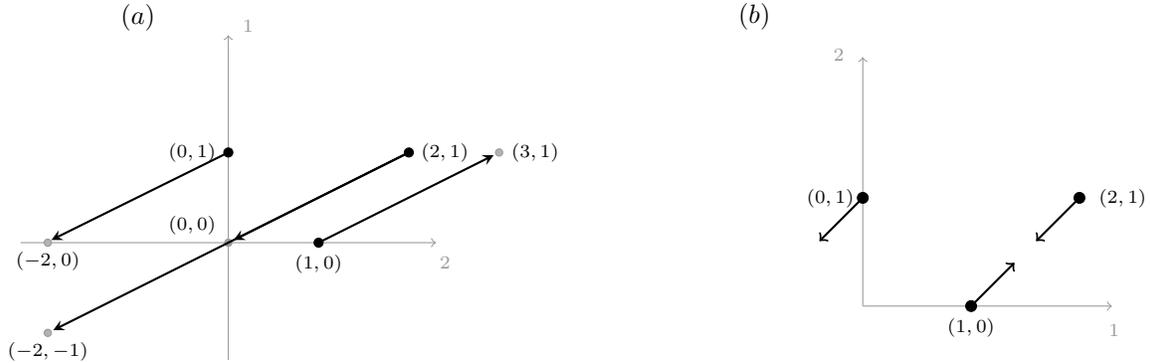
\begin{figure}[h]
\centering
\begin{tikzpicture}[scale=1.2]
\begin{scope}
\draw [->, color=black!40](-2.3,0)--(2.3,0);
\draw [->, color=black!40](0,-1.3)--(0,2.3);
\draw[color=black!40] (.22,2.4) node {\scriptsize $1$};
\draw[color=black!40] (2.4,-.22) node {\scriptsize $2$};
\draw[fill=black] (0,1) circle[ radius=0.05];
\draw[fill=black] (2,1) circle[ radius=0.05];
\draw[fill=black] (1,0) circle[ radius=0.05];
\draw[color=black!40, fill=black!40] (0,0) circle[ radius=0.04];
\draw[color=black!40,, fill=black!30] (-2,0) circle[ radius=0.04];
\draw[color=black!40,, fill=black!30] (3,1) circle[ radius=0.04];
\draw[color=black!40,, fill=black!30] (-2,-1) circle[ radius=0.04];
\draw [-stealth,thick](0,1)--(-1.96,.03);
\draw [-stealth,thick](2,1)--(.06,.03);
\draw [-stealth,thick](2,1)--(-1.94,-.97);
\draw [-stealth,thick](1,0)--(2.94,.97);
\draw (-.4,1) node {\scriptsize $(0,1)$};
\draw (-.4,.2) node {\scriptsize $(0,0)$};
\draw (-2,-.2) node {\scriptsize $(-2,0)$};
\draw (-2,-1.2) node {\scriptsize $(-2,-1)$};
\draw (2.4,1) node {\scriptsize $(2,1)$};
\draw (3.4,1) node {\scriptsize $(3,1)$};
\draw (1,-.24) node {\scriptsize $(1,0)$};
\draw (-1,2.5) node {$(a)$};
\end{scope}
\begin{scope}[xshift=200, yshift=-20, scale=1.2]
\draw [->, color=black!40](0,0)--(2.3,0);
\draw [->, color=black!40](0,0)--(0,2.3);
\draw[color=black!40] (2.32,-.22) node {\scriptsize $1$};
\draw[color=black!40] (-.22,2.32) node {\scriptsize $2$};
\draw[fill=black] (0,1) circle[ radius=0.05];
\draw[fill=black] (2,1) circle[ radius=0.05];
\draw[fill=black] (1,0) circle[ radius=0.05];
\draw [->,thick](0,1)--(-.4,0.6);
\draw [->,thick](2,1)--(1.6,0.6);
\draw [->,thick](1,0)--(1.4,0.4);
\draw (-.3,1) node {\scriptsize $(0,1)$};
\draw (2.4,1) node {\scriptsize $(2,1)$};
\draw (1,-.2) node {\scriptsize $(1,0)$};
\draw (-1,2.68) node {$(b)$};
\end{scope}
\end{tikzpicture}
\caption{$(a)$ Embedded graph and $(b)$ arrow diagram of reaction network $\{(0,1)\to (-2,0),\ (2,1)\to (0,0),\ (2,1)\to (-2,-1),\ (1,0)\to (3,1)$)\}.}\label{fig:egsd}
\end{figure}
\subsection{Network projections}\label{sec:proj}
\begin{defn}{\em (Network projection)}\footnote{Projections of networks are called {\em embedded networks} by other authors \cite{JoshiShiu17}.}\label{def:proj}
Let $\cal N$ be a reaction network and let $I$ be a nonempty subset of $\{1,\ldots,n\}$. The {\em projection of $\cal N$ on $I$}, denoted by ${\cal N}_I$ is the reaction network on species in $I$ with reactions $p_I(a_k)\to p_I(a'_k)$ where $p_I$ denotes the orthogonal projection on coordinates ${\mathbb Z}^I$. If $p_I(a_k)=p_I(a'_k)$ then that reaction is discarded from ${\cal N}_I$. Duplicate reactions are also discarded from ${\cal N}_I$.  
\end{defn}

In other words, ${\cal N}_I$ is obtained by removing coordinates $i\notin I$ from all complexes in $\cal N$ and then discarding any trivial reactions where the source complex and the product complex is the same. We note that for rank 1 reaction networks no projection of a reaction can be trivial if $\Gamma$ contains only nonzero rows. Moreover, while two different reactions in $\cal N$ may have the same projection in ${\cal N}_I$, we list each reaction in ${\cal N}_I$ only once. The embedded graph of ${\cal N}_I$ is the orthogonal projection on coordinates $I$ of the embedded graph of $\cal N$, discarding trivial reactions.  

If $|I|=k$ we call  ${\cal N}_I$ a {\em kD projection} of $\cal N$. Arrow diagrams of 2D projections can be depicted in two ways on a 2D cartesian grid, depending on which 1D projection is drawn horizontally (Figure \ref{fig:proj}(i),(ii)) Two 2D arrow diagrams that are reflections of each other about the diagonal of the positive orthant represent the same network, up to permuting coordinates.

\begin{ex}\label{ex:proj}
Let
$${\cal N}=\{2X_1+X_3\to X_1+X_2+3X_3,\;X_1+X_2\to 2X_2+2X_3,\;X_2+2X_3\to X_1\}.
$$

Dropping $X$ from notation the 2D projections of $\cal N$ on $\{1,2\}$ and $\{3\}$ are
\begin{eqnarray*}
{\cal N}_{\{1,2\}}&=&\{
(2,0)\to (1,1),\;
(1,1)\to (0,2),\;
(0,1) \to (1,0)\}\\
{\cal N}_{\{3\}}&=&\{
1\to 3,\;
0\to 2,\;
2 \to 0\}
\end{eqnarray*}

Figure \ref{fig:proj} illustrates arrow diagrams of some 2D and 1D projections of $\cal N$.
\end{ex}
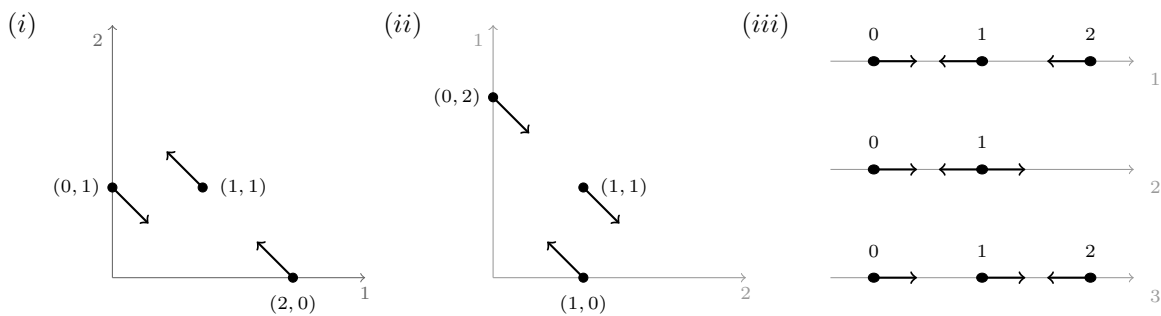
\begin{figure}[h]
\centering
\begin{tikzpicture}[scale=1.2]
\begin{scope}
\draw [->, color=black!60](0,0)--(2.8,0) node[anchor= north]{\scriptsize $1$};
\draw [->, color=black!60](0,0)--(0,2.8)
node[anchor=north east]{\scriptsize $2$};
\draw[fill=black] (2,0) circle[ radius=0.05];
\draw[fill=black] (1,1) circle[ radius=0.05];
\draw[fill=black] (0,1) circle[ radius=0.05];
\draw [->, thick](2,0)--(1.6,.4);
\draw [->, thick](1,1)--(.6,1.4);
\draw [->, thick](0,1)--(0.4,.6);
\draw (-.4,1) node {\scriptsize $(0,1)$};
\draw (1.45,1) node {\scriptsize $(1,1)$};
\draw (2,-.3) node {\scriptsize $(2,0)$};
\draw (-1,2.8) node {$(i)$};
\end{scope}
\begin{scope}[xshift=120]
\draw [->, color=black!40](0,0)--(2.8,0) node[anchor=north]{\scriptsize $2$};
\draw [->, color=black!40](0,0)--(0,2.8)
node[anchor=north east]{\scriptsize $1$};
\draw[fill=black] (0,2) circle[ radius=0.05];
\draw[fill=black] (1,1) circle[ radius=0.05];
\draw[fill=black] (1,0) circle[ radius=0.05];
\draw [->, thick](0,2)--(.4,1.6);
\draw [->, thick](1,1)--(1.4,.6);
\draw [->, thick](1,0)--(.6,0.4);
\draw (-.4,2) node {\scriptsize $(0,2)$};
\draw (1.45,1) node {\scriptsize $(1,1)$};
\draw (1,-.3) node {\scriptsize $(1,0)$};
\draw (-1,2.8) node {$(ii)$};
\end{scope}
\begin{scope}[xshift=240, xscale=1.2]
\draw [->, color=black!40](-.4,2.4)--(2.4,2.4);
\draw [->, color=black!40](-.4,1.2)--(2.4,1.2);
\draw [->, color=black!40](-.4,0)--(2.4,0);
\draw[color=black!40] (2.6,2.2) node {\scriptsize $1$};
\draw[color=black!40] (2.6,1) node {\scriptsize $2$};
\draw[color=black!40] (2.6,-.2) node {\scriptsize $3$};
\draw (0,2.7) node {\scriptsize 0};
\draw (1,2.7) node {\scriptsize 1};
\draw (2,2.7) node {\scriptsize 2};
\draw (0,1.5) node {\scriptsize 0};
\draw (1,1.5) node {\scriptsize 1};
\draw (0,.3) node {\scriptsize 0};
\draw (1,.3) node {\scriptsize 1};
\draw (2,.3) node {\scriptsize 2};
\draw[fill=black] (0,0) circle[ radius=0.05];
\draw[fill=black] (1,0) circle[ radius=0.05];
\draw[fill=black] (2,0) circle[ radius=0.05];
\draw[fill=black] (0,1.2) circle[ radius=0.05];
\draw[fill=black] (1,1.2) circle[ radius=0.05];
\draw[fill=black] (0,2.4) circle[ radius=0.05];
\draw[fill=black] (1,2.4) circle[ radius=0.05];
\draw[fill=black] (2,2.4) circle[ radius=0.05];
\draw [->, thick](0,2.4)--(.4,2.4);
\draw [->, thick](1,2.4)--(.6,2.4);
\draw [->, thick](2,2.4)--(1.6,2.4);
\draw [->, thick](0,1.2)--(.4,1.2);
\draw [->, thick](1,1.2)--(.6,1.2);
\draw [->, thick](1,1.2)--(1.4,1.2);
\draw [->, thick](0,0)--(.4,0);
\draw [->, thick](1,0)--(1.4,0);
\draw [->, thick](2,0)--(1.6,0);
\draw (-1,2.8) node {$(iii)$};
\end{scope}
\end{tikzpicture}
\caption{Arrow diagrams of projections of $\cal N$ in example \ref{ex:proj}. $(i),(ii)$: both figures represent ${\cal N}_{\{1,2\}}$; the two pictures are reflections of each other about the diagonal. $(c)$: 1D projections ${\cal N}_{\{1\}}$, ${\cal N}_{\{2\}}$, ${\cal N}_{\{3\}}$.}
\label{fig:proj}
\end{figure}

\section{Rank 1 reaction networks}\label{sec:1Drn}
For rank 1 networks $\cal N$, the inheritance theorems \ref{thm:inheritance} and \ref{thm:inheritance2} state simply that
{\em
\begin{itemize}
\item[(1)] if a subnetwork of $\cal N$ has the capacity for MPNE, then so does ${\cal N}$, and
\item[(2)] if a projection of $\cal N$ has the capacity for MPNE, then so does ${\cal N}$\footnote{(1) and (2) are equivalent to saying that if an {\em embedded subnetwork} \cite{JoshiShiu13} of $\cal N$ has the capacity for MPNE, then so does $\cal N$.}.

\end{itemize}
}
 These two key facts allow us to piece together structural requirements for degeneracy from (subnetworks of) smaller dimensional projections. Facts (1) and (2) are uniquely important in this work. To avoid invoking them excessevily, we will without further reference use the fact that 
 \smallskip 
 \begin{center}
 {\em if a subnetwork of a projection of $\cal N$ has the capacity for MPNE, then so does $\cal N$.}
 \end{center}

\subsection{Preliminaries}\label{sec:notation}
Let $\cal N$ be a rank 1 reaction network. If a source complex of $\cal N$ is involved in two or more reactions in the same direction then they can be merged into a single reaction. The sum of their rate constants is assigned as a rate constant to the new reaction. In this way the two mass-action reaction systems follow the same set of ODEs and in particular steady states and their nondegeneracy/degeneracy stay the same.\footnote{This construction is a particular case of {\em dynamically equivalent} mass action networks \cite{Craciun.2008aa}.} This allows us to assume that {\em each source complex is associated with either one reaction, or with two reactions of opposite directions.}

Next we set notation which will be used throughout the paper. In some proofs we override this common notation to simplify presentation.

\medskip

\noindent {\bf Notation.}
\begin{itemize}
\item[1] $\cal N$ denotes a rank 1 reaction network on $n\ge 1$ species with $m\ge 1$ source complexes. 

\item[2] The index sets for species and source complexes are denoted by $S$ and $R$ respectively: 
$$S=\{1,\ldots, n\} \text{ and } R=\{1,\ldots, m\}.$$
\item[3] $v\in {\mathbb R}^n$ spans the stoichiometric subspace of ${\cal N}$. The sign pattern of $v$ is encoded in the sets 
$$S^+=\{i\in S|v_i>0\},\quad S^-=\{i\in S|v_i<0\}.$$
By Proposition \ref{prop:nonzerov} we may assume that {\em $v$ does not have zero coordinates.} Note however that the statement of our main theorem is independent of this assumption. 
\item[4.1] If $a_k$ is source complex for only one reaction, that reaction is denoted by
$a_k \to a_k^{\prime},$
and we let $a_k^{\prime}-a_{k}={\lambda}_k v.$ 
The rate constant of $a_k \to a_k^{\prime}$ is denoted by $\kappa_k$.

\item[4.2] If $a_k$ is the source complex of two reactions, these are  denoted by
$a_k \to a_k^{\prime +},\ a_k \to a_k^{\prime -}$
and we let
$a_k^{\prime +}-a_k=\lambda_k^+v,\ a_k^{\prime -}-a_k=\lambda_k^-v$
where $\lambda_k^+>0$ and $\lambda_k^-<0.$
The rate constant of $a_k \to a_k^{\prime +}$ is denoted by $\kappa_k^+$ and that of reaction $a_k \to a_k^{\prime -}$ by $\kappa_k^-$.

We say that two reactions {\em have the same direction} if their corresponding $\lambda$ values have the same sign, and that they {\em have opposite directions} otherwise.

\item[4.3] The following sets indicate the source complexes that have reactions in each of the two directions:
\begin{eqnarray*}
&R^+&=\{k\in R| a_k\text{ is source complex for two reactions or } \lambda_k>0\}\\
&R^-&=\{k\in R| a_k\text{ is source complex for two reactions or } \lambda_k<0\}\\
&R^0&=\{k\in R| a_k\text{ is source complex for two reactions}\}=R^+\cap R^-.
\end{eqnarray*}
%
\item[5] The mass-action ODE system for $(\cal N,\kappa)$ is given by
$$\frac{d x}{d \tau}=F(x), \text{ where }F(x)=\left(\sum_{k\in R\setminus R^0}\lambda_k \kappa_kx(\tau)^{a_k}+\sum_{k\in R^0}(\lambda_k^+\kappa_k^++\lambda_k^-\kappa_k^-) x(\tau)^{a_k}\right)v.$$
We use $\tau$ to denote the time variable. Variable $t$ is used as follows:

\item[6] The positive stoichiometric class of $\eta\in {\mathbb R}_{>0}^n$ is parameterized by  $\eta+tv$. The bounds for $t$ are computed from $\eta_i+tv_i>0$ for all $i\in S$, i.e.
$t\in (\alpha^\eta,\beta^\eta)=(-\min_{i\in S^+} {\eta_i}/{v_i}, -\max_{i\in S^-}{\eta_i}/{v_i})$. 
\item[7] We let 
$$f(\eta,\kappa;t)=\sum_{k\in R\setminus R^0}\lambda_k\kappa_k(\eta+tv)^{a_k}
+\sum_{k\in R^0}(\lambda_k^+\kappa_k^++\lambda_k^-\kappa_k^-)(\eta+tv)^{a_k}.
$$  
$f$ is as a function of $t$ that depends on the parameters $\eta, \kappa$. The domain of $f(\eta,\kappa;t)$ is $(\alpha^\eta,\beta^\eta)$.
\end{itemize}

\medskip

\noindent {\bf Positive equilibria and $f(\eta,\kappa;t)$.} Clearly, $\eta+t_0v\in{\mathbb R}^n_{>0}$ is a steady state of $(\cal N,\kappa)$ if and only if
\begin{equation}\label{eq:sst_0}
f(\eta,\kappa;t_0)=0
\end{equation}
Moreover, we have 
\begin{prop}\label{prop:DSSf}
A steady state $\eta+t_0v$ of the mass action system $({\cal N},\kappa)$ is degenerate if and only if $f'(\eta,\kappa;t_0)=0$.
\end{prop}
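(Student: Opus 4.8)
The plan is to translate the nondegeneracy condition \eqref{eq:degen} into a statement about the one-variable function $f(\eta,\kappa;t)$ by exploiting the rank-1 structure. The key observation is that for a rank-1 network the stoichiometric subspace is $\operatorname{span}(v)$, so condition \eqref{eq:degen} at a steady state $x^*=\eta+t_0v$ reduces to asking whether the single vector $v$ lies in $\ker(\Gamma Dr(x^*))$: indeed $\ker(\Gamma Dr(x^*))\cap\operatorname{im}\Gamma=\{0\}$ fails precisely when $v\in\ker(\Gamma Dr(x^*))$, since every nonzero element of $\operatorname{im}\Gamma$ is a scalar multiple of $v$. So the whole statement is equivalent to: $x^*$ is degenerate $\iff \Gamma Dr(x^*)\,v=0$.

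Next I would compute $\Gamma Dr(x)\,v$ directly from the mass-action ODE. From item 5 of the Notation, the vector field is $F(x)=g(x)\,v$ where $g(x)=\sum_{k\in R\setminus R^0}\lambda_k\kappa_k x^{a_k}+\sum_{k\in R^0}(\lambda_k^+\kappa_k^++\lambda_k^-\kappa_k^-)x^{a_k}$, and $\Gamma Dr(x)$ is exactly the Jacobian $DF(x)$. Then $DF(x)=v\,(\nabla g(x))^t$, a rank-one matrix, so $DF(x)\,v=(\nabla g(x)\cdot v)\,v$. Hence $x^*$ is degenerate iff $\nabla g(x^*)\cdot v=0$. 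Finally, I would identify $\nabla g(x^*)\cdot v$ with $f'(\eta,\kappa;t_0)$: by the chain rule $\frac{d}{dt}\bigl[g(\eta+tv)\bigr]=\nabla g(\eta+tv)\cdot v$, and since $f(\eta,\kappa;t)=g(\eta+tv)$ by definition (item 7), we get $f'(\eta,\kappa;t_0)=\nabla g(x^*)\cdot v$. Combining the three equivalences gives the claim.

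There is one subtlety worth stating carefully: the Jacobian $\Gamma Dr(x)$ should be evaluated as $DF(x)$, i.e. one must be sure that differentiating the rate vector and multiplying by $\Gamma$ gives the same thing as differentiating $F$ componentwise — this is immediate since $F=\Gamma r$ and $\Gamma$ is constant. I would also note explicitly that \emph{at} the steady state we have $g(x^*)=f(\eta,\kappa;t_0)=0$, though this is not actually needed for the derivative computation; the rank-one structure $DF=v(\nabla g)^t$ holds everywhere, not just at equilibria.

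The main obstacle is essentially bookkeeping rather than conceptual: one must be careful that the reduced Jacobian, which in general (per the discussion after \eqref{eq:degen}) is the sum of $k\times k$ principal minors of $\Gamma Dr(x^*)$ with $k=\operatorname{rank}\Gamma$, here specializes correctly to $k=1$, namely to the trace of the rank-one matrix $v(\nabla g(x^*))^t$, which is $\nabla g(x^*)\cdot v$. Verifying that this trace-of-a-rank-one-matrix equals $f'(\eta,\kappa;t_0)$ is the crux, and it is a short chain-rule computation. Alternatively, one can bypass the principal-minors characterization entirely and argue directly from \eqref{eq:degen} as in the first paragraph; I would present that route since it is cleanest for the rank-1 case.
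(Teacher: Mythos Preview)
Your proposal is correct and follows essentially the same approach as the paper: the paper's proof is just a compressed version of yours, writing $f(\eta,\kappa;t)v=F(\eta+tv)$, differentiating in $t$ to get $f'(\eta,\kappa;t)v=DF|_{\eta+tv}v$, and concluding that $f'(\eta,\kappa;t_0)=0$ iff $v\in\ker(DF|_{\eta+t_0v})$. Your extra unpacking (writing $DF=v(\nabla g)^t$ and noting the trace/principal-minor interpretation) is fine but not needed.
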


\begin{proof}
We have $f(\eta,\kappa;t)v=F(\eta+tv)$ and $f'(\eta,\kappa;t)v=DF|_{\eta+tv} v$. Then $f'(\eta,\kappa;t_0)=0$ if and only if $v\in\ker(DF|_{\eta+t_0v}).$
\end{proof}

This yields the following useful observation.

\begin{cor}\label{cor:infeq}
If a rank 1 reaction network has an infinite number of positive equilibria within the same compatibility class, then these are all degenerate. 
\end{cor}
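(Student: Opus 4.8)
The plan is to translate the statement into one about the auxiliary function $f(\eta,\kappa;t)$ from item~7 of the Notation and then exploit the fact that $f$ is a polynomial in $t$. First I would fix a positive compatibility class, say that of some $\eta\in\mathbb{R}^n_{>0}$, parameterized by $t\mapsto\eta+tv$ for $t\in(\alpha^\eta,\beta^\eta)$. By \eqref{eq:sst_0}, the positive equilibria of $(\mathcal{N},\kappa)$ in this class are exactly the points $\eta+t_0v$ with $f(\eta,\kappa;t_0)=0$. Hence the hypothesis ``infinitely many positive equilibria within the same compatibility class'' says precisely that $f(\eta,\kappa;\cdot)$ has infinitely many zeros in the interval $(\alpha^\eta,\beta^\eta)$.

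Next I would observe that $f(\eta,\kappa;t)$ is a polynomial in $t$: by its definition it is a finite linear combination of terms $(\eta+tv)^{a_k}=\prod_{i\in S}(\eta_i+tv_i)^{a_{k,i}}$, and each such term is a polynomial in $t$ because $a_k\in\mathbb{Z}^n_{\ge 0}$ has nonnegative integer entries (so no negative powers and no poles appear). A nonzero univariate polynomial has only finitely many roots, so $f(\eta,\kappa;\cdot)$ must vanish identically on $(\alpha^\eta,\beta^\eta)$. Consequently $f'(\eta,\kappa;\cdot)\equiv 0$ as well, so in particular $f'(\eta,\kappa;t_0)=0$ at every zero $t_0$ of $f$. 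By Proposition~\ref{prop:DSSf}, every equilibrium $\eta+t_0v$ in this class is therefore degenerate, which is the claim.

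There is essentially no obstacle here beyond bookkeeping; the only point meriting (trivial) care is the claim that $f$ is genuinely a polynomial in $t$, which uses that source complexes have nonnegative integer coordinates. If one instead allowed the power-law generalization mentioned in the remarks, $f$ would be a signomial in the real variable $t$, i.e.\ a finite linear combination of maps $t\mapsto(\eta+tv)^{a_k}$ with real exponents; these are real-analytic on $(\alpha^\eta,\beta^\eta)$, and the identity theorem for real-analytic functions would replace the ``finitely many roots'' fact, so the conclusion would go through verbatim.
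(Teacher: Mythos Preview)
Your proof is correct and follows exactly the paper's approach: the paper's proof is the single sentence ``If the polynomial $f(\eta,\kappa;t)$ has an infinite number of roots it must be identically zero,'' and you have simply spelled out the details (including the appeal to Proposition~\ref{prop:DSSf}) that this sentence leaves implicit.
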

\begin{proof}
If the polynomial $f(\eta,\kappa;t)$ has an infinite number of roots it must be identically zero. 
\end{proof}

In some of proofs below $f$ is written as a product. The following lemma is useful in that case.

\begin{lem}\label{prop:DSSh}
Suppose $\eta+t_0v$ is a steady state of the reaction system $({\cal N},\kappa)$ and that $f(\eta,\kappa;t)=p(t)q(t)$ where $p(t_0)\ne0$. Then $\eta+t_0v$ is degenerate if and only if $q'(t_0)=0.$
\end{lem}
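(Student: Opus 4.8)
The plan is to reduce \cref{prop:DSSh} to \cref{prop:DSSf} by computing $f'(\eta,\kappa;t_0)$ via the product rule. Since $f(\eta,\kappa;t)=p(t)q(t)$, differentiating with respect to $t$ gives $f'(\eta,\kappa;t_0)=p'(t_0)q(t_0)+p(t_0)q'(t_0)$. By \cref{prop:DSSf}, the steady state $\eta+t_0v$ is degenerate if and only if $f'(\eta,\kappa;t_0)=0$, so I need to show that $p'(t_0)q(t_0)+p(t_0)q'(t_0)=0$ is equivalent to $q'(t_0)=0$.

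The key observation that makes this work is that $q(t_0)=0$. Indeed, since $\eta+t_0v$ is a steady state, \eqref{eq:sst_0} gives $f(\eta,\kappa;t_0)=p(t_0)q(t_0)=0$; combined with the hypothesis $p(t_0)\ne0$, this forces $q(t_0)=0$. Substituting $q(t_0)=0$ into the product-rule expression, the term $p'(t_0)q(t_0)$ vanishes, leaving $f'(\eta,\kappa;t_0)=p(t_0)q'(t_0)$. Since $p(t_0)\ne0$, we conclude $f'(\eta,\kappa;t_0)=0$ if and only if $q'(t_0)=0$. Applying \cref{prop:DSSf} once more, $\eta+t_0v$ is degenerate if and only if $q'(t_0)=0$, as desired.

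There is no real obstacle here — the argument is a two-line computation once one notices that being a steady state forces $q(t_0)=0$. The only thing worth being careful about is the implicit assumption that $p$ and $q$ are differentiable at $t_0$ (which is automatic if, as in the intended applications, $p$ and $q$ are the polynomial or monomial factors arising from factoring $f$), and that $t_0$ indeed lies in the domain $(\alpha^\eta,\beta^\eta)$ of $f$, which holds because $\eta+t_0v\in{\mathbb R}^n_{>0}$ by the definition of a positive steady state.
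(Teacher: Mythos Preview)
Your proof is correct and follows exactly the same approach as the paper's: apply the product rule, use $q(t_0)=0$ (forced by $p(t_0)\ne0$ and $f(\eta,\kappa;t_0)=0$) to kill the $p'(t_0)q(t_0)$ term, and invoke \cref{prop:DSSf}. If anything, you spell out the vanishing of $q(t_0)$ more explicitly than the paper does.
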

\begin{proof}
 $f'(\eta,\kappa;t_0)=p'(t_0)q(t_0)+p(t_0)q'(t_0)=p(t_0)q'(t_0)$, so $f'(\eta,\kappa;t_0)=0$ if and only if $q'(t_0)=0$.
\end{proof}

\medskip
\noindent {\bf The essential projection.} Let $\cal N$ denote a rank 1 reaction network on $n$ species, and let 
$$I=\{i\in S|\ {\cal N}_{\{i\}} \text{ has two or more source complexes and } v_{i}\neq 0\}.$$
Then ${\cal N}_I$ is called the {\em essential projection} of $\cal N$. If $I=S$ then we call $\cal N$ an {\em essential reaction network}.

In other words, the essential projection of $\cal N$ removes the coordinates corresponding to $v_i=0$ and those where all source complexes project to to the same point. The next simple calculation together with Proposition \ref{prop:nonzerov} show that the equilibria of the network obtained this way are the same and they have the same degenerate/nondegenerate type. Let $J=S\setminus I$. For each $j\in J$ we have $a_{kj}=a_{lj}:=\zeta_j$ for all $k\in R$. If  $x\in{\mathbb R}^S$ we let $x_I$ denote the projection of $x$ on $I$. 

\begin{prop}\label{prop:essential} Let $\cal N$ be a rank 1 reaction network and let ${\cal N}_I$ denote its essential projection. 
\begin{itemize} 
\item[(1)] if $\eta+tv$ is a (nondegenerate) positive equilibrium of $({\cal N},\kappa)$ then $\eta_I+t_0v_I$ is  a (nondegenerate) positive equilibrium  of $({\cal N}_I,\kappa)$.

\item[(2)] If $\eta'+t_0v_I$ is a (nondegenerate) positive steady state of $({\cal N}_I,\kappa)$ we choose for every $j\in J$ $\eta_j$ large enough so that $\eta_j+t_0v_j>0$ and we let $\eta=(\eta_I,\eta_J)\in {\mathbb Z}^S$. Then
$\eta+t_0v$ is a (nondegenerate) positive steady state of $({\cal N},\kappa)$.
\end{itemize}
\end{prop}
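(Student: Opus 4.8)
The plan is to express the polynomial $f(\eta,\kappa;t)$ of item 7 of the Notation as a product $g(t)\,f_I(\eta_I,\kappa;t)$, where $f_I(\,\cdot\,,\kappa;t)$ is the corresponding polynomial for the system $({\cal N}_I,\kappa)$ and $g$ is a factor supported on the coordinates in $J=S\setminus I$ that is positive on the relevant $t$-interval; both parts of the statement then follow from Proposition~\ref{prop:DSSf} and Lemma~\ref{prop:DSSh}.

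First I would fix ${\cal N}_I$ precisely. Working under the standing assumption that $v$ has no zero coordinates (item 3, legitimate by Proposition~\ref{prop:nonzerov}), $J$ is exactly the set of indices $j$ on which all source complexes agree, $a_{kj}=\zeta_j$ for every $k\in R$. Since $a_1,\dots,a_m$ are distinct and coincide on $J$, the projected source complexes $p_I(a_k)$ are still pairwise distinct, and since every reaction vector $\lambda_k v$ (resp.\ $\lambda_k^{\pm}v$) projects to $\lambda_k v_I$ (resp.\ $\lambda_k^{\pm}v_I$) with $v_I\ne0$, no projected reaction is trivial and none is merged with or duplicated by another. Hence ${\cal N}_I$ has the same $m$ source complexes, the same scalars $\lambda_k,\lambda_k^{\pm}$, the same rate constants $\kappa_k,\kappa_k^{\pm}$, and the same index set $R^0$ as ${\cal N}$, once $v_I$ is chosen to span its (one-dimensional) stoichiometric subspace; with these choices $f_I(\,\cdot\,,\kappa;t)$ is literally the item-7 polynomial attached to $({\cal N}_I,\kappa)$.

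The single computation is the factorization. For each $k\in R$ and each $\eta=(\eta_I,\eta_J)\in{\mathbb R}^S$,
$$(\eta+tv)^{a_k}=\Bigl(\prod_{i\in I}(\eta_i+tv_i)^{a_{ki}}\Bigr)\Bigl(\prod_{j\in J}(\eta_j+tv_j)^{\zeta_j}\Bigr)=(\eta_I+tv_I)^{p_I(a_k)}\,g(t),\qquad g(t):=\prod_{j\in J}(\eta_j+tv_j)^{\zeta_j},$$
and $g(t)$ is the same for all $k$. Substituting into the definition of $f$ and pulling $g(t)$ out of each term gives $f(\eta,\kappa;t)=g(t)\,f_I(\eta_I,\kappa;t)$ as polynomials in $t$. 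I would also note that whenever $\eta+t_0v\in{\mathbb R}^S_{>0}$ we have $g(t_0)>0$ (each $\eta_j+t_0v_j>0$, each $\zeta_j\ge0$) and $t_0$ lies in the domain of $f_I(\eta_I,\kappa;\cdot)$; and conversely, given $t_0$ with $\eta'+t_0v_I\in{\mathbb R}^I_{>0}$, taking every $\eta_j$ large makes $\eta=(\eta',\eta_J)$ satisfy $\eta+t_0v\in{\mathbb R}^S_{>0}$.

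Both parts then read off immediately. For (1): if $\eta+t_0v$ is a positive steady state of $({\cal N},\kappa)$ then $0=f(\eta,\kappa;t_0)=g(t_0)f_I(\eta_I,\kappa;t_0)$ with $g(t_0)>0$, so $f_I(\eta_I,\kappa;t_0)=0$ and $\eta_I+t_0v_I$ is a positive steady state of $({\cal N}_I,\kappa)$; applying Lemma~\ref{prop:DSSh} to $f=g\cdot f_I$ (with $g(t_0)\ne0$) and then Proposition~\ref{prop:DSSf} to $({\cal N}_I,\kappa)$ shows $\eta+t_0v$ is degenerate iff $f_I'(\eta_I,\kappa;t_0)=0$ iff $\eta_I+t_0v_I$ is degenerate, so nondegeneracy is preserved. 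Part (2) is the mirror image: with $\eta=(\eta',\eta_J)$ built as prescribed we have $\eta+t_0v\in{\mathbb R}^S_{>0}$ and $f(\eta,\kappa;t_0)=g(t_0)f_I(\eta',\kappa;t_0)=0$, so $\eta+t_0v$ is a positive steady state of $({\cal N},\kappa)$, and the same two results carry (non)degeneracy from $\eta'+t_0v_I$ to $\eta+t_0v$. I expect the only genuine obstacle to be the bookkeeping in the second paragraph---confirming that passing to the essential projection neither trivializes nor merges reactions and preserves every $\lambda$ and $\kappa$---after which everything rests on the one factorization above.
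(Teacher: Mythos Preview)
Your proposal is correct and follows essentially the same route as the paper: the key step in both is the factorization $f(\eta,\kappa;t)=\bigl(\prod_{j\in J}(\eta_j+tv_j)^{\zeta_j}\bigr)f_I(\eta_I,\kappa;t)$, after which the result follows from Lemma~\ref{prop:DSSh} (and Proposition~\ref{prop:DSSf}). Your write-up is in fact more careful than the paper's, which leaves the verification that no reactions are merged or trivialized under $p_I$ implicit.
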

\begin{proof}
1. We have
\begin{eqnarray*}
f(\eta,\kappa;t)&=&\prod_{j\in S\setminus I}(\eta_j+tv_j)^{\zeta_j}\left(
\sum_{k\in R\setminus R^0}\lambda_k\kappa_k(\eta_I+tv_I)^{(a_k)_I}
+\sum_{k\in R^0}(\lambda_k^+\kappa_k^+ +\lambda_k^-\kappa_k^-) (\eta_I+tv_I)^{{(a_k)}_I}\right)\\
&=& f(\eta_I,\kappa;t)
\end{eqnarray*}
and the conclusion follows from proposition \ref{prop:nonzerov} and lemma \ref{prop:DSSh}.

2. Similar to 1.
\end{proof}

\begin{cor}
A reaction network $\cal N$ has the capacity for MPE (MPNE) if and only if its essential projection ${\cal N}_I$ has the capacity for MPE (MPNE). 
\end{cor}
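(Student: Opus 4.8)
The plan is to read the corollary off directly from Proposition~\ref{prop:essential}, the only extra work being to track compatibility classes and the number of distinct equilibria through the projection. First I would invoke Corollary~\ref{cor:nonzerovMPNE} to assume, as throughout this section, that $v$ has no zero coordinate; then $I=\emptyset$ only if $\mathcal{N}$ has a single source complex, in which case $\mathcal{N}_I$ is not even defined (Definition~\ref{def:proj}), so I may also assume $I\neq\emptyset$, and in particular $v_I\neq 0$. I will use repeatedly that any two positive equilibria of a rank~$1$ network lying in a common compatibility class can be written $\eta+t_1v$ and $\eta+t_2v$ with $t_1\neq t_2$ (take $\eta$ to be one of them), together with the analogous statement for $\mathcal{N}_I$, whose stoichiometric subspace is spanned by $v_I$; I also use that no reaction of $\mathcal{N}$ becomes trivial or merges with another under the projection onto $I$, so that $(\mathcal{N}_I,\kappa)$ indeed refers to the same rate-constant vector.

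For the ``only if'' direction, assume $(\mathcal{N},\kappa)$ has two distinct positive equilibria $\eta+t_1v$ and $\eta+t_2v$ in one compatibility class, both nondegenerate in the MPNE case. Proposition~\ref{prop:essential}(1) yields that $\eta_I+t_1v_I$ and $\eta_I+t_2v_I$ are positive equilibria of $(\mathcal{N}_I,\kappa)$, nondegenerate in the MPNE case; they are distinct since $v_I\neq 0$, and their difference is a multiple of $v_I$, hence they lie in one compatibility class of $\mathcal{N}_I$. Thus $\mathcal{N}_I$ has the capacity for MPE (resp.\ MPNE). For the converse, let $(\mathcal{N}_I,\kappa)$ have two distinct compatible positive equilibria $\eta'+t_1v_I$ and $\eta'+t_2v_I$, $t_1\neq t_2$, both nondegenerate in the MPNE case. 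For each $j\in J:=S\setminus I$ I would choose a \emph{single} value $\eta_j$ with $\eta_j+t_iv_j>0$ for \emph{both} $i=1,2$ (any $\eta_j>\max\{-t_1v_j,-t_2v_j\}$ works) and set $\eta=(\eta',\eta_J)$. Applying Proposition~\ref{prop:essential}(2) to $t_1$ and to $t_2$ with this common $\eta$, we obtain positive equilibria $\eta+t_1v$ and $\eta+t_2v$ of $(\mathcal{N},\kappa)$, nondegenerate in the MPNE case; they are distinct because $v\neq 0$, and they are compatible, so $\mathcal{N}$ has the capacity for MPE (resp.\ MPNE).

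The only step that requires genuine care is the use of a \emph{common} $\eta_J$ in the converse: it is precisely this that forces the two lifted equilibria into a single compatibility class of $\mathcal{N}$ rather than merely being equilibria. Everything else --- including the harmless checks that the projected and lifted pairs stay distinct, which reduce to $v_I\neq 0$ and $v\neq 0$ respectively --- is mechanical. I therefore do not anticipate any real obstacle; the corollary is just a repackaging of Proposition~\ref{prop:essential} at the level of ``capacity for multistationarity'' statements, exactly parallel to how Corollary~\ref{cor:nonzerovMPNE} repackages Proposition~\ref{prop:nonzerov}.
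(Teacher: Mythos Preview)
Your proposal is correct and follows essentially the same route as the paper: both directions are read off from Proposition~\ref{prop:essential} (together with Corollary~\ref{cor:nonzerovMPNE} to dispose of zero coordinates of $v$), and the one nontrivial point---choosing a \emph{single} $\eta_J$ that works for both $t_1$ and $t_2$ so that the lifted equilibria remain compatible---is exactly what the paper does as well. Your write-up is simply more explicit about the bookkeeping (distinctness via $v_I\neq 0$, the degenerate case $I=\emptyset$, reactions not collapsing under projection) that the paper leaves implicit.
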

\begin{proof}
If $\cal N$ has the capacity for MPE (MPNE) then ${\cal N}_I$ does by corollary \ref{cor:nonzerovMPNE} and Proposition \ref{prop:essential}(1). If ${\cal N}_I$ has the capacity for MPE (MPNE) we let $\eta'+t_1v_I$, $\eta'+t_2v_I$ be distinct positive equilibria (nondegenerate equilibria) of ${\cal N}_I$ and for each $j\in J$ choose $\eta_j$ large enough so that $\eta_j+t_1v_j>0$, $\eta_j+t_2v_j>0$. Then 
$\eta+t_1v$, $\eta+t_2v$ distinct are positive equilibria (nondegenerate equilibria) of $({\cal N},\kappa)$ by corollary \ref{cor:nonzerovMPNE} and Proposition \ref{prop:essential}(2).
\end{proof}

\subsection{Special classes of networks}
Following \cite{JoshiShiu17} we have the following definitions.
\begin{defn}\label{defn:patterns}
\begin{itemize} 
\item[1.] We say that a reaction network $\cal N$ on one species {\em contains the $(\to, \gets)$ pattern} if it contains reactions $a\to a'$, $b\to b'$ with $a<a'$, $b'<b$ and $a<b$. ${\cal N}$ {\em has the $(\gets, \to)$ pattern} if it contains reactions $a\to a'$, $b\to b'$ with $a'<a$, $b'>b$ and $a<b$.

\item[2.] We say that a rank 1 reaction network $\cal N$ contains the $(\to, \gets)$ (or $(\gets,\to)$) pattern if there exists a 1D projection of $\cal N$ that has this pattern.
\end{itemize}
\end{defn}

In the same way as in Definition \ref{defn:patterns} arrow diagrams of 1D networks are represented by a a sequence of arrows $\to$, $\gets$ or bidirectional arrows $\leftarrow\!\!\bullet\!\!\rightarrow$ (reactions of opposite directions starting at the same source complex), for example $(\to, \leftarrow\!\!\bullet\!\!\rightarrow, \to, \gets)$. Commas separate reactions with distinct source complexes, and complexes increase to the right.

\begin{defn}{\em (Class {\it 1-}$alt^c$: 1-alt complete networks).} A rank 1 reaction network $\cal N$ is called {\em 1-alternating complete} if it contains both $(\to, \gets)$ and $(\gets, \to)$ patterns. The two 1D projections may be on the same coordinate. The class of 1-alternating complete networks is denoted by \ac. We write ${\cal N}\in$\ac and refer to $\cal N$ as {\em 1-alt complete}.
\end{defn}

The direction (increasing or decreasing) of the projection of $a_k\to a'_k$ on $\{i\}$ is given by the sign of $\lambda_kv_i$. 



\begin{defn}\label{def:2alt}
A rank 1 reaction network $\cal N$ that has a 1D projection containing both arrow patterns 
$$(\to,\gets,\to),\quad (\gets,\to,\gets)$$ on some 1D projection is called {\em 2-alternating}. We write ${\cal N}\in$\at.
\end{defn}

\begin{rmk}\label{rmk:1alt2alt}
Clearly, 2-alternating networks are 1-alt complete; moreover, it is easy to restricted to networks on one species \ac is the (disjoint) union of \at and the set of networks with arrow diagram $(\leftarrow\!\!\bullet\!\!\rightarrow, \leftarrow\!\!\bullet\!\!\rightarrow)$.
\end{rmk}
\begin{defn}\label{defn:zpattern}
Let ${\cal N}=\{a\to a', b\to b'\}$ be a rank 1 reaction network on two species. We say that ${\cal N}$ is a {\em zigzag} and if
\begin{itemize}
    \item[1.] $a\to a'$ and $b\to b'$ have opposite directions and
    \item[2.] $v_1v_2(b_1-a_1)(b_2-a_2)<0$.

The {\em slope} of zigzag ${\cal N}$ is defined as the slope of $b-a$, i.e. $\frac{b_2-a_2}{b_1-a_1}$. 
\end{itemize}
\end{defn}

Part 2 of Definition \ref{defn:zpattern} says that the slopes of $v$ and $b-a$ are nonzero and of different signs, in other words the  arrow diagram of ${\cal N}$ is one of those in Figure \ref{fig:zigzag}. The rectangles in the figure (``boxes" in terminology from \cite{JoshiShiu17}) are not degenerate to line segments.

\begin{figure}[h]
   \centering
\begin{tikzpicture}[scale=0.5]
\draw [color=black!60](0,2)--(3,0)
(6,2)--(9,0)
(12,0)--(15,2)
(18,0)--(21,2);
\draw [color=black!40] (0,0) -- (0,2) -- (3,2) -- (3,0) -- cycle;
\draw [color=black!40] (6,0) -- (6,2) -- (9,2) -- (9,0) -- cycle;
\draw [color=black!40]  (12,0) -- (12,2) -- (15,2) -- (15,0) -- cycle;
\draw [color=black!40] (18,0) -- (18,2) -- (21,2) -- (21,0) -- cycle;
\draw [->, thick](0,2)--(1,3);
\draw [->, thick](3,0)--(2,-1);
\draw [->, thick](6,2)--(5,1);
\draw [->, thick](9,0)--(10,1);
\draw [->, thick](12,0)--(11,1);
\draw [->, thick](15,2)--(16,1);
\draw [->, thick](18,0)--(19,-1);
\draw [->, thick](21,2)--(20,3);
\draw[ fill=black] (3,0)circle[radius=0.1];
\draw[ fill=black] (0,2)circle[radius=0.1];
\draw[ fill=black] (9,0)circle[radius=0.1];
\draw[ fill=black] (6,2)circle[radius=0.1];
\draw[ fill=black] (12,0)circle[radius=0.1];
\draw[ fill=black] (15,2)circle[radius=0.1];
\draw[ fill=black] (18,0)circle[radius=0.1];
\draw[ fill=black] (21,2)circle[radius=0.1];
\end{tikzpicture}
\caption{Arrow diagrams of zigzags.}
\label{fig:zigzag}
\end{figure}
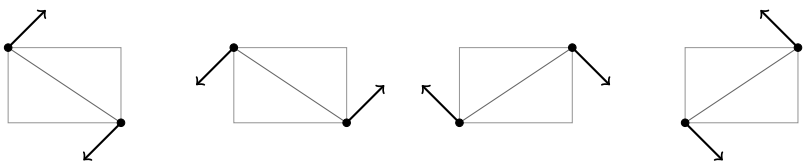


\begin{defn}\label{defn:znetwork} {\em (Class ${\cal Z}$)}.
A rank 1 reaction network is called a {\em zigzag network} if it has a 2D projection containing a zigzag. We write ${\cal N}\in {\cal Z}$.
\end{defn}

Zigzag networks are 1-alt complete, and in fact ``minimally" so: only two reactions are required to produce both $(\to,\gets)$ and $(\gets, \to)$ patterns. Other networks, like the one in Figure \ref{fig:egsd}, require three reactions to produce these patterns. Yet other networks require four reactions for this purpose, see proposition \ref{prop:s2nz}.

The following lemma will be useful later.

\begin{lem}\label{cor:31}
Let $\mathcal{N}=\{a\to a^{\prime}, b\to b'\}$ be 1-alt complete reaction network, and suppose ${\cal N}_{\{i,j\}}\in{\cal Z}$ is the only 2D projection of ${\cal N}$ that contains a zigzag. Then $a_s=b_s$ for all $s\in \{1,\ldots, n\}\setminus\{i,j\}$.
\end{lem}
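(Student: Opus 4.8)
The plan is to argue by contradiction. Suppose there is some index $s \notin \{i,j\}$ with $a_s \neq b_s$. We know $\mathcal{N}$ has only two reactions and both patterns $(\to,\gets)$ and $(\gets,\to)$ appear in 1D projections of $\mathcal{N}$. With only two reactions, the patterns $(\to,\gets)$ and $(\gets,\to)$ require the two reactions to have opposite directions (i.e. $\lambda_a$ and $\lambda_b$ have opposite signs), since with same-direction reactions a 1D projection looks like $(\to,\to)$ or $(\gets,\gets)$ (possibly with bidirectional arrows, but there are no bidirectional arrows here as each source has a single reaction). Also, in a 1D projection $\mathcal{N}_{\{k\}}$ to exhibit one of these patterns we need $v_k \neq 0$ and the two source complexes to project to distinct points, i.e. $a_k \neq b_k$; and — crucially — to have the $(\to,\gets)$ pattern the arrows must point toward each other, and to have the $(\gets,\to)$ pattern they must point away from each other. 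The direction of the projection of $a \to a'$ on $\{k\}$ is the sign of $\lambda_a v_k$, and similarly for $b \to b'$.

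First I would fix the coordinate, say $p$, on which $\mathcal{N}_{\{p\}}$ has the $(\to,\gets)$ pattern, and the coordinate $q$ on which $\mathcal{N}_{\{q\}}$ has the $(\gets,\to)$ pattern; these could coincide. I would extract from these two pattern conditions the sign relations among $v_p, v_q, b_p - a_p, b_q - a_q$ (and the common sign relation that $\lambda_a \lambda_b < 0$). The key computation is: the pair of reactions $\{a\to a', b\to b'\}$ restricted to coordinates $\{p,q\}$ — or, if $p = q$, to $\{p, s\}$ using the assumed index $s$ with $a_s \neq b_s$ — forms a 2D projection, and I claim the sign conditions force this 2D projection to be a zigzag in the sense of Definition~\ref{defn:zpattern}: condition~1 (opposite directions) holds because $\lambda_a \lambda_b < 0$, and condition~2, $v_r v_t (b_r - a_r)(b_t - a_t) < 0$, is exactly the statement that the slopes of $v$ and $b - a$ restricted to those two coordinates have opposite signs. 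Reading off the pattern conditions at $p$ and at $q$ should give precisely this sign opposition. If $p \neq q$ this already produces a zigzag on $\{p,q\}$; if $p = q$, then I would use the coordinate $s$ (where $a_s \neq b_s$) together with $p$: the $(\to,\gets)$ and $(\gets,\to)$ patterns both live at coordinate $p$, meaning the arrows at $p$ simultaneously point toward each other and away from each other, which forces a bidirectional configuration — but since each source has only one reaction this is impossible unless... (here I would need to recheck; more likely the intent is that when $p = q$ the single coordinate $p$ cannot carry both patterns with two opposite-direction reactions, so $p \neq q$ is automatic, or a zigzag on $\{p,s\}$ arises directly).

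So the heart of the argument is: the existence of both patterns forces a zigzag on the 2D projection $\mathcal{N}_{\{p,q\}}$ (with $p \neq q$). By hypothesis $\mathcal{N}_{\{i,j\}}$ is the \emph{only} 2D projection of $\mathcal{N}$ containing a zigzag, hence $\{p,q\} = \{i,j\}$. Now comes the use of the offending index $s \notin \{i,j\}$: I would show that replacing $j$ by $s$ (keeping $i = p$, say) still yields a zigzag. Indeed, on coordinate $i$ the arrow directions and the relation $a_i \neq b_i$ are unchanged, and on coordinate $s$ we have $a_s \neq b_s$ by assumption and $v_s \neq 0$ (our standing assumption that $v$ has no zero coordinates, stated in item~3 of the Notation; and the statement of the lemma is independent of that assumption so we may invoke it). The direction of each reaction's projection on $\{s\}$ is the sign of $\lambda v_s$, which is determined, so the 2D projection $\mathcal{N}_{\{i,s\}}$ has two opposite-direction reactions (condition~1 of zigzag holds), and I need condition~2: $v_i v_s (b_i - a_i)(b_s - a_s) < 0$. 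If this sign happens to be $<0$, then $\mathcal{N}_{\{i,s\}}$ is a second 2D projection containing a zigzag, contradicting uniqueness. If instead the sign is $>0$, I would pair $s$ with $j$ instead of with $i$: since $\{i,j\}$ is a zigzag we have $v_i v_j (b_i-a_i)(b_j-a_j) < 0$, so exactly one of $v_i v_s (b_i-a_i)(b_s-a_s)$ and $v_j v_s (b_j-a_j)(b_s-a_s)$ is negative (their product is $v_i v_j v_s^2 (b_i-a_i)(b_j-a_j)(b_s-a_s)^2 < 0$, using $v_s \neq 0$ and $b_s \neq a_s$). Hence one of $\mathcal{N}_{\{i,s\}}$, $\mathcal{N}_{\{j,s\}}$ is a zigzag distinct from $\mathcal{N}_{\{i,j\}}$, contradicting the hypothesis. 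Therefore no such $s$ exists, i.e. $a_s = b_s$ for all $s \notin \{i,j\}$.

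\textbf{Main obstacle.} The delicate point is the bookkeeping in the first step — showing rigorously that the coexistence of the $(\to,\gets)$ and $(\gets,\to)$ patterns in \emph{some} 1D projections, for a two-reaction network, forces a genuine (non-degenerate-box) zigzag on a 2D projection, and pinning down that the two relevant coordinates $p,q$ are distinct (handling the $p=q$ case cleanly). Once a zigzag 2D projection is secured and identified with $\{i,j\}$, the rest is the clean parity/sign argument above: the product of the two candidate sign expressions at $\{i,s\}$ and $\{j,s\}$ is negative, so one of them gives a second zigzag, contradicting uniqueness. I expect the sign-chasing to be routine but requires care with the orientation conventions (sign of $\lambda_k v_i$ for arrow directions, and Definition~\ref{defn:zpattern}'s condition~2).
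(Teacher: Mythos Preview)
Your core argument --- the sign parity computation in the second half --- is correct and is exactly the paper's proof: from the zigzag condition $v_iv_j(b_i-a_i)(b_j-a_j)<0$ and the assumption $a_s\neq b_s$, $v_s\neq 0$, the product of $v_iv_s(b_i-a_i)(b_s-a_s)$ and $v_jv_s(b_j-a_j)(b_s-a_s)$ is negative, so one of them is negative, giving a second zigzag on $\{i,s\}$ or $\{j,s\}$.

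However, the entire first half of your plan is unnecessary. The hypothesis already tells you that $\mathcal{N}_{\{i,j\}}$ is a zigzag; you do not need to locate coordinates $p,q$ carrying the two arrow patterns, argue that $p\neq q$, or identify $\{p,q\}$ with $\{i,j\}$. Both conditions in Definition~\ref{defn:zpattern} --- opposite directions and the sign inequality --- are handed to you directly by the hypothesis ${\cal N}_{\{i,j\}}\in\mathcal{Z}$, and opposite directions is a property of the two reactions themselves, independent of which coordinates you project on. So the ``main obstacle'' you flag is not an obstacle: the paper's proof is two lines, consisting only of your parity argument.
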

\begin{proof}
We have $v_iv_j(a_i-b_i)(a_j-b_j)<0$. Assume that there exists $s\notin\{i,j\}$ such that $a_s\neq b_s$. Then  $v_iv_s(a_i-b_i)(a_s-b_s)<0$ or $v_jv_s(a_j-b_j)(a_s-b_s)<0$ and therefore ${\cal N}_{\{i,s\}}\in {\cal Z}$ or ${\cal N}_{\{j,s\}}\in {\cal Z}$, which is a contradiction. 
\end{proof}


\begin{exs}\label{ex:altz}
\text{ }

{\em 1.} The reaction network in Figure \ref{fig:egsd} contains the $(\to, \gets)$ pattern in both its 1D projection, and contains the $(\gets,\to)$ pattern in its projection on $\{1\}$ . This network is 2-alternating.

\smallskip
{\em 2.} The reaction network $\cal N$ in Example \ref{ex:proj}
contains the $(\to,\gets)$ pattern on all of its 1D projections, but it does not contain the $(\gets,\to)$ pattern. This network is not 1-alt complete.

\smallskip
{\em 3.} Let 
${\cal N}=\{(0,1,0)\to(1,2,-1),\ (1,0,1)\to(0,-1,2)\}$. This network contains the $(\to,\gets)$ pattern on its $\{1\}$ projection, and the $(\gets,\to)$ on its $\{2\}$ and $\{3\}$ projections. $\cal N$ is 1-alt complete. Moreover, $\cal N$ is a zigzag network.  ${\cal N}_{\{1,2\}}$ is a zigzag of slope -1 and ${\cal N}_{\{1,3\}}$ is a zigzag of slope 1. 
\end{exs}

\begin{figure}[h]
\centering
\begin{tikzpicture}[scale=1]
\begin{scope}
\draw [->, color=black!40](0,0)--(2.8,0) node[anchor= north]{\scriptsize $1$};
\draw [->, color=black!40](0,0)--(0,2.8)
node[anchor=north east]{\scriptsize $2$};
\draw[fill=black] (1.5,0) circle[ radius=0.05];
\draw[fill=black] (0,1.5) circle[ radius=0.05];
\draw [->, thick](1.5,0)--(1.1,-.4);
\draw [->, thick](0,1.5)--(.4,1.9);
\draw (-.4,1.5) node {\scriptsize $(0,1)$};
\draw (1.6,-.3) node {\scriptsize $(1,0)$};
\draw (0,1.5)--(1.5,0);
\draw (-1,2.8) node {$(i)$};
\end{scope}
\begin{scope}[xshift=240]
\draw [->, color=black!40](0,0)--(2.8,0) node[anchor=north]{\scriptsize $1$};
\draw [->, color=black!40](0,0)--(0,2.8)
node[anchor=north east]{\scriptsize $3$};
\draw[fill=black] (1.5,1.5) circle[ radius=0.05];
\draw[fill=black] (0,0) circle[ radius=0.05];
\draw [->, thick](1.5,1.5)--(1.1,1.9);
\draw [->, thick](0,0)--(.4,-0.4);
\draw (-.4,0) node {\scriptsize $(0,0)$};
\draw (1.9,1.5) node {\scriptsize $(1,1)$};
\draw (0,0)--(1.5,1.5);
\draw (-1,2.8) node {$(ii)$};
\end{scope}
\end{tikzpicture}
\caption{
Arrow diagrams of 2D projections of $\cal N$ in Example \ref{ex:altz}:
${\cal N}_{\{1,2\}}$ is a zigzag of slope -1, 
${\cal N}_{\{1,3\}}\in {\cal Z}$.}

\label{fig:altz}
\end{figure}
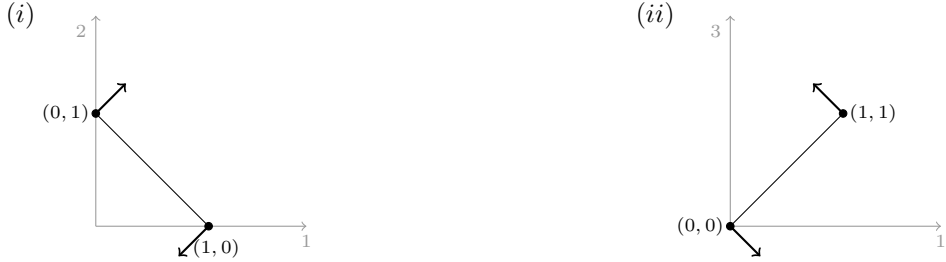

\medskip

Next we define four classes of reaction networks with that are central in the classification of  multistationarity for rank 1 reaction networks:  our main result shows that these classes encompass all networks with capacity for MPE, but no capacity for MPNE.  

\begin{defn}{\em (Class ${\cal S}_1)$: one-source networks).}
A rank 1 reaction network $\cal N$ is called an {\em ${\cal S}_1$ network} if $\cal N$ contains exactly one source complex and two reactions of opposite directions. We write ${\cal N}\in{\cal S}_1$.
\end{defn}

\begin{defn}{\em (Class ${\cal S}_2^z$: two-source zigzag networks).}
A rank 1 reaction network $\cal N$ is called an {\em ${\cal S}_2^z$ network} if it has two species, contains exactly two source complexes $a,b$ and has a zigzag of slope -1. 
\end{defn}

Networks of class ${\cal S}_2^z$ have the arrow diagram  in Figure \ref{fig:s2z} up to reflections about the diagonal (i.e permuting coordinates) and up to reversing directions of all arrows (i.e. replacing $\cal N$ by -$\cal N$). Any ${\cal S}_2^z$ network contains the arrow diagram in Figure \ref{fig:s2z}(iii).

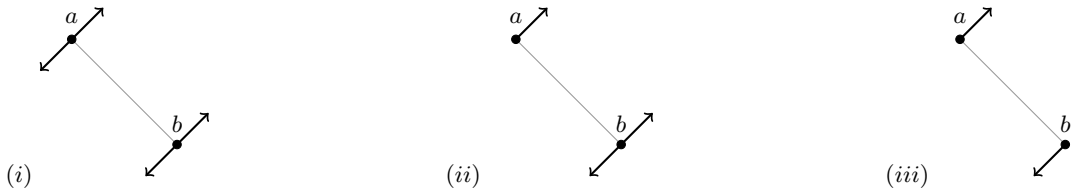
\begin{figure}[h]
\centering
\begin{tikzpicture}[scale=1.4]
\begin{scope}
\draw[color=black!40] (1,2)--(2,1);
\draw [<->,thick](.7,1.7)--(1.3,2.3);
\draw [<->, thick](1.7,.7)--(2.3,1.3);
\draw[fill=black] (1,2) circle[ radius=0.04];
\draw[fill=black] (2,1) circle[ radius=0.04];
\draw (1,2.2) node {\small $a$};
\draw (2,1.2) node {\small $b$};
\draw (.5,.7) node {\small $(i)$};
\end{scope}
\begin{scope}[xshift=120]
\draw[color=black!40] (1,2)--(2,1);
\draw [->,thick](1,2)--(1.3,2.3);
\draw [<->, thick](1.7,.7)--(2.3,1.3);
\draw[fill=black] (2,1) circle[ radius=0.04];
\draw[fill=black] (1,2) circle[ radius=0.04];
\draw (1,2.2) node {\small $a$};
\draw (2,1.2) node {\small $b$};
\draw (.5,.7) node {\small $(ii)$};
\end{scope}
\begin{scope}[xshift=240]
\draw[color=black!40] (1,2)--(2,1);
\draw [->,thick](1,2)--(1.3,2.3);
\draw [->, thick](2,1)--(1.7,.7);
\draw[fill=black] (1,2) circle[ radius=0.04];
\draw[fill=black] (2,1) circle[ radius=0.04];
\draw (1,2.2) node {\small $a$};
\draw (2,1.2) node {\small $b$};
\draw (.5,.7) node {\small $(iii)$};
\end{scope}
\end{tikzpicture}
\caption{Class ${\cal S}_2^z$: networks on two species with two source complexes whose arrow diagrams contain zigzags of slope -1. The diagrams were drawn up to reflection about the diagonal axis (i.e. permuting two species indices) and up to reversing all arrows (replacing $\cal N$ with $-{\cal N}$).}\label{fig:s2z}
\end{figure}

\begin{defn}{\em (Class $\cal L$: line networks}\label{defn:class1})
A rank 1 reaction network $\cal N$ is said to be {\em an $\cal L$ network} if it has two species, at least three source complexes, and
there exists $\delta\in\mathbb{R}_{>0}^2$ such that

\begin{equation}\label{eq:Lcases}(a_{k1}, a_{k2})=\delta+p_k(1,-1) \text{ for all } k\in R 
\text{ or } (a_{k1}, a_{k2})=\delta+p_k(-1,1) \text{ for all } k\in R
\end{equation}
where $p_k$ are pairwise distinct, $p_k<0$ for all $k\in R^+\setminus R^0$, 
$p_k=0$ for $k\in R^0$, and
$p_l>0$ for all $l\in R^-\setminus R^0$.
We write ${\cal N}\in {\cal L}.$
\end{defn}

The two statements in (\ref{eq:Lcases}) are the same after permuting coordinates. Up to this permutation, $\cal L$ networks have arrow diagrams in Figure \ref{fig:Lnew}{\it(i),(ii)}. Their source complexes lie on a line of slope -1 and their reactions are separated according to their direction. For networks of class $\cal L$, $R^0$ contains zero or one elements; if there exists a source complex $a_k$ with reactions in both directions, then $a_k=\delta$. 

\begin{rmk}\label{rmk:L}
${\cal L}$ networks are those with all source complexes on a line of slope -1, with at least three source complexes, 1-alt complete, but not 2-alternating. This easy observation will be useful later on.
\end{rmk}

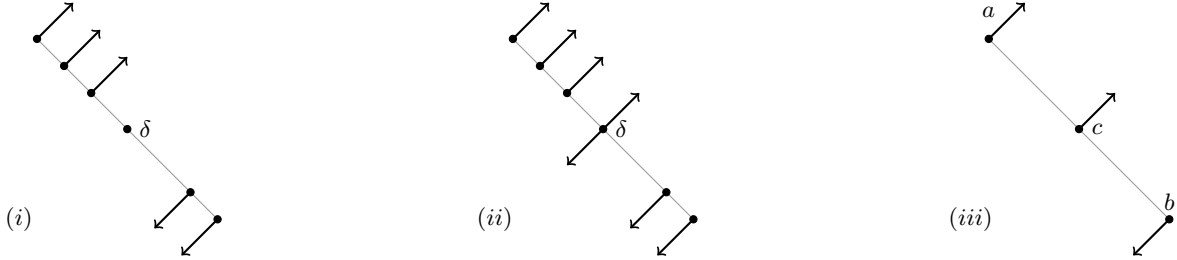
\begin{figure}[h]
\centering
\begin{tikzpicture}[scale=1.2]
\begin{scope}
\draw[color=black!40] (1,3)--(3,1);
\draw [->,thick](1,3)--(1.4,3.4);
\draw [->,thick](1.3,2.7)--(1.7,3.1);
\draw [->,thick](1.6,2.4)--(2,2.8);
\draw [->, thick](3,1)--(2.6,.6);
\draw [->, thick](2.7,1.3)--(2.3,.9);
\draw[fill=black] (1,3) circle[radius=0.04];
\draw[fill=black] (2,2) circle[radius=0.04];
\draw[fill=black] (1.3,2.7) circle[radius=0.04];
\draw[fill=black] (1.6,2.4) circle[radius=0.04];
\draw[fill=black] (3,1) circle[radius=0.04];
\draw[fill=black] (2.7,1.3) circle[radius=0.04];
\draw (.8,1) node {\small $(i)$};
\draw (2.2,2) node {\small $\delta$};
\end{scope}
\begin{scope}[xshift=150]
\draw[color=black!40] (1,3)--(3,1);
\draw [->,thick](1,3)--(1.4,3.4);
\draw [->,thick](1.3,2.7)--(1.7,3.1);
\draw [->,thick](1.6,2.4)--(2,2.8);
\draw [->, thick](3,1)--(2.6,.6);
\draw [<->, thick](1.6,1.6)--(2.4,2.4);
\draw [->, thick](2.7,1.3)--(2.3,.9);
\draw[fill=black] (1,3) circle[radius=0.04];
\draw[fill=black] (1.3,2.7) circle[radius=0.04];
\draw[fill=black] (1.6,2.4) circle[radius=0.04];
\draw[fill=black] (3,1) circle[radius=0.04];
\draw[fill=black] (2.7,1.3) circle[radius=0.04];
\draw[fill=black] (2,2) circle[radius=0.04];
\draw (.8,1) node {\small $(ii)$};
\draw (2.2,2) node {\small $\delta$};
\end{scope}
\begin{scope}[xshift=300]
\draw[color=black!40] (1,3)--(3,1);
\draw [->,thick](1,3)--(1.4,3.4);
\draw [->, thick](3,1)--(2.6,.6);
\draw [->, thick](2,2)--(2.4,2.4);
\draw[fill=black] (1,3) circle[radius=0.04];
\draw[fill=black] (3,1) circle[radius=0.04];
\draw[fill=black] (2,2) circle[radius=0.04];
\draw (1,3.3) node {\small $a$};
\draw (3,1.2) node {\small $b$};
\draw (2.2,2) node {\small $c$};
\draw (.8,1) node {\small $(iii)$};
\end{scope}
\end{tikzpicture}
\caption{Class ${\cal L}.$ {\it(i),(ii)}: arrow diagrams of 2D projections, up to reflection about the diagonal axis. {\it (iii):} any network in $\cal L$ contains a subnetwork $\{a\to a', b\to b', c\to c'\}$ with the given arrow diagram, up to reflection about the diagonal or reversing all arrows.}\label{fig:Lnew}
\end{figure}

\smallskip

\begin{defn}{\em (Class ${\cal S}_2^{nz}$: two-source non-zigzag networks).}
A rank 1 reaction network $\cal N$ is said to be {\em a ${\cal S}_2^{nz}$ network} if it is essential, contains exactly two source complexes and ${\cal N}\in$\ac$\setminus {\cal Z}$. 
We write ${\cal N}\in {\cal S}_2^{nz}.$
\end{defn}

If ${\cal N}\in{\cal S}_2^{nz}$ then $\cal N\notin Z$ and for any $i,j\in S$ we have
$v_iv_j(b_i-a_j)(b_j-a_j)\ge 0$. Since ${\cal N}$ is essential, this inequality is strict. Therefore ${\cal S}_2^{nz}$ consist of all essential networks with exactly two  source complexes $a$ and $b$, each having  reactions in both directions, such that
that $b-a$ has the same sign pattern as $v$ or $-v$:
$$b-a\in{\cal Q}(v) \text{ or } b-a\in{\cal Q}(-v)$$
where for vectors $v$ with nonzero coordinates $Q(v)=\{u\in {\mathbb R}^n| u_i v_i>0\}$. 
1D projections of ${\cal S}_2^{nz}$ networks have the arrow diagram $(\leftarrow\!\!\bullet\!\!\rightarrow, \leftarrow\!\!\bullet\!\!\rightarrow)$. Arrow diagrams of 2D projections of ${\cal S}_2^{nz}$ networks are given in Figure \ref{fig:S2nz}, up to permuting $i$ and $j$. Note that all $k$D projections of ${\cal S}_2^{nz}$ networks are ${\cal S}_2^{nz}$ networks themselves for $k\ge 2$.  

\begin{figure}[h]
\centering
\begin{tikzpicture}[scale=0.8]
\begin{scope}
    \draw [->, thick](3,4)--(3.5,4.5) ;
    \draw [->, thick](3,4)--(2.5,3.5) ;
    \draw[ color=black!50] (1,1)--(1,4)--(3,4)--(3,1)--(1,1);
    \draw [->, thick](1,1)--(1.5,1.5) ;
    \draw [->, thick](1,1)--(0.5,0.5) ;
    \draw[fill=black] (1,1) circle[ radius=0.06];
    \draw[fill=black] (3,4) circle[ radius=0.06];
    \draw (0.5,0.3) node {\scriptsize $(c)$};
    \draw (1.5,1.7) node {\scriptsize $(d)$};
    \draw (3.5,4.7) node {\scriptsize $(a)$};
    \draw (2.5,3.3) node {\scriptsize $(b)$};
\end{scope}
\begin{scope}[xshift=350]
    \draw [->, thick](1,4)--(1.5,3.5) ;
    \draw [->, thick](1,4)--(0.5,4.5) ;
    \draw [->, thick](3,1)--(3.5,0.5) ;
    \draw [->, thick](3,1)--(2.5,1.5) ;
    \draw[fill=black] (3,1) circle[ radius=0.06];
    \draw[fill=black] (1,4) circle[ radius=0.06];
    \draw[ color=black!50]  (1,1)--(1,4)--(3,4)--(3,1)--(1,1);
    \draw (3.5,0.3) node {\scriptsize $(c)$};
    \draw (2.5,1.7) node {\scriptsize $(d)$};
    \draw (1.5,3.3) node {\scriptsize $(b)$};
    \draw (0.5,4.7) node {\scriptsize $(a)$};
\end{scope}    

\end{tikzpicture}
\caption{Class ${\cal S}_2^{nz}$: four reactions $a\to a', b\to b', c\to c', d\to d'$;   2D projections have one of the two arrow diagrams in the figure, up to reflection about the diagonal. Reactions are labelled with their source complex.}\label{fig:S2nz}

\end{figure}
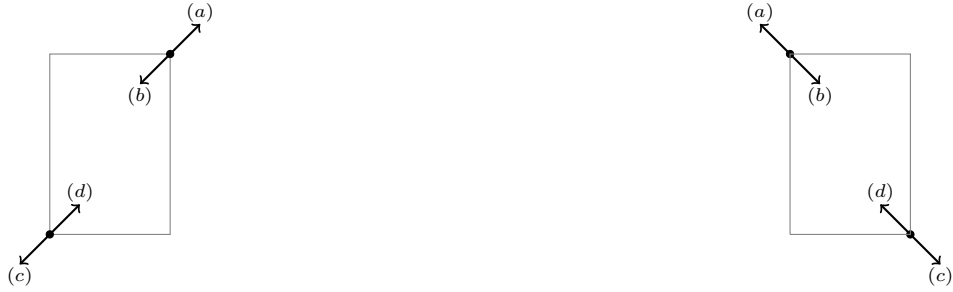

The following is a useful alternative characterization of ${\cal S}_2^{nz}$ networks:
\begin{prop}\label{prop:s2nz}
Let ${\cal N}$ be 1-alt complete. Then ${\cal N}\in{\cal S}_2^{nz}$ if and only if 
no subnetwork of $\cal N$ composed of three reactions is 1-alt complete. 
\end{prop}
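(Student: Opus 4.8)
The plan is to prove the two implications separately: the forward one by a direct arrow-diagram inspection, and the converse by a sign-pattern argument that leverages the failure of every three-reaction subnetwork to be 1-alt complete. For the forward implication, suppose $\mathcal{N}\in\mathcal{S}_2^{nz}$. Then $\mathcal{N}$ has exactly four reactions: two out of a source complex $a$ and two out of a source complex $b$, with $b-a\in\mathcal{Q}(v)\cup\mathcal{Q}(-v)$; after possibly replacing $\mathcal{N}$ by $-\mathcal{N}$ we may assume $b-a\in\mathcal{Q}(v)$. A three-reaction subnetwork $\mathcal{M}$ retains both reactions out of one of $a,b$ and a single reaction $c\to c'$, with $c'-c=\mu v$, out of the other. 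Fix a coordinate $i\in S$; then $\mathrm{sign}(b_i-a_i)=\mathrm{sign}(v_i)$, while the arrow of $c\to c'$ on coordinate $i$ points in direction $\mathrm{sign}(\mu v_i)$. Running through the four resulting 1D arrow diagrams --- $(\leftarrow\!\!\bullet\!\!\rightarrow,\to)$, $(\leftarrow\!\!\bullet\!\!\rightarrow,\gets)$, $(\to,\leftarrow\!\!\bullet\!\!\rightarrow)$, $(\gets,\leftarrow\!\!\bullet\!\!\rightarrow)$ --- one checks that $\mathcal{M}_{\{i\}}$ contains only the pattern $(\gets,\to)$ when $\mu>0$ and only $(\to,\gets)$ when $\mu<0$. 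The point is that which pattern occurs depends only on $\mathrm{sign}(\mu)$, not on $i$, so $\mathcal{M}$ never contains both patterns and is not 1-alt complete.

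For the converse, suppose $\mathcal{N}$ is 1-alt complete while no three-reaction subnetwork is. One first observes that $\mathcal{N}$ has at least four reactions: a 1-alt complete network with exactly three reactions is a forbidden subnetwork of itself, so we may assume $\mathcal{N}$ has more than three reactions. Pick 1D projections realizing $(\to,\gets)$ and $(\gets,\to)$, witnessed respectively by reactions $r_1,r_2$ and $r_3,r_4$. If $\{r_1,r_2,r_3,r_4\}$ has at most three distinct members, then a sub-collection of at most three reactions already contains both patterns, hence is 1-alt complete; enlarging it (if need be) to three reactions by adjoining another reaction of $\mathcal{N}$ contradicts the hypothesis, so $r_1,\dots,r_4$ are distinct. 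The heart of the argument is then to apply the hypothesis to the triples $\{r_1,r_2,r_3\},\{r_1,r_2,r_4\}$ (which cannot acquire $(\gets,\to)$ on \emph{any} coordinate) and $\{r_1,r_3,r_4\},\{r_2,r_3,r_4\}$ (which cannot acquire $(\to,\gets)$ on any coordinate), together with the fact that no 1D projection of $\mathcal{N}$ is 2-alternating (Remark~\ref{rmk:1alt2alt}) and with Lemma~\ref{cor:31}, in order to conclude: (i) only two distinct source complexes $a,b$ occur among $r_1,\dots,r_4$; (ii) each of $a,b$ carries exactly one reaction in each direction; (iii) $\mathcal{N}$ has no further reactions (a fifth reaction would complete a 1-alt complete triple with two of the $r_i$); and (iv) $b-a\in\mathcal{Q}(v)\cup\mathcal{Q}(-v)$ and $\mathcal{N}$ is essential. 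By definition (i)--(iv) say that $\mathcal{N}\in\mathcal{S}_2^{nz}$.

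The work is entirely in the converse, and specifically in the structural claims (i)--(iii): showing that four pattern-witnessing reactions, no three of which are jointly 1-alt complete, are forced into a two-source, everywhere-bidirectional $\mathcal{S}_2^{nz}$ configuration with nothing else attached. I expect to proceed coordinatewise: fix the coordinates $i,j$ where the two patterns surface, and use that each three-element sub-collection fails to gain its missing pattern on \emph{every} coordinate to force $a_s=b_s$ off a small set of coordinates (the mechanism behind Lemma~\ref{cor:31}), along with matching sign constraints on the products $\lambda_k v_s$. Controlling the case split --- in particular the a priori possibilities for how the four witnessing reactions share source complexes, and the exclusion of a fifth reaction --- is where the care lies; the observation that every $k$D projection ($k\ge 2$) of an $\mathcal{S}_2^{nz}$ network is again $\mathcal{S}_2^{nz}$, and Remark~\ref{rmk:L}, should help keep the bookkeeping manageable.
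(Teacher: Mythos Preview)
Your plan follows essentially the same route as the paper: both identify four pattern-witnessing reactions and exploit the failure of each triple to be 1-alt complete in order to force the $\mathcal{S}_2^{nz}$ structure. The paper organizes the converse a bit differently --- it first shows every 1D projection of the four-reaction subnetwork has arrow diagram $(\leftarrow\!\!\bullet\!\!\rightarrow,\leftarrow\!\!\bullet\!\!\rightarrow)$, then argues no third source complex can appear on any 1D projection (else $\mathcal{N}\in$\at), then shows that sources agreeing on one coordinate must agree everywhere, and finally checks the non-zigzag sign condition --- but the content matches your steps (i)--(iv).

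One step in your plan will not go through as written: you cannot \emph{derive} essentiality in (iv). Take $v=(1,1,1)$ with sources $a=(0,0,0)$ and $b=(1,1,0)$, each carrying reactions in both directions. This network is 1-alt complete, no three of its four reactions form a 1-alt complete subnetwork, yet it is not essential (species $3$ has a single source complex), hence not in $\mathcal{S}_2^{nz}$. The paper's opening move is to pass to the essential projection, observing that both sides of the equivalence are unchanged; you should do the same rather than attempt to prove essentiality from scratch. With that reduction in place your coordinatewise plan goes through, and the mechanism you cite behind Lemma~\ref{cor:31} is exactly what the paper uses. Your parenthetical justification of (iii) is also a bit glib: the paper needs two steps here (first, no third source on any 1D projection; second, sources with equal projection on one coordinate are equal), not simply ``a fifth reaction completes a triple with two of the $r_i$''.
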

\begin{proof} The statement is equivalent if we replace $\cal N$ by its essential projection, so we may assume that $\cal N$ is an essential network.
Clearly, if ${\cal N}\in{\cal S}_2^{nz}$ then $\cal N$ does not have \ac subnetworks of three reactions. Conversely, suppose  $\{a\to a', b\to b', c\to c', d\to d'\}$ is a subnetwork of $\cal N$ that is in \ac, that $a\to a', c\to c'$  form the $(\gets,\to)$ pattern on ${\cal N}_{\{i\}}$ with  $c_i<a_i$ and $b\to b', d\to d'$  form the $(\to,\gets)$ pattern on ${\cal N}_{\{j\}}$ with  $d_j<b_j$. If $i=j$ it is easy to check that either $\{a\to a', b\to b', c\to c', d\to d'\}\in$\at (a contradiction: \ac networks have \ac subnetworks composed of three reactions) or that ${\cal N}_{\{i\}}$ has the arrow diagram $(\leftarrow\!\!\bullet\!\!\rightarrow,\leftarrow\!\!\bullet\!\!\rightarrow)$. 
Suppose $i\neq j$, and suppose $v_i=v_j=1$ (the case $v_i=-v_j=1$ is similar). If $a_i>b_i$ then $\{a\to a', b\to b', d\to d'\}\in$\ac, contradiction. If $a_i<b_i$ then ${\cal N}_{\{i\}}\in$\at, contradiction. Therefore $a_i=b_i$. Likewise, $c_i=d_i$. 

Therefore all 1D projections of $\{a\to a', b\to b', c\to c', d\to d'\}$ have the arrow diagram $(\leftarrow\!\!\bullet\!\!\rightarrow,\leftarrow\!\!\bullet\!\!\rightarrow)$. 
$\cal N$ cannot more source complexes on any 1D projection, as that would mean ${\cal N}\in$\at. 
Suppose for two reactions $e\to e'$, $f\to f'$ we have $e_1=f_1$. We show that $e=f$. Suppose $e_k\neq f_k$ for some $k\in S$. If $e\to e'$ and $f\to f'$ have opposite  directions, then they form one of the $(\to,\gets)$ and $(\gets, \to)$ patterns in ${\cal N}_{\{k\}}$. We can pick a reaction $g\to g'$ with $g_1\neq f_1$ such that  $\{e\to e', f\to f', g\to g'\}\in$\ac, contradiction, so $e_k=f_k$. Since $e\to e'$ and $f\to f'$ were arbitrary, it follows that all source complexes that have the same projection on $\{1\}$ are equal. Therefore ${\cal N}$ contains exactly two source complexes $a=b$, $c=d$ and ${\cal N}=\{a\to a', b\to b', c\to c', d\to d'\}$. 
If for some $i,j\in S$ $v_iv_j(c_i-a_i)(c_j-a_j)<0$ then $\{a\to a', c\to c'\}$ is a zigzag in ${\cal N}_{\{i,j\}}$, contradiction. Therefore ${\cal N}\in{\cal S}_2^{nz}$.
\end{proof}

\smallskip
\begin{defn}{\em (Class $\cal C$: corner networks})\label{defn:class2}
A rank 1 reaction network $\cal N$ with is called {a $\cal C$ network} if it is essential, 1-alt complete, it contains at least three source complexes, and there exists $\gamma\in {\mathbb Z}^n$ such that 
$$a_k-\gamma\in\{0,e_1,\ldots,e_n\} \text{ for all } k\in R.$$ 
$\gamma$ is called the {\em corner of $\cal N$}. We write ${\cal N}\in {\cal C}.$
\end{defn}
Networks of class $\cal C$ are 1-alt complete with location of source complexes illustrated in Figure \ref{fig:C2D}{\it(iii)}; $\gamma$ may or may not be a source complex. Up to reversing arrows and reflection about the diagonal there are two possible arrow diagrams for 2D projections of $\cal C$ networks, shown in Figure \ref{fig:C2D}{\it (i),(ii)}. All 1-alt complete projections ($\ge2 D$) of $\cal C$ networks are $\cal C$ networks. However, in general not all $\ge 2D$ projections of $\cal N$ are in $\cal C$. 
One may check if a given network $\cal N$ is in class $\cal C$ by computing the vector 
\begin{equation}\label{eq:computeCorner}
\gamma=(\min_{k\in R}a_{k2}, \min_{k\in R}a_{km},\ldots,\min_{k\in R}a_{km}),
\end{equation}
for if ${\cal N}\in {\cal C}$ then necessarily $\gamma$ is the corner of $\cal N$.
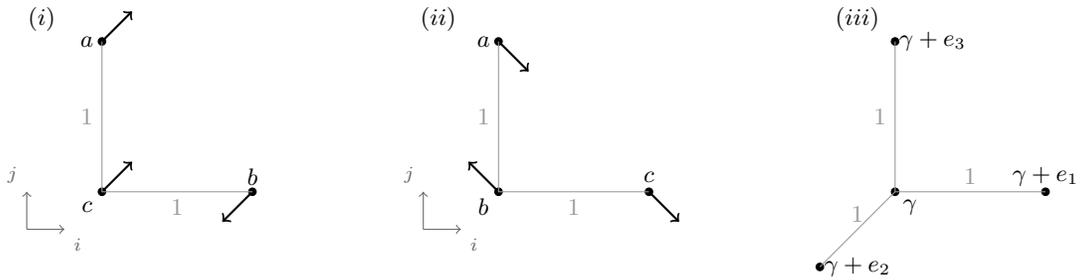
\begin{figure}[h]
\centering
\begin{tikzpicture}[scale=1]
\begin{scope}
\draw [->, color=black!60](-1,-.5)--(-.5,-.5) node[anchor=north west]{\scriptsize $i$};
\draw [->, color=black!60](-1,-.5)--(-1,0) node[anchor=south east]{\scriptsize $j$};
\draw[fill=black] (2,0) circle[ radius=0.05];
\draw[fill=black] (0,2) circle[ radius=0.05];
\draw[fill=black] (0,0) circle[ radius=0.05];
\draw [->,  thick](0,2)--(.4,2.4);
\draw [->,  thick](2,0)--(1.6,-.4) ;
\draw [->,  thick](0,0)--(.4,.4);
\draw [color=black!40] (2,0)--(0,0)--(0,2);
\draw[color=black!40] (1,-.2) node {\small $1$}; 
\draw[color=black!40] (-.2,1) node {\small $1$}; 
\draw (-.2,2) node {\small $a$};
\draw (2,.2) node {\small $b$};
\draw (-.2,-.2) node {\small $c$};
\draw (-.8,2.3) node {\small $(i)$};
\end{scope}
\begin{scope}[xshift=150]
\draw [->, color=black!60](-1,-.5)--(-.5,-.5) node[anchor=north west]{\scriptsize $i$};
\draw [->, color=black!60](-1,-.5)--(-1,0) node[anchor=south east]{\scriptsize $j$};
\draw[fill=black] (2,0) circle[ radius=0.05];
\draw[fill=black] (0,2) circle[ radius=0.05];
\draw[fill=black] (0,0) circle[ radius=0.05];
\draw [->,  thick](0,2)--(.4,1.6);
\draw [->,  thick](2,0)--(2.4,-.4) ;
\draw [->,  thick](0,0)--(-.4,.4);
\draw [color=black!40] (2,0)--(0,0)--(0,2);
\draw[color=black!40] (1,-.2) node {\small $1$}; 
\draw[color=black!40] (-.2,1) node {\small $1$}; 
\draw (-.2,2) node {\small $a$};
\draw (2,.2) node {\small $c$};
\draw (-.2,-.2) node {\small $b$};
\draw (-.8,2.3) node {\small $(ii)$};
\end{scope}
\begin{scope}[xshift=300]
\draw[fill=black] (2,0) circle[ radius=0.05];
\draw[fill=black] (0,2) circle[ radius=0.05];
\draw[fill=black] (0,0) circle[ radius=0.05];
\draw[fill=black] (-1,-1) circle[ radius=0.05];

\draw [color=black!40] (-1,-1)--(0,0)--(0,2);
\draw [color=black!40] (0,0)--(2,0);
\draw[color=black!40] (1,.2) node {\small $1$}; 
\draw[color=black!40] (-.2,1) node {\small $1$}; 
\draw[color=black!40] (-.5,-.3) node {\small $1$}; 
\draw (.5,2) node {\small $\gamma+e_3$};
\draw (2,.2) node {\small $\gamma+e_1$};
\draw (.2,-.2) node {\small $\gamma$};
\draw (-.5,-1) node {\small $\gamma+e_2$};
\draw (-.5,2.3) node {\small $(iii)$};
\end{scope}
\end{tikzpicture}

\caption{{\em Class $\cal C$.} {\it (i),(ii):} 1-alt complete 2D projections, drawn up to reflections about the diagonal and reversing all arrows. {\it(iii):} the possible locations of the source complexes when $n=3$ are $\gamma, \gamma+e_1, \gamma+e_2, \gamma+e_3$.}\label{fig:C2D}
\end{figure}

\begin{rmk}{\em (Relations between classes of networks).}\label{rem:diagram} It is clear that networks of classes $\cal L$, ${\cal S}_2^z$ and ${\cal S}_2^{nz}$ are 1-alt complete. Figure \ref{fig:A1networks} shows the set-theoretical relations between the various classes of networks defined in the paper.
\end{rmk}

\begin{figure}[h]
\tikzstyle{line}=[draw,-latex']
\centering
\begin{tikzpicture}[]
\node  [draw,minimum width= 16cm, minimum height=5cm, fill=gray!5!white] at (0,0) {};
\node  at (7.2,2) {\textbf{\ac}};
\node  [draw,minimum width=9cm, minimum height=4cm, fill=gray!10!white] at (-3,0) {};
\node  at (-7.2,1.7) {${\cal Z}$ };
\node  [draw,minimum width=2cm, minimum height=1.5cm, fill=gray!20!white]  at (-4,0) {};
\node  at (-4,0) {${\cal L}$};
\node  [draw,minimum width=2cm, minimum height=1.5cm, fill=gray!20!white]  at (-1,0) {};
\node  at (-1,0) {${\cal S}_2^z$ };
\node  [draw,minimum width=2cm, minimum height=1.5cm, fill=gray!20!white, fill opacity=.6]  at (2,0) {};
\node  at (2,0) {$\cal C$};
\node  [draw,minimum width= 2cm, minimum height=1.5cm, fill=gray!20!white, fill opacity=.6]  at (5,0) {};
\node  at (5,0) {${\cal S}_2^{nz}$}; 
\end{tikzpicture}
\caption{Inclusion relations between classes of networks \ac, ${\cal Z},\ {\cal S}_2^z, \cal L, {\cal S}_2^z \text{ and } \cal C$.}\label{fig:A1networks}
\end{figure}
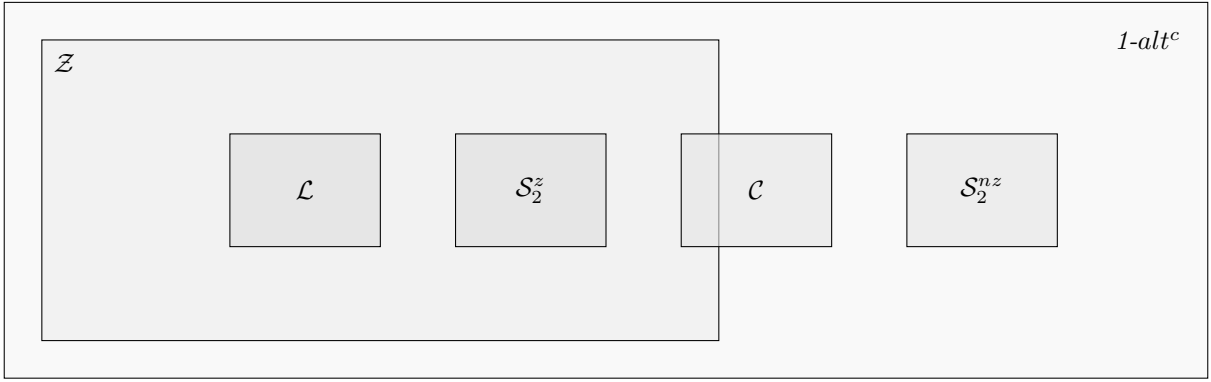


\section{Main result, examples, and discussion}\label{sec:main}

\subsection{Characterization of capacity for MPE/MPNE for rank 1 reaction networks}

\begin{thm}\label{thm:mainext}
Let ${\cal N}$ be a rank 1 reaction network. 
Then 
\begin{itemize} 
\item[(1)] If $\cal N$ has capacity for MPE then either $\cal N$ is 1-alt complete or ${\cal N}\in{\cal S}_1$.
\item[(2)] If ${\cal N}\in{\cal S}_1$ or if the essential projection of ${\cal N}$ is in ${\cal D}={\cal S}_2^z\cup{\cal L}\cup {\cal S}_2^{nz}\cup {\cal C}$ then $\cal N$ has the capacity for MPE, but it does not have the capacity for MPNE. 
\item[(3)] If the essential projection of $\cal N$ is 1-alt complete and not in $\cal D$ then $\cal N$ has the capacity for MPNE.
\end{itemize}
\end{thm}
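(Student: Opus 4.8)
The plan is to read all three parts off the single-variable polynomial $f(\eta,\kappa;t)$ from Section~\ref{sec:notation}, always passing first to the essential projection ${\cal M}={\cal N}_I$ (by Proposition~\ref{prop:essential} and its corollary, together with Corollary~\ref{cor:nonzerovMPNE}, MPE and MPNE are inherited both ways between ${\cal N}$ and ${\cal M}$; and ${\cal N}$ is 1-alt complete iff ${\cal M}$ is, since projecting away coordinates $i$ with $v_i=0$ or with all sources equal removes only coordinates on which no $1$D pattern can occur). Thus ${\cal M}$ is essential with $\ge2$ source complexes whenever ${\cal N}$ is 1-alt complete, and its spanning vector $v$ has no zero entry. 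By \eqref{eq:sst_0}, $\eta+t_0v$ is an equilibrium iff $f(\eta,\kappa;t_0)=0$; by Proposition~\ref{prop:DSSf} it is nondegenerate iff $f'(\eta,\kappa;t_0)\ne0$; and by Corollary~\ref{cor:infeq} an infinite root set forces degeneracy. So ${\cal N}$ has the capacity for MPE (resp.\ MPNE) iff for some $\eta,\kappa$ the function $f(\eta,\kappa;\cdot)$ has two roots (resp.\ two \emph{simple} roots) in $(\alpha^\eta,\beta^\eta)$, counting $f\equiv0$ as ``infinitely many, all non-simple''. The class ${\cal S}_1$ is immediate: there $f=(\lambda^+\kappa^++\lambda^-\kappa^-)(\eta+tv)^a$, identically zero for a suitable $\kappa$ (a full compatibility class of degenerate equilibria, hence MPE) but never possessing an isolated simple root, hence no MPNE.

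For (1) I argue the contrapositive: let ${\cal M}$ be essential with $\ge2$ sources and not 1-alt complete. Replacing ${\cal M}$ by $-{\cal M}$ preserves positive equilibria together with their (non)degeneracy and interchanges the $(\to,\gets)$ and $(\gets,\to)$ patterns, so I may assume ${\cal M}$ has no $(\gets,\to)$ pattern. A short combinatorial step shows this means that on each coordinate the source complexes carrying a reaction in the $+v$ direction (those in $R^+$) are separated from those carrying a reaction in the $-v$ direction (those in $R^-$), the side on which each family sits depending on $\operatorname{sign}(v_i)$; in particular $|R^0|\le1$. If there are no $+v$-reactions then $f$ is sign-definite on $(\alpha^\eta,\beta^\eta)$ and we are done; otherwise set $b_i=\max$ (if $v_i>0$) or $\min$ (if $v_i<0$) of the $i$-th coordinates of the sources in $R^+$. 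One checks that every term of $f(\eta,\kappa;t)/(\eta+tv)^{b}$ is a function of $t$ that is monotone on $(\alpha^\eta,\beta^\eta)$ with the \emph{same} monotonicity (plus, if $R^0=\{k_0\}$, a constant coming from the forced equality $a_{k_0}=b$), and since the sources are distinct the quotient is \emph{strictly} monotone. Hence $f(\eta,\kappa;\cdot)$ has at most one zero there, and ${\cal M}$ — so ${\cal N}$ — has no capacity for MPE.

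For (2) one may assume ${\cal N}={\cal N}_I\in{\cal D}$, and the same device handles all four classes uniformly: in each case factor $f=(\text{a monomial in }\eta+tv\text{, nonzero on }(\alpha^\eta,\beta^\eta))\cdot h$, where $h$ is affine in $t$ when ${\cal N}\in{\cal C}$ (because $a_k-\gamma\in\{0,e_1,\dots,e_n\}$); $h=c_a\varphi(t)^{d}-c_b$ with $\varphi$ a Möbius function of $t$ when ${\cal N}\in{\cal S}_2^z$ (slope $-1$ turns $y^{a-b}$ into a power of $y_i/y_j$); $h$ is a sum $\sum_k c_kw(t)^{p_k}$ in a Möbius function $w$ whose coefficients, listed in increasing order of exponent, change sign at most once when ${\cal N}\in{\cal L}$ (sources on a line of slope $-1$ with reactions split by position, cf.\ Remark~\ref{rmk:L}); and $h=C_ay^{a-b}+C_b$ with $y^{a-b}$ strictly monotone in $t$ when ${\cal N}\in{\cal S}_2^{nz}$ (there $b-a\in Q(\pm v)$). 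In every case $h$ has at most one zero in $(\alpha^\eta,\beta^\eta)$ and it is simple — using strict monotonicity of the Möbius factor whenever $\eta\notin{\mathbb R}v$, and Descartes' rule of signs for ${\cal L}$ — unless $h\equiv0$, which one arranges by tuning $\kappa$ and, for ${\cal L},{\cal C},{\cal S}_2^z$, choosing $\eta$ on ${\mathbb R}v$ (equivalently, via \eqref{eq:computeCorner} for ${\cal C}$, choosing $\eta$ on a hyperplane). Then $f\equiv0$ along a compatibility class, giving MPE (with degenerate equilibria, by Corollary~\ref{cor:infeq}); and since by Lemma~\ref{prop:DSSh} an equilibrium is degenerate exactly when $h'(t_0)=0$, no choice of $\eta,\kappa$ yields two simple zeros in one compatibility class, so MPNE fails.

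Part (3) is the genuinely hard direction. By the rank-$1$ forms of Theorems~\ref{thm:inheritance}--\ref{thm:inheritance2} it suffices to exhibit inside ${\cal M}$ a subnetwork of a projection that has the capacity for MPNE, so I would first compute with $f$ to record a short list of \emph{witnesses} that do: the one-species $2$-alternating network $(\to,\gets,\to)$ (dividing by a power of $s$ gives $c_1-c_2s^{p}+c_3s^{q}$ with $0<p<q$, which for suitable $c_i>0$ has two simple positive roots, and in one species the whole positive ray is a single compatibility class); the two-species zigzags of slope $\ne-1$ (now the reduced factor $h$ is a genuine degree-$\ge2$ polynomial in $t$, and one picks $\eta,\kappa$ making it have two simple roots in $(\alpha^\eta,\beta^\eta)$); and a small number of three-source planar configurations that are 1-alt complete but lie neither on a line of slope $-1$ nor in a corner position. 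The remaining, and main, task is the combinatorial dichotomy that \emph{every essential 1-alt complete ${\cal M}\notin{\cal D}$ contains one of these witnesses}: use Proposition~\ref{prop:s2nz} to extract a three-reaction 1-alt complete subnetwork whenever ${\cal M}\notin{\cal S}_2^{nz}$; use a slope argument with Lemma~\ref{cor:31} and essentiality to show that if every zigzag of ${\cal M}$ has slope $-1$ then ${\cal M}$ collapses to a two-species network, hence lies in ${\cal S}_2^z$; and use the explicit descriptions of ${\cal L}$ (Remark~\ref{rmk:L}) and ${\cal C}$ (Definition~\ref{defn:class2}, \eqref{eq:computeCorner}) to see that every other $\ge3$-source 1-alt complete configuration already contains one of the witnesses. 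I expect this step to be the main obstacle: the four classes in ${\cal D}$ are cut out by tight and heterogeneous conditions — slope exactly $-1$, all sources on one line, the corner relation, the orthant condition $b-a\in Q(\pm v)$ — so leaving ${\cal D}$ is not a single clean inequality, and each ``near miss'' of each class must be ruled out separately; the factorizations from Part (2) are precisely what identify which substructures fail to witness MPNE and hence which ones must be excluded.
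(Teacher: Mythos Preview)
Your approach is correct and matches the paper's almost step for step. Parts (1) and (2) are essentially the paper's arguments: the paper also proves (1) contrapositively by dividing $f$ through by a suitable monomial (an intermediate complex $\delta$ rather than your extremal $b$) and reading off opposite strict monotonicities on the two sides; and (2) is handled by exactly the factorizations you describe, the only place needing real work being ${\cal N}\in{\cal C}$, where arranging $A_0=A_1=0$ (your ``tuning $\kappa$'') requires a page of case analysis on which of $R^0$, $R\setminus R^0$ are empty and on the signs of $\lambda_kv_k$.

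For Part (3) your outline (inheritance $+$ witnesses $+$ combinatorial dichotomy) is again the paper's, but the paper's organization is what tames the ``main obstacle'' you anticipate, and it differs from yours. Rather than running the dichotomy for all $n$ at once, the paper first settles $n=2$ completely: Lemma~\ref{lem:39} exhibits five explicit two-species three-reaction witnesses (Figure~\ref{fig:MPNEmotifs}) via the device $f=(\text{monomial})\cdot(\kappa_2-g(t))$ with $g(t_0)>0$, $g'(t_0)=0$, $g''(t_0)\ne0$; Lemmas~\ref{lem:100}--\ref{lem:02} then reduce every two-species three-reaction 1-alt complete network outside ${\cal C}$ to one of those five; and two further propositions finish off larger two-species networks. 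For $n\ge3$ the paper argues contrapositively: if ${\cal N}$ has no MPNE then, by inheritance and the $n=2$ case, every 1-alt complete $2$D projection lies in ${\cal D}$, and three separate propositions (\ref{lem:S2zLproj}, \ref{prop:main}, \ref{lem:S2nzproj}) show that a single $2$D projection in ${\cal S}_2^z\cup{\cal L}$, in ${\cal C}$, or in ${\cal S}_2^{nz}$ forces ${\cal N}$ itself into that class (the ${\cal C}$ case going through an intermediate $n=3$ step). Your slope-$-1$ collapse via Lemma~\ref{cor:31} is Proposition~\ref{lem:S2zLproj}; the other two lifts are not as immediate.
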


\begin{ex}
Consider the network $\cal N$ on 6
species:
\begin{eqnarray*}
&&(0,1,2,0,2,2)\to(1,2,3,0,1,1), \quad (1,0,3,1,2,2)\to (2,1,4,1,1,1),\\
&&(2,2,1,2,1,2)\to(1,1,0,2,2,3), \quad (3,3,1,0,1,2)\to (2,2,0,0,2,3)
\end{eqnarray*}

The stoichiometric space of $\cal N$ is spanned by $v=(1,1,1,0,-1,-1)$. Since the sixth coordinate is the same for all source complexes, the essential projection of $\cal N$ is ${\cal N}_{\{1,2,3,5\}}$ and rename it as $\cal N$.
The four 1D projections of ${\cal N}$ are as follows:
\begin{center}
\begin{tikzpicture}[scale=1.4][h]
\draw [->, black!40](-.5,0)--(3.7,0);
\draw (3.8,-.1) node[color=black!40] {\scriptsize $1$}; 
\draw[fill=black] (0,0) circle[ radius=0.03];
\draw[fill=black] (2,0) circle[ radius=0.03];
\draw[fill=black] (1,0) circle[ radius=0.03];
\draw[fill=black] (3,0) circle[ radius=0.03];
\draw [->, thick](0,0)--(.4,0);
\draw [->, thick](1,0)--(1.4,0);
\draw [->, thick](2,0)--(1.6,0);
\draw [->, thick](3,0)--(2.6,0);
\begin{scope}[yshift=-20]
\draw [->, black!40](-.5,0)--(3.7,0);
\draw (3.8,-.1) node[color=black!40] {\scriptsize $3$}; 
\draw[fill=black] (1,0) circle[ radius=0.03];
\draw[fill=black] (2,0) circle[ radius=0.03];
\draw[fill=black] (3,0) circle[ radius=0.03];
\draw [->, thick](1,0)--(.6,0);
\draw [->, thick](2,0)--(2.4,0);
\draw [->, thick](3,0)--(3.4,0);
\end{scope}
\begin{scope}[xshift=150]
\draw [->, black!40](-.5,0)--(3.7,0);
\draw (3.8,-.1) node[color=black!40] {\scriptsize $2$}; 
\draw[fill=black] (0,0) circle[ radius=0.03];
\draw[fill=black] (1,0) circle[ radius=0.03];
\draw[fill=black] (2,0) circle[ radius=0.03];
\draw[fill=black] (3,0) circle[ radius=0.03];
\draw [->, thick](0,0)--(.4,0);
\draw [->, thick](1,0)--(1.4,0);
\draw [->, thick](2,0)--(1.6,0);
\draw [->, thick](3,0)--(2.6,0);
\end{scope}
\begin{scope}[xshift=150,yshift=-20]
\draw [->, black!40](-.5,0)--(3.7,0);
\draw (3.8,-.1) node[color=black!40] {\scriptsize $4$}; 
\draw[fill=black] (1,0) circle[ radius=0.03];
\draw[fill=black] (2,0) circle[ radius=0.03];
\draw [->, thick](1,0)--(1.4,0);
\draw [->, thick](2,0)--(1.6,0);
\end{scope}
\end{tikzpicture}
\end{center}
We see that ${\cal N}\in$\ac, so $\cal N$ has the capacity for MPE. Since ${\cal N}_{\{1\}}$ contains four source complexes and $\cal N$ is not on two species
${\cal N}\notin {\cal D}$. $\cal N$ has the capacity for MPNE by theorem \ref{thm:mainext}.
\end{ex}

More examples of arrow diagrams and their capacity for MPE/MPNE are given in Figure \ref{fig:2Dexamples}. These include at least one network from each of the sets forming the partition of \ac in Figure \ref{fig:A1networks}.

\begin{figure}[h]
\centering
\begin{tikzpicture}[scale=.7]
\begin{scope}[scale=1.25]
\draw[fill=black] (2,1) circle[ radius=0.05];
\draw[fill=black] (0,2) circle[ radius=0.05];
\draw[fill=black] (2,2) circle[ radius=0.05];
\draw [->,  thick](2,1)--(2.4,1.4);
\draw [->,  thick](0,2)--(-.4,1.6) ;
\draw [<->, thick](1.6,1.6)--(2.4,2.4);
\draw [color=black!40] (2,1)--(2,2)--(0,2);
\draw (-.5,2.3) node {\scriptsize $(a)$};
\end{scope}
\begin{scope}[xshift=150, scale=1.25]
\draw[fill=black] (2,0) circle[ radius=0.05];
\draw[fill=black] (0,2) circle[ radius=0.05];
\draw[fill=black] (0,0) circle[ radius=0.05];
\draw [->,  thick](2,0)--(2.4,.4);
\draw [->,  thick](0,2)--(-.4,1.6) ;
\draw [<->,  thick](-.4,-.4)--(.4,.4);
\draw [color=black!40] (2,0)--(0,0)--(0,2);
\draw[color=black!40] (1,-.2) node {\scriptsize $1$}; 
\draw[color=black!40] (-.2,1) node {\scriptsize $1$}; 
\draw (-.5,2.3) node {\scriptsize $(b)$};
\end{scope}
\begin{scope}[xshift=300, scale=1.25]
\draw[fill=black] (2,0) circle[ radius=0.05];
\draw[fill=black] (0,2) circle[ radius=0.05];
\draw[fill=black] (0,0) circle[ radius=0.05];
\draw [->,  thick](2,0)--(2.4,.4);
\draw [->,  thick](0,2)--(-.4,1.6) ;
\draw [<->,  thick](-.4,-.4)--(.4,.4);
\draw [color=black!40] (2,0)--(0,0)--(0,2);
\draw[color=black!40] (1,-.2) node {\scriptsize $2$}; 
\draw[color=black!40] (-.2,1) node {\scriptsize $2$}; 
\draw (-.5,2.3) node {\scriptsize $(c)$};
\end{scope}
\begin{scope}[xshift=450, scale=1.25]
\draw[fill=black] (2,0) circle[radius=0.05];
\draw[fill=black] (0,2) circle[radius=0.05];
\draw[fill=black] (1,1) circle[radius=0.05];
\draw [->,  thick](2,0)--(2.4,.4);
\draw [->,  thick](0,2)--(-.4,1.6) ;
\draw [<->,  thick](.6,.6)--(1.4,1.4);
\draw [color=black!40] (2,0)--(0,0)--(0,2)--(2,2)--(2,0)--(0,2);
\draw (-.5,2.3) node {\scriptsize $(d)$};
\end{scope}
\begin{scope}[yshift=-130, scale=1.25]
\draw[fill=black] (2,0) circle[radius=0.05];
\draw[fill=black] (0,2) circle[radius=0.05];
\draw[fill=black] (1,1) circle[radius=0.05];
\draw [<->,  thick](1.6,-.4)--(2.4,.4);
\draw [<->,  thick](.4,2.4)--(-.4,1.6) ;
\draw [<->,  thick](.6,.6)--(1.4,1.4);
\draw [color=black!40] (2,0)--(0,0)--(0,2)--(2,2)--(2,0)--(0,2);
\draw (-.5,2.3) node {\scriptsize $(e)$};
\end{scope}
\begin{scope}[xshift=150, yshift=-130, scale=1.25]
\draw[fill=black] (2,0) circle[radius=0.05];
\draw[fill=black] (0,2) circle[radius=0.05];
\draw [<->,  thick](1.6,-.4)--(2.4,.4);
\draw [<->,  thick](.4,2.4)--(-.4,1.6) ;
\draw[color=black!40](2,0)--(0,0)--(0,2)--(2,2)--(2,0);
\draw (-.5,2.3) node {\scriptsize $(f)$};
\end{scope}
\begin{scope}[xshift= 300, yshift=-130, scale=1.25]
\draw[fill=black] (2,0) circle[radius=0.05];
\draw[fill=black] (0,1) circle[radius=0.05];
\draw [<->,  thick](1.6,-.4)--(2.4,.4);
\draw [<->,  thick](.4,1.4)--(-.4,.6) ;
\draw[color=black!40](2,0)--(0,0)--(0,1)--(2,1)--(2,0);
\draw (-.5,2.3) node {\scriptsize $(g)$};
\end{scope}
\begin{scope}[xshift=450, yshift=-130, scale=1.25]
\draw[fill=black] (2,0) circle[radius=0.05];
\draw[fill=black] (0,1) circle[radius=0.05];
\draw [<->,  thick](1.6,.4)--(2.4,-.4);
\draw [<->,  thick](-.4,1.4)--(.4,.6) ;
\draw[color=black!40](2,0)--(0,0)--(0,1)--(2,1)--(2,0);
\draw (-.5,2.3) node {\scriptsize $(h)$};
\end{scope}
\begin{scope}[yshift=-260, scale=1.25]
\draw[fill=black] (0,0) circle[radius=0.05];
\draw[fill=black] (2,1) circle[radius=0.05];
\draw [<->,  thick](1.6,1.4)--(2.4,.6);
\draw [<->,  thick](-.4,.4)--(.4,-.4) ;
\draw[color=black!40](2,0)--(0,0)--(0,1)--(2,1)--(2,0);
\draw (-.5,2.3) node {\scriptsize $(i)$};
\end{scope}
\begin{scope}[xshift=150, yshift=-260, scale=1.25]
\draw[fill=black] (2,0) circle[radius=0.05];
\draw[fill=black] (0,1) circle[radius=0.05];
\draw [<->,  thick](1.6,.4)--(2.4,-.4);
\draw [->,  thick](0,1)--(.4,.6) ;
\draw[color=black!40](2,0)--(0,0)--(0,1)--(2,1)--(2,0);
\draw (-.5,2.3) node {\scriptsize $(j)$};
\end{scope}
\begin{scope}[xshift=300, yshift=-260, scale=1.25]
\draw[fill=black] (2,0) circle[radius=0.05];
\draw[fill=black] (0,1) circle[radius=0.05];
\draw[fill=black] (0,0) circle[radius=0.05];
\draw [<-,  thick](2.4,-.4)--(2,0);
\draw [<-,  thick](.4,.6)--(0,1);
\draw [->,  thick](0,0)--(-.4,.4);
\draw[color=black!40](2,0)--(0,0)--(0,1)--(2,1)--(2,0);
\draw (-.5,2.3) node {\scriptsize $(k)$};
\end{scope}
\begin{scope}[yshift=-260,xshift=450, scale=1.25]
\draw[fill=black] (2,0) circle[radius=0.05];
\draw[fill=black] (0,2) circle[radius=0.05];
\draw[fill=black] (0,0) circle[radius=0.05];
\draw [<-,  thick](2.4,-.4)--(2,0);
\draw [<-,  thick](.4,1.6)--(0,2);
\draw [->,  thick](0,0)--(-.4,.4);
\draw[color=black!40](2,0)--(0,0)--(0,2)--(2,2)--(2,0);
\draw (-.5,2.3) node {\scriptsize $(l)$};
\draw[color=black!40] (1,-.2) node {\scriptsize $1$}; 
\draw[color=black!40] (-.2,1) node {\scriptsize $1$}; 
\end{scope}
\end{tikzpicture}
\caption{Examples of 2D arrow diagrams and their capacity for MPE/MPNE. 
$(a)$ ${\cal Z}\setminus({\cal S}_2^{nz}\cup{\cal L}\cup{\cal C})$: MPNE. 
$(b)$ ${\cal C}\cap{\cal Z}$: MPE, no MPNE. 
$(c)$ ${\cal Z}\setminus({\cal S}_2^{z}\cup{\cal L}\cup{\cal C})$: MPNE. 
$(d)$ ${\cal L}$: MPE, no MPNE. 
$(e)$: ${\cal Z}\setminus({\cal L}\cup{\cal C}\cup{\cal S}_2^{z})$: MPNE.
$(f)$ ${\cal S}_2^z$: MPE, no MPNE.
$(g)$ {\em (slope$\neq$-1)} ${\cal Z}\setminus({\cal S}_2^{z}\cup{\cal L}\cup{\cal C})$: MPNE. 
$(h)$ ${\cal S}_2^{nz}$: MPE, no MPNE.
$(i)$ ${\cal Z}\setminus({\cal S}_2^{z}\cup{\cal L}\cup{\cal C})$: MPNE.
$(j)$ not \ac: no MPE.
$(k)$ {\em (rectangle is not a square)} \ac$\setminus ({\cal Z}\cup{\cal C}\cup{\cal S}_2^{nz})$: MPNE.
$(l)$ ${\cal C}\setminus{\cal Z}$: MPE, no MPNE.
}
\label{fig:2Dexamples}
\end{figure}
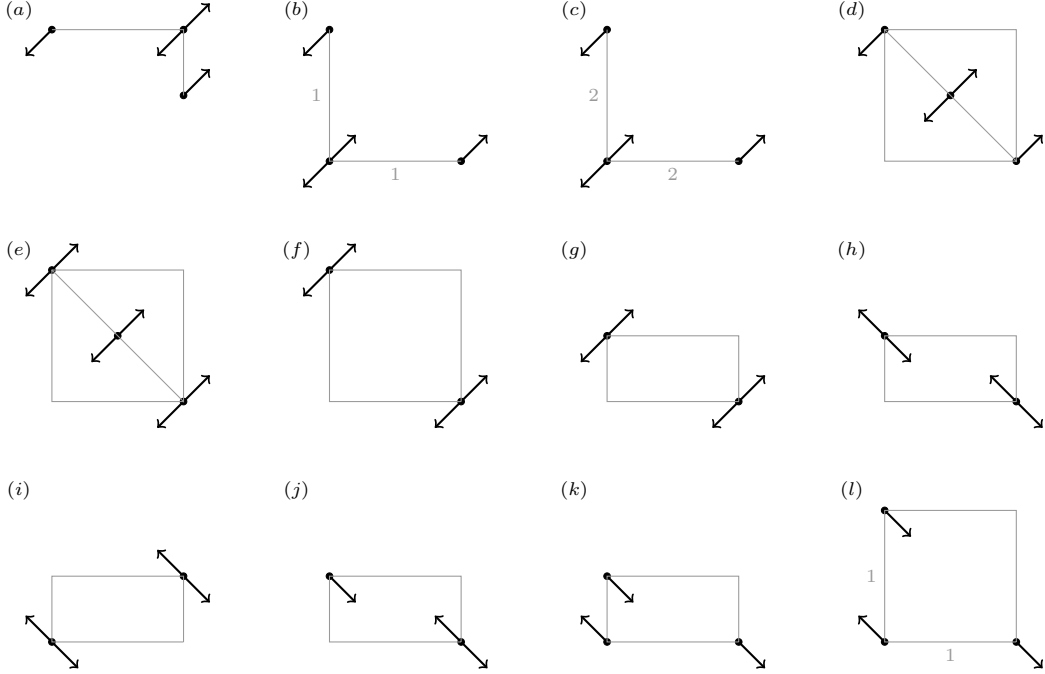

Theorem \ref{thm:mainext} has the following useful consequence.

\begin{cor}\label{cor:2alt} 
If ${\cal N}\in$\at then $\cal N$ has the capacity for MPNE. 
\end{cor}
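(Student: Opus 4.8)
The plan is to derive the corollary from part~(3) of Theorem~\ref{thm:mainext}: that part yields the capacity for MPNE as soon as the essential projection of $\cal N$ is 1-alt complete and lies outside ${\cal D}={\cal S}_2^z\cup{\cal L}\cup{\cal S}_2^{nz}\cup{\cal C}$, so it suffices to verify these two properties when ${\cal N}\in$\at.

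First I would check that membership in \at passes to the essential projection. If ${\cal N}\in$\at, pick a coordinate $i$ for which ${\cal N}_{\{i\}}$ contains both patterns $(\to,\gets,\to)$ and $(\gets,\to,\gets)$. Every reaction of $\cal N$ has reaction vector a multiple $\lambda_k v$ of the spanning vector $v$ (with $\lambda_k\ne0$), so its projection onto $\{i\}$ is nontrivial only when $v_i\ne0$; since ${\cal N}_{\{i\}}$ has arrows we get $v_i\ne0$. Moreover each of the two patterns involves three distinct source complexes, so ${\cal N}_{\{i\}}$, and hence $\cal N$, has at least three source complexes. It follows that $i$ lies in the index set $I$ defining the essential projection, and projecting ${\cal N}_I$ onto $\{i\}$ recovers ${\cal N}_{\{i\}}$; thus ${\cal N}_I\in$\at and, by Remark~\ref{rmk:1alt2alt}, ${\cal N}_I$ is 1-alt complete. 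Replacing $\cal N$ by ${\cal N}_I$, it remains to show that an essential \at network is not in $\cal D$.

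Then I would eliminate the four classes one at a time. Since ${\cal N}_I$ has at least three source complexes, it is neither in ${\cal S}_2^z$ nor in ${\cal S}_2^{nz}$, both of which by definition contain exactly two source complexes. It is not in $\cal L$ by Remark~\ref{rmk:L}, which characterizes $\cal L$ as the 1-alt complete two-species networks with all source complexes on a line of slope $-1$ that are \emph{not} 2-alternating. Finally, if ${\cal N}_I\in{\cal C}$ with corner $\gamma$, then each source complex equals $\gamma$ or $\gamma+e_s$ for some $s$, so on any 1D projection $\{j\}$ the source complexes take at most the two values $\gamma_j$ and $\gamma_j+1$; hence no 1D projection of a $\cal C$ network has the three source complexes required by \at. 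Therefore ${\cal N}_I\notin{\cal D}$, and Theorem~\ref{thm:mainext}(3) gives that $\cal N$ has the capacity for MPNE.

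There is no genuine obstacle: once Theorem~\ref{thm:mainext} is in hand, the corollary is a matter of unwinding definitions. The only step that calls for a moment's thought is the ${\cal C}$ case—the observation that the corner structure caps at two the number of source complexes of every 1D projection—together with the small bookkeeping that confirms $i\in I$, so that \at indeed descends to the essential projection.
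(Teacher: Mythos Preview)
Your proposal is correct and follows essentially the same route as the paper: both derive the corollary from Theorem~\ref{thm:mainext}(3) by observing that every network in ${\cal D}$ has at most two source complexes in each 1D projection, which is incompatible with the three required by the \at patterns. Your version is more careful than the paper's one-line proof in that you explicitly verify that the \at property descends to the essential projection (the paper's proof silently conflates $\cal N$ with its essential projection when invoking Theorem~\ref{thm:mainext}), and you treat the four subclasses of $\cal D$ separately rather than asserting the two-source-complex bound in one breath; but the underlying idea is identical.
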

\begin{proof}
If ${\cal N}\in \cal D$ then each 1D projection of $\cal N$ has two source complexes, contradicting ${\cal N}\in$\at. Therefore $\cal N$ has the capacity for MPNE by Theorem \ref{thm:mainext}.
\end{proof}
 
\subsection{Discussion and past results}\label{sec:disc}
Multistationarity of rank 1 reaction networks has been the focus of a number of recent papers \cite{JoshiShiu17, ShiuWolff19, TangXu21, Lin2022, TangZhang22}. These works study structural conditions for multistationarity (and/or multistability), and are connected to Conjecture \ref{conj:1}, which we prove in this paper. Joshi and Shiu showed that the conjecture holds for networks on one species, and for networks composed of two (possibly reversible) reactions. They also characterized the capacity for MPNE for networks on one species or networks with exactly two reactions. Theorem \ref{thm:mainext} extends this characterization of MPNE to the general case, revealing new structures with capacity for MPE, but not MPNE. We review the relevant results from the work of Joshi and Shiu \cite{JoshiShiu17} further down this section. 

Other partial results towards Conjecture \ref{conj:1} are known. Lin et al. \cite[Theorem 4.3]{Lin2022} proved that being 1-alt complete is a necessary condition for multistationarity under the assumption that the network cannot have an infinite number of positive equilibria for any choice of rate constants. This excludes all networks that have the capacity for MPE but not MPNE, but also some networks that have the capacity for MPNE (see Figure \ref{fig:2Dexamples}(e)). Lin et al. also showed that being 1-alt complete is sufficient for multistationarity if there exist two reactions in the network such that the subnetwork consisting of the two reactions admits at least one and finitely many positive steady states \cite[Theorem 4.7]{Lin2022}. Note that this technical condition cannot be immediately related with the structure of the network. 


Other work on multistationarity addressed conditions for the existence of three or more positive equilibria \cite{Lin2022}, conditions for multistability and the number of stable equilibria \cite{TangXu21, TangZhang22}, and conditions for the existence of nondegenerate multiple equilibria. Joshi and Shiu showed that (1) networks with one irreversible and one reversible reaction and  (2) networks with two reversible reactions must necessarily have rank 1 to be multistationary. They also characterized the capacity for MPE for these networks \cite[Theorems 2.8, 5.12]{JoshiShiu17}. In subsequent work Shiu and de Wolff characterized the capacity for MPNE for networks of type (1) and (2) above for the special case of two species \cite{ShiuWolff19}. These results have quick proofs based on Theorem \ref{thm:mainext}, and so does the general case (any number of species), posed as an open question in \cite{ShiuWolff19}:

\begin{prop}[\cite{ShiuWolff19} Conjecture 5.1]
A network $\cal N$ consisting of one pair of reversible reactions and one irreversible reaction has the capacity for MPNE if and only if ${\cal N}\in$\at.
\end{prop}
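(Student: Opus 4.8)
The plan is to deduce both directions from Theorem~\ref{thm:mainext} and Corollary~\ref{cor:2alt}, with the only real work being an elementary combinatorial observation specific to networks of this shape: \emph{for a rank~$1$ network consisting of one reversible pair and one irreversible reaction, being 1-alt complete is equivalent to being 2-alternating.}

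The implication ${\cal N}\in$\at $\Rightarrow$ MPNE is immediate: a network in \at is rank~$1$ by definition, so Corollary~\ref{cor:2alt} applies directly. For the converse, write ${\cal N}=\{a\to b,\ b\to a,\ c\to c'\}$ and suppose $\cal N$ has the capacity for MPE. First I would note that $\cal N$ must then be rank~$1$: with the reactions ordered as written, the columns of $\Gamma$ are $b-a$, $a-b$, $c'-c$, so if $b-a$ and $c'-c$ were linearly independent then $\operatorname{rank}\Gamma=2$ and $\ker\Gamma=\operatorname{span}\{(1,1,0)^t\}$, which would force the rate $\kappa_3 x^c$ of $c\to c'$ to vanish at every positive steady state --- impossible since $x\in{\mathbb R}^n_{>0}$. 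Hence $c'-c=\mu(b-a)$ with $\mu\neq0$; after possibly swapping the labels $a$ and $b$ and rescaling the spanning vector $v$ of the stoichiometric subspace we may assume $b-a=v$ and $c'-c=\mu v$ with $\mu>0$. Since $a\neq b$, $\cal N$ has at least two source complexes and so $\cal N\notin{\cal S}_1$; Theorem~\ref{thm:mainext}(1) then gives that $\cal N$ is 1-alt complete.

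It remains to upgrade ``1-alt complete'' to ``2-alternating''. Fix a coordinate $i$ with $v_i\neq0$ (coordinates with $v_i=0$ produce no reactions in ${\cal N}_{\{i\}}$, so they are irrelevant). On ${\cal N}_{\{i\}}$ the two reactions of the reversible pair are arrows pointing toward each other, hence on their own they realize only the $(\to,\gets)$ pattern and never $(\gets,\to)$; and because $\mu>0$, the projection of $c\to c'$ points in the same direction as that of $a\to b$ on every coordinate. A short case check on $\operatorname{sign}(v_i)$ and on the position of $c_i$ relative to $a_i$ and $b_i$ --- first ruling out $c_i\in\{a_i,b_i\}$, where no $(\gets,\to)$ can occur --- then shows that if ${\cal N}_{\{i\}}$ contains the $(\gets,\to)$ pattern at all, then $a_i,b_i,c_i$ are pairwise distinct, $c_i$ lies on the far side of $b_i$ from $a_i$, and the three arrows of ${\cal N}_{\{i\}}$ already spell out $(\to,\gets,\to)$ when $v_i>0$ and $(\gets,\to,\gets)$ when $v_i<0$. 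Since $\cal N$ is 1-alt complete, some ${\cal N}_{\{i\}}$ does contain the $(\gets,\to)$ pattern, and for that $i$ we conclude ${\cal N}\in$\at, which finishes the proof.

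I expect the main obstacle to be purely bookkeeping: carrying out the last case analysis exactly, in particular handling the coincidences $c_i=a_i$ and $c_i=b_i$ (the latter creating a bidirectional source) and making sure the merging convention for coincident equidirectional reactions does not quietly collapse the reversible-pair structure --- when that collapse does happen one checks separately that the resulting two-reaction network fails to be 1-alt complete, so it does not arise here. A minor point to verify along the way is that replacing $\cal N$ by $-\cal N$ (or $v$ by $-v$) is harmless, which holds because this swaps $(\to,\gets)\leftrightarrow(\gets,\to)$ and $(\to,\gets,\to)\leftrightarrow(\gets,\to,\gets)$ and therefore preserves both ``1-alt complete'' and ``2-alternating''.
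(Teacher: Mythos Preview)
Your proof is correct and takes a genuinely different route from the paper's. The paper argues the forward direction by assuming MPNE, deducing rank~1 (citing \cite{JoshiShiu17}) and 1-alt completeness (via Theorem~\ref{thm:mainext}), and then checking class by class that the essential projection of $\cal N$ lies outside ${\cal S}_2^z$, ${\cal L}$, ${\cal S}_2^{nz}$, and ${\cal C}$. You instead prove the purely combinatorial fact that \emph{for a rank~1 network of this shape, 1-alt complete is already equivalent to 2-alternating}, which lets you bypass the ${\cal D}$ classification entirely. This is cleaner: your argument uses only Theorem~\ref{thm:mainext}(1) and Corollary~\ref{cor:2alt}, never touching the structural descriptions of the four degenerate classes. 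It also makes explicit a step the paper leaves tacit---the paper's written argument stops at ``1-alt complete and not in ${\cal D}$'' without spelling out why that forces ${\cal N}\in$\at\ for this network shape, whereas your combinatorial lemma closes that gap directly. A small bonus is that your rank~1 argument is self-contained rather than an external citation. The paper's approach, on the other hand, generalizes more readily (the same template is reused verbatim for the two-reversible-pairs case in the next proposition), so each method has its place.
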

\begin{proof} Assume without loss of generality that ${\cal N}=\{a\rightleftharpoons b, c\to c'\}$ is an essential network.  Suppose ${\cal N}$ has the capacity for MPNE. Then $\cal N$ must have rank 1 \cite{JoshiShiu17}, discussed above. By Theorem \ref{thm:mainext}, $\cal N$ must be 1-alt complete, which implies that $c\notin\{a,b\}$.
It is clear that ${\cal N}\notin {\cal L}\cup{\cal S}_2^z$ (for these classes reaction vectors cannot be parallel to the line of source complexes) and that ${\cal N}\notin {\cal S}_2^{nz}$ (that requires four reactions). Moreover, ${\cal N}\notin\cal C$, since then it would have arrow diagram in Figure \ref{fig:C2D}(i),(ii), contradiction. The other implication follows from Corollary \ref{cor:2alt}.
\end{proof}

The same argument can be applied to characterize the capacity for MPNE for networks composed of two reversible reactions and any number of species, generalizing Theorem 3.6 in \cite{ShiuWolff19}:

\begin{prop}
A network $\cal N$ consisting two pairs of reversible reactions has the capacity for MPNE if and only if $\cal N$ has rank 1 and ${\cal N}\in$\at  (equivalently, a 1D projection of $\cal N$ contains the $(\to,\gets,\to,\gets)$ pattern.
\end{prop}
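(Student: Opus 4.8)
The plan is to follow the route of the preceding proposition. Write ${\cal N}=\{a\rightleftharpoons b,\ c\rightleftharpoons d\}$ and let ${\cal M}$ be its essential projection; ${\cal N}$ and ${\cal M}$ have the same capacity for MPE and for MPNE (the corollary to Proposition \ref{prop:essential}). The reverse implication is immediate: if ${\cal N}$ has rank $1$ and ${\cal N}\in$\at, then ${\cal N}$ has the capacity for MPNE by Corollary \ref{cor:2alt}.

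For the forward implication, suppose ${\cal N}$ has the capacity for MPNE. A network built from two reversible reactions must have rank $1$ to be multistationary \cite{JoshiShiu17}, so ${\cal N}$, and hence ${\cal M}$, has rank $1$. Being the essential projection of a network consisting of two reversible pairs, ${\cal M}$ is itself a disjoint union of reversible pairs: projecting a pair $u\rightleftharpoons w$ either kills both of its reactions (when $u$ and $w$ coincide on the surviving coordinates), keeps the pair, or identifies it with the projection of the other pair. It cannot be empty, for then it would have no MPE; and it cannot be a single reversible pair $u\rightleftharpoons w$, because such a network has two source complexes (so it is not in ${\cal S}_1$) and, on every coordinate $i$ with $u_i\neq w_i$, its 1D projection has arrow diagram $(\to,\gets)$ with the $\to$ at the smaller source, hence contains no $(\gets,\to)$ pattern and is not 1-alt complete, so by Theorem \ref{thm:mainext}(1) it has no capacity for MPE --- a contradiction. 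Thus ${\cal M}$ consists of two distinct reversible pairs, so it has at least three source complexes; in particular ${\cal M}\notin{\cal S}_1$, and Theorem \ref{thm:mainext}(1) forces ${\cal M}$ to be 1-alt complete.

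It remains to show that a 1-alt complete ${\cal M}$ of this form is in fact 2-alternating. For the preceding proposition this step excluded each class of ${\cal D}$ and invoked Theorem \ref{thm:mainext}(3); here it can be obtained directly. Consider any 1D projection ${\cal M}_{\{i\}}$. Since the two reaction vectors of ${\cal M}$ are parallel (rank $1$) and, ${\cal M}$ being essential, nonzero on coordinate $i$, each reversible pair projects on coordinate $i$ to a pair of distinct source complexes $p<q$, with the increasing reaction based at the smaller complex $p$ and the decreasing reaction based at the larger complex $q$. Hence, in ${\cal M}_{\{i\}}$, the two increasing reactions sit at the pair-minima and the two decreasing reactions at the pair-maxima; a $(\gets,\to)$ pattern --- a decreasing reaction whose source precedes the source of an increasing reaction --- can therefore occur only if one pair's maximum precedes the other pair's minimum, which arranges the four sources as $p_1<q_1<p_2<q_2$ with arrow diagram $(\to,\gets,\to,\gets)$, a 2-alternating projection in the sense of Definition \ref{def:2alt}. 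Since ${\cal M}$ is 1-alt complete, some 1D projection realizes the $(\gets,\to)$ pattern, hence is 2-alternating, so ${\cal M}\in$\at, and therefore ${\cal N}\in$\at because 1D projections compose. (As a byproduct, a 1-alt complete network consisting of two reversible pairs is never in ${\cal D}$, since members of ${\cal D}$ are never 2-alternating: those in ${\cal S}_2^z$ and ${\cal S}_2^{nz}$ have only two source complexes, while those in ${\cal L}$ and ${\cal C}$ lack the capacity for MPNE, which every 2-alternating network has by Corollary \ref{cor:2alt}.) The parenthetical reformulation in the statement is then immediate, as for a network with only four reactions being 2-alternating coincides with having the $(\to,\gets,\to,\gets)$ pattern on some 1D projection.

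The step I expect to be the crux is the rigidity used in the last paragraph: in any 1D projection, the source of the increasing (resp.\ decreasing) reaction of a reversible pair must be the smaller (resp.\ larger) of the pair's two projected complexes. This is what makes the exclusion of ${\cal D}$ automatic for two reversible pairs, unlike the one-irreversible-one-reversible case, where the free product complex of the irreversible reaction allows configurations (for instance in ${\cal L}$ or ${\cal C}$) that must be ruled out by hand.
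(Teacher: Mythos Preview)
Your proof is correct and, in fact, a bit more transparent than the route the paper suggests. The paper does not spell out a proof here; it simply says ``the same argument can be applied,'' meaning: cite \cite{JoshiShiu17} for the rank-$1$ restriction, use Theorem~\ref{thm:mainext} to get 1-alt completeness, and then exclude membership in each of ${\cal S}_2^z$, ${\cal L}$, ${\cal S}_2^{nz}$, ${\cal C}$ one at a time using the special structure of reversible pairs, exactly as in the proof of the preceding proposition.

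Your argument replaces that case-by-case exclusion of ${\cal D}$ with a single combinatorial observation: on any 1D projection, each reversible pair $u\rightleftharpoons w$ places its increasing arrow at the smaller of the two projected sources and its decreasing arrow at the larger. This rigidity forces any $(\gets,\to)$ pattern to come from one pair's maximum lying strictly below the other pair's minimum, which immediately yields the full $(\to,\gets,\to,\gets)$ arrangement on that coordinate. This is a genuine simplification over the paper's approach for this particular class: you never touch the classes ${\cal S}_2^z$, ${\cal L}$, ${\cal S}_2^{nz}$, ${\cal C}$ at all, and the exclusion of ${\cal D}$ drops out as a byproduct rather than being the engine of the proof. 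The paper's route, on the other hand, has the virtue of being uniform across both the one-irreversible-one-reversible case and the two-reversible case, while your argument leans on a feature (the rigidity of reversible pairs) that is specific to the latter.

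Your treatment of the essential projection is careful and correct, including the verification that ${\cal M}$ cannot collapse to a single reversible pair without losing the capacity for MPE.
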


\medskip 

\noindent {\bf A review of results for one and two species.} The next theorem rephrases some of the implications of Theorems 3,6, 5.1 and 5.2 in \cite{JoshiShiu17}. We state these using our terminology.

\begin{thm}\cite[Theorem 3.6]{JoshiShiu17}\label{thm:1species} Let $\cal N$ be a reaction network on one species. Then 
\begin{itemize}
\item[1.] $\cal N$ has the capacity for MPE if and only it is a 2-alternating network or the arrow diagram of $\cal N$ is either $\leftarrow\!\!\bullet\!\!\rightarrow$ or
$(\leftarrow\!\!\bullet\!\!\rightarrow,  \leftarrow\!\!\bullet\!\!\rightarrow)$
\item[2.] $\cal N$ has the capacity for MPNE if and only if $\cal N$ is a 2-alternating network.
\end{itemize}
\end{thm}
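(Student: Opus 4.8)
\emph{Plan via Theorem \ref{thm:mainext}.} A one-species network with at least one reaction is a rank~$1$ network (the empty network makes the statement vacuous), so Theorem \ref{thm:mainext} applies, and the point is that its classes collapse drastically when $n=1$. Indeed ${\cal S}_2^z$ and ${\cal L}$ require two species, ${\cal C}$ requires three or more source complexes inside the two-point set $\{\gamma,\gamma+1\}$, and ${\cal Z}$ requires a $2$D projection; so among the classes making up ${\cal D}$ only ${\cal S}_2^{nz}$ survives, and one checks that a one-species ${\cal S}_2^{nz}$ network is exactly one with arrow diagram $(\leftarrow\!\!\bullet\!\!\rightarrow,\leftarrow\!\!\bullet\!\!\rightarrow)$, while a one-species ${\cal S}_1$ network is exactly one with arrow diagram $\leftarrow\!\!\bullet\!\!\rightarrow$. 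By Remark \ref{rmk:1alt2alt} a \ac one-species network is either \at or has arrow diagram $(\leftarrow\!\!\bullet\!\!\rightarrow,\leftarrow\!\!\bullet\!\!\rightarrow)$, and in the \at case it has at least three source complexes, equals its own essential projection, and hence is \ac but not in ${\cal D}$. Plugging these facts into the three items of Theorem \ref{thm:mainext} yields both parts: $\cal N$ has capacity for MPE iff it is \ac or lies in ${\cal S}_1$, i.e. iff it is \at or has arrow diagram $\leftarrow\!\!\bullet\!\!\rightarrow$ or $(\leftarrow\!\!\bullet\!\!\rightarrow,\leftarrow\!\!\bullet\!\!\rightarrow)$; and of these, exactly the \at ones miss both ${\cal S}_1$ and ${\cal D}$, hence exactly the \at ones have capacity for MPNE.

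\emph{A self-contained backup plan}, in case the one-species case is itself needed as an input to the proof of Theorem \ref{thm:mainext}. Take $v=1$, so there is a single positive compatibility class $(0,\infty)$; after merging same-direction reactions out of a common source (Section \ref{sec:notation}), the equilibrium condition \eqref{eq:sst_0} reads $f(t)=\sum_{k=1}^{m}c_k\,t^{a_k}=0$ with distinct exponents $a_1<\dots<a_m$, where $c_k$ ranges over $\mathbb R_{>0}$, over $\mathbb R_{<0}$, or over all of $\mathbb R$ (including $0$) according as the complex $a_k$ carries a single $\to$, a single $\gets$, or a $\leftarrow\!\!\bullet\!\!\rightarrow$; and by Proposition \ref{prop:DSSf} a positive root is nondegenerate iff $f'$ does not vanish there. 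Descartes' rule of signs bounds the number of positive roots of $f$ (with multiplicity) by the number of sign changes of $(c_1,\dots,c_m)$, with the same parity, so capacity for MPNE forces an admissible sign assignment with at least two sign changes; conversely, by the sharpness of Descartes' rule for prescribed exponents, any such sign assignment can be realized with two simple positive roots, giving MPNE. Thus capacity for MPNE is equivalent to the existence of an admissible sign assignment with $\ge 2$ sign changes, which a case analysis on the $\to/\gets/\leftarrow\!\!\bullet\!\!\rightarrow$ labels (using Definitions \ref{defn:patterns}, \ref{def:2alt} and Remark \ref{rmk:1alt2alt}) identifies with ${\cal N}\in$\at; this is part~2. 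Part~1 adds the only other way to get more than one equilibrium, namely $f\equiv 0$, which is possible precisely when every complex carries a $\leftarrow\!\!\bullet\!\!\rightarrow$; then Corollary \ref{cor:infeq} gives infinitely many (degenerate) equilibria, and an all-$\leftarrow\!\!\bullet\!\!\rightarrow$ one-species network that is not \at has one or two source complexes (Remark \ref{rmk:1alt2alt}), contributing exactly the diagrams $\leftarrow\!\!\bullet\!\!\rightarrow$ and $(\leftarrow\!\!\bullet\!\!\rightarrow,\leftarrow\!\!\bullet\!\!\rightarrow)$.

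The routine work is the class-collapse in the first plan and the coefficient bookkeeping in the second. The one genuinely delicate step is the sharpness half of Descartes' rule with the exponents held fixed — producing \emph{two transversal} positive crossings, which I would arrange by choosing the magnitudes so that near each of the two sign changes a single dominant binomial controls the sign of $f$ while the remaining monomials are negligible — together with nailing down the combinatorial dictionary identifying $\ge 2$ admissible sign changes with the \at condition.
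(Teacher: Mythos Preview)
The paper does not give its own proof of Theorem~\ref{thm:1species}; it is imported from \cite{JoshiShiu17} and then \emph{used as an input}. Your circularity worry about Plan~1 is therefore justified: the proof of Theorem~\ref{thm:mainext}(3) invokes the corollary ``\at $\Rightarrow$ MPNE'' (stated right after Theorems~\ref{thm:1species} and~\ref{thm:Joshi,Shiu} as their consequence, and applied in Lemmas~\ref{lem:100} and~\ref{lem:02}), and that corollary rests on Theorem~\ref{thm:1species} via inheritance. So Plan~1 cannot stand alone, though your collapse of the classes ${\cal S}_2^z,{\cal L},{\cal C},{\cal Z}$ for $n=1$ is correct and is a good consistency check once Theorem~\ref{thm:mainext} is available.

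Your Plan~2 is the right independent route and is presumably close to the argument in \cite{JoshiShiu17}. The reduction to $f(x)=\sum_k c_k x^{a_k}$ with the stated sign constraints on the $c_k$ is exactly \eqref{eq:sst_0} and Proposition~\ref{prop:DSSf} at $n=1$, and Descartes' rule with multiplicities gives the forward direction of part~2 immediately. Two refinements worth making explicit: (i)~you do not need the full sharpness of Descartes' rule---once an admissible sign pattern contains $+,-,+$ (or $-,+,-$) at three positions $p<q<r$, keep all other admissible coefficients small in absolute value and tune the trinomial $c_p x^{a_p}-|c_q|x^{a_q}+c_r x^{a_r}$ so that it dips below zero between its positive values at $0^+$ and $+\infty$, giving two transversal positive zeros (your dominant-binomial idea does this); (ii)~for part~1 the clean dichotomy is: if some admissible $f\not\equiv 0$ has $\ge 2$ positive roots then Descartes forces $\ge 2$ sign changes, hence the \at pattern and thus MPNE; otherwise MPE can only come from $f\equiv 0$, which requires every complex to carry a $\leftarrow\!\!\bullet\!\!\rightarrow$, and an all-$\leftarrow\!\!\bullet\!\!\rightarrow$ diagram with $\ge 3$ sources already contains $(\to,\gets,\to)$, leaving precisely $\leftarrow\!\!\bullet\!\!\rightarrow$ and $(\leftarrow\!\!\bullet\!\!\rightarrow,\leftarrow\!\!\bullet\!\!\rightarrow)$ as the extra MPE-but-not-MPNE cases.
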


Lemma \ref{cor:31} shows that Theorem 5.2 in \cite{JoshiShiu17} can be rephrased as 

\begin{thm}\cite[Theorems 5.1, 5.2]{JoshiShiu17}\label{thm:Joshi,Shiu}
Let $\mathcal{N}=\{a\to a^{\prime}, b\to b'\}$ be 1-alt complete.  
\begin{enumerate}
    \item[1.] If ${\cal N}\notin {\cal S}_2^z$ then $\cal N$ has the capacity for  MNPE;
    \item[2.] If ${\cal N}\in {\cal S}_2^z$ then $\cal N$ has the capacity for MPE, but does not have the capacity for MPNE.
\end{enumerate}
\end{thm}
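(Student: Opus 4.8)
The plan is to obtain both parts as immediate consequences of Theorem \ref{thm:mainext}; the only real work is to pin down which of the classes making up $\mathcal{D}=\mathcal{S}_2^z\cup\mathcal{L}\cup\mathcal{S}_2^{nz}\cup\mathcal{C}$ a two-reaction, 1-alt complete network can belong to. As in the proof of the Shiu--de Wolff proposition above, I would first reduce to the case where $\mathcal{N}$ is essential: by Proposition \ref{prop:essential} and its corollary, passing from $\mathcal{N}$ to its essential projection $\mathcal{N}_I$ preserves the capacity for MPE and for MPNE, and it preserves 1-alt completeness as well, since the patterns $(\to,\gets)$ and $(\gets,\to)$ can be realized only on a coordinate $i$ with $v_i\neq 0$ on which the source complexes take at least two distinct values, i.e.\ on a coordinate in $I$. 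Moreover, when $\mathcal{N}$ has a unique zigzag 2D projection $\mathcal{N}_{\{i,j\}}$, Lemma \ref{cor:31} gives $a_s=b_s$ off $\{i,j\}$, so $\mathcal{N}_I=\mathcal{N}_{\{i,j\}}$ is genuinely a two-species zigzag; this is what makes it legitimate to phrase the dichotomy in terms of membership in $\mathcal{S}_2^z$. So assume from now on that $\mathcal{N}=\{a\to a',b\to b'\}$ is essential; it has exactly two reactions, hence at most two source complexes, and being 1-alt complete it has two distinct source complexes, each carrying a single reaction.

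Next I would record a simple ``size'' constraint: a two-reaction network cannot lie in $\mathcal{L}$ or in $\mathcal{C}$ (both require at least three source complexes), nor in $\mathcal{S}_2^{nz}$ (which requires two source complexes each with reactions in both directions, i.e.\ four reactions, cf.\ Proposition \ref{prop:s2nz}). Hence for our essential $\mathcal{N}$ one has $\mathcal{N}\in\mathcal{D}$ if and only if $\mathcal{N}\in\mathcal{S}_2^z$. (For orientation: a short check on the signs $\lambda_1 v_i,\lambda_2 v_i$ realizing the two arrow patterns shows that such an $\mathcal{N}$ is always a zigzag network, the zigzag having slope $-1$ precisely in the $\mathcal{S}_2^z$ case; but this is not needed below.)

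Part 2 is then immediate: if $\mathcal{N}\in\mathcal{S}_2^z$ then, being essential, its essential projection is $\mathcal{N}$ itself, which lies in $\mathcal{S}_2^z\subseteq\mathcal{D}$, so Theorem \ref{thm:mainext}(2) shows $\mathcal{N}$ has the capacity for MPE but not for MPNE. For part 1, if $\mathcal{N}\notin\mathcal{S}_2^z$ then $\mathcal{N}\notin\mathcal{D}$ by the previous paragraph, while $\mathcal{N}$ is 1-alt complete, so Theorem \ref{thm:mainext}(3) gives the capacity for MPNE.

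There is no genuinely hard step; the one point needing care is the normalization at the start --- the hypothesis ``$\mathcal{N}\in\mathcal{S}_2^z$'' must be read for the essential projection of $\mathcal{N}$, so that networks carrying redundant species, zero coordinates of $v$, or more than one distinct zigzag 2D projection (such as the third network in Examples \ref{ex:altz}) are classified correctly. Lemma \ref{cor:31} is precisely what makes this normalization line up with the two-species requirement in the definition of the class $\mathcal{S}_2^z$.
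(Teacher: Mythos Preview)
Your derivation is internally sound, but it is circular in the context of this paper: Theorem~\ref{thm:Joshi,Shiu} is not proved here --- it is imported from \cite{JoshiShiu17} and then \emph{used} in the proof of Theorem~\ref{thm:mainext}(3). Specifically, Lemma~\ref{lem:100} invokes Theorem~\ref{thm:Joshi,Shiu} to handle zigzags of slope $\neq -1$, and Corollary~\ref{cor:no2zz2} (used in Proposition~\ref{lem:S2zLproj} and Proposition~\ref{prop:main}) is stated as a consequence of Theorems~\ref{thm:1species} and~\ref{thm:Joshi,Shiu}. So deducing Theorem~\ref{thm:Joshi,Shiu} back from Theorem~\ref{thm:mainext} closes a loop.

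If your intent was only to observe that Theorem~\ref{thm:Joshi,Shiu} is a formal special case of Theorem~\ref{thm:mainext}, that observation is correct and your reduction (eliminate $\mathcal{L}$, $\mathcal{C}$, $\mathcal{S}_2^{nz}$ by counting source complexes and reactions, then read off $\mathcal{D}\cap\{\text{two-reaction networks}\}=\mathcal{S}_2^z$) is clean. But as a \emph{proof} it cannot stand on its own unless you first supply an argument for Theorem~\ref{thm:mainext}(3) that does not already rely on the two-reaction case --- which the paper's argument does.
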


We conclude with a few consequences of Theorems \ref{thm:1species} and \ref{thm:Joshi,Shiu}  used later in our proof.

\begin{cor}\label{cor:2alt}
If $\cal N$ is 2-alternating then $\cal N$ has the capacity for MPNE.
\end{cor}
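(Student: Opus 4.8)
The plan is to reduce immediately to the one-species case via the projection inheritance property. By Definition~\ref{def:2alt}, if $\cal N$ is 2-alternating then some 1D projection ${\cal N}_{\{i\}}$ contains both arrow patterns $(\to,\gets,\to)$ and $(\gets,\to,\gets)$. In particular ${\cal N}_{\{i\}}$ is a reaction network on one species — and it has rank $1$, since containing those patterns forces at least three reactions of alternating directions — so it is a 2-alternating network in the precise sense required by Theorem~\ref{thm:1species}.

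First I would invoke Theorem~\ref{thm:1species}(2) to conclude that the one-species network ${\cal N}_{\{i\}}$ has the capacity for MPNE. Then I would apply the inheritance fact (2) for rank $1$ networks recorded at the beginning of Section~\ref{sec:1Drn} (the specialization of Theorem~\ref{thm:inheritance2} stating that if a projection of $\cal N$ has the capacity for MPNE then so does $\cal N$), with the projection ${\cal N}_{\{i\}}$. This yields that $\cal N$ has the capacity for MPNE, as desired.

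There is essentially no obstacle in this argument; the only thing worth stating explicitly is that the 1D projection witnessing 2-alternation genuinely qualifies as a reaction network on one species to which Theorem~\ref{thm:1species} applies, which is clear from the discussion above. I note that for this particular corollary only Theorem~\ref{thm:1species} is used; Theorem~\ref{thm:Joshi,Shiu} is not needed here, although it is listed alongside it as one of the ingredients available at this point in the paper.
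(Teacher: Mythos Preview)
Your proposal is correct and matches the paper's intended argument: the paper presents this corollary without proof, stating it as an immediate consequence of Theorem~\ref{thm:1species}, and the only way to pass from the one-species projection back to $\cal N$ is precisely the projection inheritance fact you invoke. Your observation that Theorem~\ref{thm:Joshi,Shiu} is not needed here is also correct; that theorem is used for the companion Corollary~\ref{cor:no2zz2}.
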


\begin{cor}\label{cor:no2zz2}
Let $\mathcal{N}=\{a\to a^{\prime}, b\to b'\}$ be 1-alt complete and suppose $\cal N$ has zigzags in two different 2D projections. Then $\cal N$ has the capacity for MPNE.
\end{cor}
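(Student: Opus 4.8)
The plan is to derive this straight from Theorem \ref{thm:Joshi,Shiu}(1), which asserts that a 1-alt complete two-reaction network that is not in ${\cal S}_2^z$ has the capacity for MPNE. Since by hypothesis $\cal N$ is 1-alt complete and consists of exactly the two reactions $a\to a'$ and $b\to b'$, it is enough to verify that ${\cal N}\notin{\cal S}_2^z$.

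To this end, recall that every ${\cal S}_2^z$ network has exactly two species and hence has only one $2$D projection (itself); so it suffices to show that $\cal N$ has at least three species. By Definition \ref{defn:zpattern}, a $2$D projection ${\cal N}_{\{i,j\}}$ of $\cal N$ can contain a zigzag only when $v_iv_j(b_i-a_i)(b_j-a_j)<0$, which in particular forces $a_i\neq b_i$ and $a_j\neq b_j$. Now let ${\cal N}_I$ and ${\cal N}_J$ be two \emph{distinct} $2$D projections of $\cal N$, each containing a zigzag. Then the source complexes $a$ and $b$ disagree on every coordinate lying in $I\cup J$; and since $I$ and $J$ are distinct two-element subsets of the species index set, $|I\cup J|\geq 3$. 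Hence $\cal N$ has at least three species, so ${\cal N}\notin{\cal S}_2^z$, and Theorem \ref{thm:Joshi,Shiu}(1) gives the capacity for MPNE.

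No real obstacle arises here — all the substance is already contained in Theorem \ref{thm:Joshi,Shiu} and in the definition of a zigzag; the only point to be careful about is the membership constraint that ${\cal S}_2^z$ networks live on exactly two species. Alternatively, one may route the argument through Lemma \ref{cor:31}: if $\cal N$ had a zigzag in only one $2$D projection ${\cal N}_{\{i,j\}}$, then $a_s=b_s$ for all $s\notin\{i,j\}$, so $\cal N$ would be a two-species network up to coordinates on which $a$ and $b$ agree; the presence of two distinct $2$D zigzag projections rules this out, again forcing ${\cal N}\notin{\cal S}_2^z$.
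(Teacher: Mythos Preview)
Your proposal is correct and is precisely the derivation the paper has in mind: the corollary is stated without proof as an immediate consequence of Theorem~\ref{thm:Joshi,Shiu}, and your argument that two distinct $2$D zigzag projections force $n\ge 3$ (hence ${\cal N}\notin{\cal S}_2^z$) is the natural way to fill in the one missing line. The alternative route through Lemma~\ref{cor:31} that you mention is also exactly how the paper connects Theorem~\ref{thm:Joshi,Shiu} to the original Joshi--Shiu result, so both paths coincide with the paper's intent.
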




\section{Proofs}\label{sec:proofs}  The proof of parts (1) and (2) of Theorem \ref{thm:mainext} are self-contained and given in sections  \ref{sec:necessary} and \ref{sec:degen}. The proof of part (3) uses results of Joshi and Shiu for one species and two reactions, and it also uses in a critical way a new inheritance theorem due to Banaji et.~al. (Theorem \ref{thm:inheritance2} in this paper). Our original proof of  part 3 used a hands-on analysis of roots of $f(\eta,\kappa;t)$ and provided us a more algebraic insight \cite{Pantea.2022hj}. On the other hand, that proof is longer and less streamlined than the one we give here. We start with a proposition that allows us to assume $v$ has only 1 and -1 entries, and that $\lambda_k \in\{1,-1\}$, which simplifies some our our calculations. 
\begin{prop}\label{reduc to 1} Let $\cal N$ be an essential rank 1 reaction network. Denote $|v|=(|v_1|,\ldots, |v_n|)$ and let $\tilde v=sign(v)$ and $\tilde{\eta}=\frac{\eta}{|v|}$ (componentwise division). 
Let $\tilde \lambda_k=\text{sign}\lambda_k$, $\tilde \lambda_k^+=sign(\lambda_k^+)$, $\tilde \lambda_k^-=sign(\lambda_k^-)$.
If $k\in R\setminus R^0$ we let $\tilde{\kappa}_k=\kappa_k|\lambda_k||v|^{a_k}$, and if $k\in R^0$ we let $\tilde{\kappa}_k^+=\kappa_k^+|\lambda_k^+||v|^{a_k}$ and $\tilde{\kappa}_k^-=\kappa_k^-|\lambda_k^-||v|^{a_k}$. Let
$\tilde{\cal N}$ denote the reaction network with reactions $a_k\to a_k+\tilde\lambda_k\tilde v$ for $k\in R\setminus R^0$ and $a_k\to a_k+\tilde\lambda_k^+\tilde v$, $a_k\to a_k+\tilde\lambda_k^-\tilde v$ for $k\in R^0$. 
Then $\eta+t_0 v$ is a (nondegenerate/degenerate) positive equilibrium of $(\cal N, \kappa)$ if and only if
$\tilde \eta+t_0\tilde v$ is a (nondegenerate/degenerate) positive equilibrium of $(\tilde{\cal N},\tilde\kappa)$.
\end{prop}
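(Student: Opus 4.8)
The plan is to conjugate the mass-action dynamics of $(\mathcal N,\kappa)$ and $(\tilde{\mathcal N},\tilde\kappa)$ by an explicit linear diffeomorphism of the positive orthant. Let $\phi=D_{1/|v|}$, that is $\phi(x)=(x_1/|v_1|,\dots,x_n/|v_n|)$. Since $v$ has no zero coordinate (the network is essential), $\phi$ is a bijection of $\mathbb R^n_{>0}$ onto itself, and $\phi(v)=v/|v|=\operatorname{sign}(v)=\tilde v$, so that $\phi(\eta+tv)=\tilde\eta+t\tilde v$ for every $t$. In particular the open interval $(\alpha^\eta,\beta^\eta)$ cut out by the constraints $\eta+tv>0$ coincides with the interval $(\alpha^{\tilde\eta},\beta^{\tilde\eta})$ cut out by $\tilde\eta+t\tilde v>0$.

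Next I would compare the two vector fields. Write $F(x)=g(x)\,v$ and $F_{\tilde{\mathcal N}}(y)=\tilde g(y)\,\tilde v$, where $g$ and $\tilde g$ are the scalar polynomial parts of the mass-action vector fields of $\mathcal N$ and of $\tilde{\mathcal N}$ (whose reaction vectors are the $\tilde\lambda_k\tilde v$, with rate constants $\tilde\kappa_k$) as in the Notation of Section~\ref{sec:notation}; thus $f(\eta,\kappa;t)=g(\eta+tv)$ and $\tilde f(\tilde\eta,\tilde\kappa;t)=\tilde g(\tilde\eta+t\tilde v)$. Using the monomial identity $\phi(x)^{a_k}=x^{a_k}/|v|^{a_k}$ together with $\tilde\kappa_k=\kappa_k|\lambda_k|\,|v|^{a_k}$ and $\tilde\lambda_k|\lambda_k|=\operatorname{sign}(\lambda_k)\,|\lambda_k|=\lambda_k$ (and the parallel identities $\tilde\kappa_k^{\pm}=\kappa_k^{\pm}|\lambda_k^{\pm}|\,|v|^{a_k}$, $\tilde\lambda_k^{\pm}|\lambda_k^{\pm}|=\lambda_k^{\pm}$ for $k\in R^0$), a term-by-term check gives $\tilde g\circ\phi=g$. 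Substituting $x=\eta+tv$ and using $\phi(\eta+tv)=\tilde\eta+t\tilde v$ yields the key equality $\tilde f(\tilde\eta,\tilde\kappa;t)=f(\eta,\kappa;t)$, an identity of polynomials in $t$ on their common domain.

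The statement then follows formally. Since $\phi$ preserves $\mathbb R^n_{>0}$, the point $\eta+t_0v$ lies in $\mathbb R^n_{>0}$ if and only if $\tilde\eta+t_0\tilde v$ does; by \eqref{eq:sst_0} it is a positive equilibrium of $(\mathcal N,\kappa)$ iff in addition $f(\eta,\kappa;t_0)=0$, equivalently $\tilde f(\tilde\eta,\tilde\kappa;t_0)=0$, equivalently $\tilde\eta+t_0\tilde v$ is a positive equilibrium of $(\tilde{\mathcal N},\tilde\kappa)$. For the degeneracy part I would invoke Proposition~\ref{prop:DSSf} for each network: $\eta+t_0v$ is degenerate iff $f'(\eta,\kappa;t_0)=0$, and $\tilde\eta+t_0\tilde v$ is degenerate iff $\tilde f'(\tilde\eta,\tilde\kappa;t_0)=0$; these are the same condition because $f(\eta,\kappa;\cdot)$ and $\tilde f(\tilde\eta,\tilde\kappa;\cdot)$ are literally the same function, hence have the same derivative at $t_0$.

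There is no substantive obstacle here; the proof is bookkeeping. The two points that need a little care are: (i) checking that $\tilde{\mathcal N}$ is again an essential rank~1 reaction network with stoichiometric subspace $\operatorname{span}(\tilde v)$ and the same source complexes as $\mathcal N$, so that the notation of Section~\ref{sec:notation} and Proposition~\ref{prop:DSSf} legitimately apply to $(\tilde{\mathcal N},\tilde\kappa)$; and (ii) carrying the $k\in R^0$ contributions through every computation in lockstep with the $k\in R\setminus R^0$ contributions.
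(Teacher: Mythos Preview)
Your proposal is correct and follows essentially the same approach as the paper: both establish the identity $f_{\mathcal N}(\eta,\kappa;t)=f_{\tilde{\mathcal N}}(\tilde\eta,\tilde\kappa;t)$ by factoring $|v|^{a_k}$ out of each monomial $(\eta+tv)^{a_k}$ and absorbing it into the new rate constants, then invoke Proposition~\ref{prop:DSSf}. Your diffeomorphism $\phi=D_{1/|v|}$ is just a conceptual wrapper for the same term-by-term computation the paper carries out directly.
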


\begin{proof} We have
\begin{eqnarray*}
f_{\cal N}(\eta,\kappa;t)&=&
\sum_{k\in R\setminus R^0}\lambda_k\kappa_k(\eta+tv)^{a_k}
+\sum_{k\in R^0}(\lambda_k^+\kappa_k^+-\lambda_k^-\kappa_k^-) (\eta+tv)^{a_k}\\
 &=&\sum_{k\in R\setminus R^0}\kappa_k\lambda_k\prod_{i\in S}|v_i|^{a_{ki}}\prod_{i\in S}(\tilde{\eta_i}+t\tilde{v}_i)^{a_{ki}}
 +\sum_{k\in R^0}(\lambda_k^+\kappa_k^+-\lambda_k^-\kappa_k^-) \prod_{i\in S}|v_i|^{a_{ki}}\prod_{i\in S}(\tilde{\eta_i}+t\tilde{v}_i)^{a_{ki}}
 \\
 &=&\sum_{k\in R\setminus R^0}\tilde{\kappa}_k\tilde\lambda_k(\tilde\eta+t\tilde v)^{a_k}+
 \sum_{k\in R^0}(\tilde\lambda_k^+\tilde\kappa_k^+-\tilde\lambda_k^-\tilde\kappa_k^-)(\tilde\eta+t\tilde v)^{a_k}\\
 &=&f_{\tilde {\cal N}}(\tilde\eta,\tilde\kappa;t)
\end{eqnarray*}
and the conclusion follows from Proposition \ref{prop:DSSf}.
\end{proof}



\subsection{Proof of Theorem \ref{thm:mainext} (1): necessary condition for multistationarity}\label{sec:necessary} 
Assume that $\cal N$ is not a single-source network and that it does not have 1D projections containing the arrow pattern $(\rightarrow, \leftarrow)$ (the case when $\cal N$ does not have 1D projections containing the arrow pattern $(\leftarrow, \rightarrow)$ is proved similarly.) We write
\begin{equation}\label{eq: vecor field not A_1}
    f(\eta,\kappa;t)=\sum_{k\in R^+}\kappa_k(\eta+tv)^{a_k}-\sum_{l\in R^-}\kappa_l(\eta+tv)^{a_l}.
\end{equation}
If $R^+=\emptyset$ or $R^-=\emptyset$ then $f(\eta,\kappa;t)$ is either strictly positive or strictly negative on $(\alpha^\eta,\beta^\eta)$ and therefore $f(\eta,\kappa;t)=0$ does not have solutions.

If $R^+\neq\emptyset$ and $R^-\neq\emptyset$, then (see (\ref{eq:><explained})) for any $k\in R^+, l\in R^-$ and $i\in S$  we have 
$(a_{ki}-a_{li})v_i\geq 0.$ In other words for $i\in S^+$ we have 
$\displaystyle\max_{l\in R^-} a_{li}\le \displaystyle\min_{k\in R^+} a_{ki}$ 
and for $i\in S^-$ we have 
$\displaystyle\max_{k\in R^+} a_{ki}\le \displaystyle\min_{l\in R^-} a_{li}.$

Let $\delta\in\mathbb{R}^n$ be such that  that 
\begin{eqnarray}\label{eq:alphachoice}
\underset{l\in R^-}{\max}\ a_{li}\leq\delta_i\leq\underset{k\in R^+}\min\ a_{ki}\; \text{ if } i\in S^+\\\nonumber
\underset{k\in R^+}{\max}\ a_{ki}\leq\delta_i\leq \underset{l\in R^-}{\min}\ a_{li}\; \text { if } i\in S^-.
\end{eqnarray}

Since $\cal N$ has at least two different source complexes there must exist $(k_0,l_0)\in R^+\times R^-$ such that $a_{k_0}\neq a_{l_0}$. Let $i_0\in S$ be such that  $a_{k_0i_0}\neq a_{l_0i_0}$. 
We  refine our choice of $\delta$ (\ref{eq:alphachoice}) to ensure that 
$\delta_{i_0}-a_{k_0i_0}\neq 0$ and $\delta_{i_0}-a_{l_0i_0}\neq 0$.
Setting  (\ref{eq: vecor field not A_1}) equal to zero yields
\begin{equation}\label{eq:1}
\sum_{k\in R^+}\kappa_k(\eta+tv)^{a_k-\delta}=\sum_{l\in R^-}\kappa_l(\eta+tv)^{a_l-\delta}.
\end{equation}
Note that exponents $a_k-\delta$ on the left hand side of (\ref{eq:1}) have coordinates of the same sign or zero, with at least one (i.e. $a_{k_0i}-\delta_i$) being nonzero. The same holds for the right hand side of (\ref{eq:1}), and the sign there is the opposite of the sign in the left hand side. It follows that the two sums in equality $(\ref{eq:1})$ are strictly monotonic functions of $t$ on $(\alpha^\eta, \beta^\eta)$ with different monotonicities. Therefore $(\ref{eq:1})$ has at most one solution in $t$, and ${\cal N}$ does not have the capacity for multiple positive equilibria.     
\subsection{Proof of Theorem \ref{thm:mainext} (2): capacity for MPE, but not MPNE} \label{sec:degen}
\noindent {\bf Case ${\cal N}\in {\cal S}_1$.} Denote the rate constants by $\kappa^+$ and $\kappa^-$. positive Equilibria exist if and only if $\kappa^+=\kappa^-$. In that case $f(\eta,\kappa;t)$ is identically zero for any $\eta$ and all equilibria are degenerate.

\medskip

\noindent {\bf Case ${\cal N}\in {\cal S}_2^{z}$.} $\cal N$ is a zigzag subnetwork of that in Figure \ref{fig:s2z}(i). Consider the maximal configuration first, i.e. let 
${\cal N}=\{a\to a', a\to a'', b\to b', b\to b''\}$, and denote the corresponding rate constants by $\kappa_a^+, \kappa_a^-,\kappa_b^+,\kappa_b^-$ respectively. Let $d=a_2-b_2=b_1-a_1\neq 0$. The equation
$f(\eta,\kappa;t)=(\kappa_a^+-\kappa_a^-)(\eta+tv)^a+(\kappa_b^--\kappa_b^-)(\eta+tv)^b=0$ is equivalent to
\begin{eqnarray}\label{eq:55}\nonumber
h(\eta,\kappa;t)&=&  (\kappa_a^+-\kappa_a^-)+(\kappa_b^--\kappa_b^-)(\eta+tv)^{b-a}\\
&=&(\kappa_a^+-\kappa_a^-)+(\kappa_b^--\kappa_b^-)
\left(\frac{\eta_1+t}{\eta_2+t}\right)^d=0.
\end{eqnarray}
If $\kappa_b^+=\kappa_b^-$ then $h(\eta,\kappa;t)$ has zeros if and only if it is identically zero, in which case $f(\eta,\kappa;t)$ and $\cal N$ has a continuum of positive equilibria, all degenerate. 

If $\kappa_b^+\neq \kappa_b^-$ then if $\eta_1\neq \eta_2$   $h(\eta,\kappa;t)$ is strictly monotonic and has at most one zero. On the other hand, if $\eta_1=\eta_2$ then equation (\ref{eq:55}) has solutions if and only if $(\kappa_a^+-\kappa_a^-)+(\kappa_b^--\kappa_b^-)=0$, in which case
$f(\eta,\kappa;t)$ is identically zero, and $\cal N$ has multiple equilibria, all degenerate. 

This completes the proof for the case of arrow diagram in Figure \ref{fig:s2z}(i). If $\cal N$ is a strict subnetwork of $\{a\to a', a\to a'', b\to b', b\to b''\}$ then calculation above applies as well, by setting the rate constants of missing reactions to zero, and the conclusion stays the same: $\cal N$ has multiple equilibria only on compatibility classes with $\eta_1=\eta_2$, and only if the sum of rate constants for all reactions in one direction is equal to the the sum of rate constants on the other direction (this is possible since we have at least one reaction per direction). All equilibria are degenerate.  

\medskip

\noindent {\bf Case ${\cal N}\in {\cal L}$.}
Let $\delta\in{\mathbb R}^2$ such that 
$$\begin{pmatrix}
       a_{k1}\\a_{k2}
\end{pmatrix}=
\delta+p_k
\begin{pmatrix}
      1\\-1
\end{pmatrix}
\text{ for all } k\in R 
$$
where $p_k<0$ for $R\in R^+\setminus R_0$, $p_k=0$ for $k\in R^0$,  and $p_k>0$ for $k\in R^-\setminus R_0$.

Then 
\begin{equation}\label{eq:classIIf}
 h(\eta,\kappa;t):=\frac{f(\eta,\kappa;t)}{(\eta+tv)^{\delta}}=\sum_{k\in R ^+\setminus R^0}\kappa_k\left(\frac{\eta_1+t}{\eta_2+t}\right)^{p_k}
 +\sum_{k\in R^0}(\kappa_k^+-\kappa_k^-)
 -\sum_{l\in R^-\setminus R^0}\kappa_l\left(\frac{\eta_1+t}{\eta_2+t}\right)^{p_l}
\end{equation}

Since $R^+\setminus R_0$ and $R^-\setminus R_0$ are not both empty  $h(\eta,\kappa;t)$ is a strictly monotonic function if $\eta_1\neq \eta_2$, and $f(\eta,\kappa;t)$ has at most one zero. If $\eta_1=\eta_2$ then  
$h(\kappa ,\eta;t)=\sum_{k\in R^+\setminus R^0}\kappa_k+\sum_{k\in R^0}(\kappa_k^+-\kappa_k^-)-\sum_{l\in R^-\setminus R^0}\kappa_l$. Equilibria exist if and only if  and $h(\eta,\kappa;t)$ is identically zero, All equilibria are degenerate in this case.

\medskip
\noindent {\bf Case ${\cal N}\in {\cal S}_2^{nz}$.} Let ${\cal N}=\{a\to a', a\to a'', b\to b', b\to b''\}$ and denote the corresponding rate constants by $\kappa_a^+, \kappa_a^-,\kappa_b^+,\kappa_b^-$ respectively. 
Suppose $b-a\in{\cal Q}(v)$ (the other case can be reduced to this by changing $v$ to $-v$). Then  

$f(\eta,\kappa;t)=(\kappa_a^+-\kappa_a^-)(\eta+tv)^a+(\kappa_b^--\kappa_b^-)(\eta+tv)^b=0$ is equivalent to
\begin{eqnarray*}
h(\eta,\kappa;t)&=&  (\kappa_a^+-\kappa_a^-)+(\kappa_b^--\kappa_b^-)(\eta+tv)^{b-a}\\
&=&(\kappa_a^+-\kappa_a^-)+(\kappa_b^--\kappa_b^-)\prod_{k\in K^+}(\eta_k+t)^{b_k-a_k}
\prod_{l\in K^-}(\eta_l-t)^{b_l-a_l}=0.
\end{eqnarray*}

Since $b_k-a_k>0$ for all $k\in K^+$ and  $b_l-a_l<0$ for all $l\in K^-$, $h(\eta,\kappa;t)$ is a monotonically increasing function which has zeros if and only if it is identically zero, i.e. when $\kappa_a^+=\kappa_a^-$ and $\kappa_b^+=\kappa_b^-$. In this case $f(\eta,\kappa;t)$ is identically zero and all equilibria of $\cal N$ are degenerate. 

\medskip

\noindent {\bf Case ${\cal N}\in{\cal C}$.} By permuting species we assume $a_k=\gamma+e_k$ for $k\in R=\{1,\ldots, m\}$. Note that $R$ may not index all complexes (if $\gamma$ is a complex as well), but we can still use notation from section \ref{sec:notation}. We assume $\lambda_k, \lambda_k^+, \lambda_k^-\in\{-1,1\}$ (see Proposition \ref{reduc to 1}).

We write 
$$f(\eta,\kappa;t)=\epsilon d_0(\eta+tv)^{\gamma}
+\sum_{k\in R\setminus R^0}\lambda_k\kappa_k(\eta+tv)^{a_k}
+\sum_{k\in R^0}(\kappa_k^+-\lambda_k^-)\lambda_k^+(\eta+tv)^{a_k}.
$$
where  $\epsilon=1$ if $\gamma$ is a source complex of $\cal N$ and $\epsilon=0$ otherwise, and $d_0=\kappa_0\lambda_0$ if $\gamma$ is involved in a single reaction, and  $d_0=\kappa^+_0\lambda^+_0-\kappa^-_0\lambda^-_0$ if two reactions of opposite directions start at $\gamma$. Note that $d_0$ is imposed a sign if there is only one reaction with cource complex at $\gamma$.
Steady states of $\cal N$ correspond to zeros of the function
\begin{equation}
h(\eta,\kappa;t)=\dfrac{f(\kappa,\eta;t)}{(\eta+tv)^\gamma}=
\epsilon d_0
+\sum_{k\in R\setminus R^0}\kappa_k\lambda_k(\eta_k+tv_k)
+\sum_{k\in R^0}(\kappa_k^+-\kappa_k^-)\lambda_k^+(\eta_k+tv_k)=A_0+A_1t
\end{equation}
a polynomial in $t$ of degree at most one. Therefore $\cal N$ has multiple steady states if and only if $h$ is identically zero, in which case all steady states are degenerate. It remains to check that there are choices of $\eta\in{\mathbb R}^n_{>0}$ and of rate constants $\kappa$ such that $h$ is identically zero. 

The following trivial lemma is useful:
\begin{lem}\label{lem:+-}
Suppose $u\in {\mathbb R}^n$ has at least a positive coordinate and at least a negative coordinate. Then 
$$\{u\cdot w|w\in{\mathbb R}^n_{>0}\}=\mathbb R.$$
\end{lem}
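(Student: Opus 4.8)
The plan is to exploit connectedness of the positive orthant. First I would observe that ${\mathbb R}^n_{>0}$ is convex, hence connected, and that the linear map $w\mapsto u\cdot w$ is continuous; consequently its image $\{u\cdot w\mid w\in{\mathbb R}^n_{>0}\}$ is a connected subset of ${\mathbb R}$, i.e. an interval. It therefore suffices to show that this interval is unbounded both above and below, from which it must equal all of ${\mathbb R}$.

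To show the image is unbounded above, I would use a coordinate $i$ with $u_i>0$, which exists by hypothesis. Fixing any $\epsilon>0$, consider the family $w(t)=t\,e_i+\epsilon\sum_{k\neq i}e_k\in{\mathbb R}^n_{>0}$ for $t>0$; then $u\cdot w(t)=u_i t+\epsilon\sum_{k\neq i}u_k\to+\infty$ as $t\to\infty$, since $u_i>0$. Symmetrically, picking a coordinate $j$ with $u_j<0$ (which also exists by hypothesis) and setting $w(t)=t\,e_j+\epsilon\sum_{k\neq j}e_k$ gives $u\cdot w(t)=u_j t+\epsilon\sum_{k\neq j}u_k\to-\infty$. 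Both families remain in the open positive orthant for all $t>0$, so the image is unbounded in both directions, and being an interval it is all of ${\mathbb R}$.

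The only point requiring a moment of care is that one cannot simply take $w=e_i$, since $e_i$ lies on the boundary of the orthant rather than in its interior; the fixed perturbation $\epsilon\sum_{k\neq i}e_k$ repairs this at the cost of an additive constant, which does not affect the limiting behavior as $t\to\infty$. Since the statement is elementary, I expect no genuine obstacle. As an alternative to the connectedness argument, one could instead directly solve $u_iw_i+u_jw_j=c-\epsilon\sum_{k\neq i,j}u_k$ for a prescribed target $c\in{\mathbb R}$ by fixing all coordinates other than $i,j$ at $\epsilon$ and choosing $w_i,w_j>0$ appropriately; this works but requires a small case split on the sign of the right-hand side, so the connectedness approach is the cleaner route.
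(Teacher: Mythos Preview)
Your argument is correct and complete. The paper itself does not prove this lemma at all---it simply labels it ``trivial'' and states it without proof---so there is nothing to compare against beyond noting that your connectedness-plus-unboundedness approach is a clean way to justify what the authors took for granted.
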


We have 
$$A_0=\epsilon d_0+\sum_{k\in R\setminus R^0}\kappa_k\lambda_k\eta_{k}+
\sum_{k\in R^0}(\kappa_k^+-\kappa_k^-)\lambda_k^+\eta_{k}\text {  and  }A_1=\sum_{k\in R\setminus R^0}\kappa_k\lambda_kv_k
+\sum_{k\in R^0}(\kappa_k^+-\kappa_k^-)\lambda_k^+v_k.
$$

We treat the case $R=R^0$ separately: then $R^0$ has at least three elements. Let  $k_1\neq k_2\in R$.
If ${\cal N}$ does not not have a corner source complex (i.e. $\epsilon=1$) then set $\kappa_{k}^+=\kappa_{k}^-$ for all $k\in R$. Suppose $\epsilon =1$. 
Let $\kappa_k^+$, $\kappa_k^-$, $k\in R$ such that $A_1=0$ and such that $\kappa_{k_1}^+\neq \kappa_{k_1}^-$ and $\kappa_{k_2}^+\neq \kappa_{k_2}^-$ (we are not yet assigning these as rate constants). Then we choose any positive vector $\eta$ such that $\sigma=\sum_{k\in R^0}(\kappa_k^+-\kappa_k^-)\lambda_k^+\eta_{k}\neq 0$. If $d_0$ has fixed sign then pick any rate constant $k_0$. If $\sigma$ has opposite sign from that of $d_0$, choose the rate constants for $a_k\to a'^+_k$ and $a_k\to a'^-_k$ to be $\kappa_k^+/|d_0|$ and $\kappa_k^-/|d_0|$. If $\sigma$ has the same sign as $|d_0|$ then choose the rate constants for $a_k\to a'^+_k$ and $a_k\to a'^-_k$ to be $\kappa_k^-/|d_0|$ and $\kappa_k^+/|d_0|$.  

Now suppose $R\setminus R^0$ is nonempty. 

1. First suppose there exist $k,l\in R\setminus R^0$ such that $\lambda_kv_k$ and $\lambda _lv_l$ have different signs. If there are reactions with source complex $\gamma$, set their rate constants to $1$. This defines $d_0$. 
We fix rate constants for source complexes in $R^0$ such that the set
\begin{equation}\label{eq:set}
\{\kappa_k\lambda_k|k\in R\setminus R^0\}\cup\{(\kappa_{k}^+-\kappa_{k}^-)\lambda_{k}^+|k\in R^0\}
\end{equation}
contains elements of both signs. Note that this is automatically satisfied if $R^0=\emptyset$. We apply Lemma \ref{lem:+-} to find $\kappa_k>0$ for each $k\in R\setminus R^0$ such that $A_1=0$, and we apply Lemma \ref{lem:+-} again to find $\eta$ such that $A_0=0$. 

2. Now assume that $\lambda_kv_k$ have the same sign for all $k\in R\setminus R^0$, and assume this is positive (by changing $v$ to $-v$ if necessary). In other words, all reactions with source complexes in $R$ do in the positive direction in all 1D projections. We must have $R^0\neq\emptyset$ since ${\cal N}\in$\ac.
If $\{\lambda_k|k\in R\setminus R^0\}$ do not have the same sign then pick any rate constants for all reactions so that $A_1=0$ and apply Lemma \ref{lem:+-} to find $\eta$ such that $A_0=0$. 

Suppose $\{\lambda_k|k\in R\setminus R^0\}$ have the same sign, and suppose they are positive (by changing the direction of all reactions if necessary).
Then $v_k=1$ for all $k\in R\setminus R^0$. If $R^0$ has at least two elements, then let $k_0\in R^0$ and choose rate constants such that  $(\kappa_{k_0}^+-\kappa_{k_0}^-)\lambda_{k_0}^+<0$. Since there is an additional element of $R^0$ we may choose the remaining rate constants so that $A_1=0$; then we apply Lemma \ref{lem:+-} to find $\eta$ such that $A_0=0$. Suppose $|R_0|=\{k_0\}$ has a single element. If $v_{k_0}=1$ then we can choose rate constants such that $A_1=0$. In particular $(\kappa_{k_0}^+-\kappa_{k_0}^-)\lambda_{k_0}^+<0$ and the set (\ref{eq:set}) has elements of all signs. We apply Lemma \ref{lem:+-} to find $\eta$ such that $A_0=0$. Finally, if $v_{k_0}=-1$ then the 1D projections of the subnetwork $\tilde{\cal N}$ of $\cal N$ that leaves out all reactions from $\gamma$ have the following arrow diagrams: 

\begin{center}
\begin{tikzpicture}[scale=0.6][h]
\begin{scope}
\draw [->, black!60](0,0)--(10,0);
\draw[fill=black] (2.5,0) circle[radius=0.07];
\draw[fill=black] (7.5,0) circle[radius=0.07];
\draw [<->, thick](1,0)--(4,0);
\draw [->, thick](7.5,0)--(9,0);
\draw (2.5,.4) node {\small $\gamma_{k}$}; 
\draw (7.5,.4) node {\small $\gamma_{k}+1$}; 
\draw (10.3,-.3) node[color=black!40] {\scriptsize $k$}; 
\end{scope}
\begin{scope}[xshift=360]
\draw [->, black!60](0,0)--(10,0);
\draw[fill=black] (2.5,0) circle[radius=0.07];
\draw[fill=black] (7.5,0) circle[radius=0.07];
\draw [->, thick](2.5,0)--(1,0);
\draw [<->, thick](9,0)--(6,0);
\draw (2.5,.4) node {\small $\gamma_{k_0}$}; 
\draw (7.5,.4) node {\small $\gamma_{k_0}+1$}; 
\draw (10.3,-.3) node[color=black!40] {\scriptsize $k_0$}; 
\end{scope}
\end{tikzpicture}
\end{center}

$\tilde {\cal N}$ does not have the $(\to,\gets)$ pattern, and therefore there exists a reaction $\gamma\to\gamma+\lambda_0 v$ in $\cal N$ that produces the $(\to,\gets)$ pattern in ${\cal N}_{\{k_0\}}$, i.e. $\lambda_0>0$. Choose any rate constants such that $A_1=0$, and pick any $\eta\in{\mathbb R}^n_{>0}$. The elements of  $(\ref{eq:set})$ are all positive. Letting 
$\kappa_0=\sum_{k\in R\setminus R^0}\kappa_k\lambda_k\eta_{k}+
\sum_{k\in R^0}(\kappa_k^+-\kappa_k^-)\lambda_k^+\eta_{k}$
we have $d_0=-\kappa_0$ and so $A_0=0$. If there are two reactions from $\gamma$ we choose their rate constants such that $\kappa_0^--\kappa_0^+=\kappa_0$. 

\subsection{Proof of Theorem \ref{thm:mainext} (3): networks with the capacity for MPNE}


\subsubsection{Networks on two species} First we prove the statement for network on two species, $n=2$. We begin with a few lemmas; the first is an elementary exercise whose proof is omitted. For simplicity we let ${\cal D}={\cal S}_2^z\cup{\cal L}\cup{\cal C}\cup{\cal S}_2^{nz}$. 

\begin{lem}\label{lem:ginj}
Let $g:{\mathbb R}\to {\mathbb R}$ be a smooth function, and suppose $g(t_0)>0$, $g'(t_0)=0$ and $g''(t_0)\neq 0$. Then for any $\delta>0$ there exist $t_1,t_2\in (t_0-\delta, t_0+\delta)$, $t_1\neq t_2$ such that $g(t_1)=g(t_2)>0$ and $g'(t_1)\neq 0$, $g'(t_2)\neq 0$.
\end{lem}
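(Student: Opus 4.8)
The plan is to read the hypotheses as saying that $t_0$ is a strict local extremum of $g$, and then to produce the two points by a one-variable intermediate value argument carried out separately on the two sides of $t_0$. Since $g$ is smooth and $g'(t_0)=0$, the limit $\lim_{t\to t_0}\frac{g'(t)}{t-t_0}=g''(t_0)$ is a nonzero number, so there is $\rho\in(0,\delta)$ such that $g'(t)$ has the same sign as $g''(t_0)(t-t_0)$ for all $t\in(t_0-\rho,t_0+\rho)\setminus\{t_0\}$. Shrinking $\rho$ further (using $g(t_0)>0$ and continuity of $g$) we may also assume $g>0$ on $[t_0-\rho,t_0+\rho]$. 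Consequently $g$ is strictly monotone on each of $[t_0-\rho,t_0]$ and $[t_0,t_0+\rho]$, with opposite monotonicities, and with $g'\neq0$ throughout $(t_0-\rho,t_0)\cup(t_0,t_0+\rho)$.

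Next I would split into the two mirror-image cases. If $g''(t_0)>0$ then $t_0$ is a strict local minimum: $g$ decreases on $[t_0-\rho,t_0]$ and increases on $[t_0,t_0+\rho]$, so $m:=\min\{g(t_0-\rho),g(t_0+\rho)\}>g(t_0)$. Choosing any $y\in(g(t_0),m)$ and applying the intermediate value theorem on $[t_0-\rho,t_0]$ and on $[t_0,t_0+\rho]$ separately yields $t_1\in(t_0-\rho,t_0)$ and $t_2\in(t_0,t_0+\rho)$ with $g(t_1)=g(t_2)=y$. Then $t_1\neq t_2$, both lie in $(t_0-\delta,t_0+\delta)$, $g(t_1)=g(t_2)=y>g(t_0)>0$, and $g'(t_1),g'(t_2)\neq0$ by the sign information above. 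The case $g''(t_0)<0$ is symmetric: now $t_0$ is a strict local maximum, $M:=\max\{g(t_0-\rho),g(t_0+\rho)\}<g(t_0)$, and $M>0$ since $g>0$ on the whole interval; picking $y\in(M,g(t_0))$ and arguing with the intermediate value theorem on each side gives $t_1,t_2$ with the required properties and $g(t_1)=g(t_2)=y>M>0$.

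I do not anticipate a serious obstacle here — this is the elementary exercise the text alludes to. The only two points deserving a line of care are (i) extracting \emph{strict one-sided monotonicity of $g$} on a full interval from the purely pointwise hypothesis $g''(t_0)\neq0$, which is precisely what the limit $\lim_{t\to t_0}g'(t)/(t-t_0)=g''(t_0)$ supplies, and (ii) ensuring in the local-maximum case that the common value $y$ can be chosen positive, which is handled by shrinking $\rho$ so that $g$ stays positive on $[t_0-\rho,t_0+\rho]$ before selecting $y$.
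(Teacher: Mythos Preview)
Your proof is correct. The paper itself omits the proof of this lemma, calling it ``an elementary exercise whose proof is omitted,'' so there is no argument in the paper to compare against; you have simply filled in the exercise cleanly, and your two points of care (extracting strict one-sided monotonicity from $g''(t_0)\neq0$ via $\lim_{t\to t_0}g'(t)/(t-t_0)$, and ensuring positivity of the common value in the local-maximum case) are exactly the right things to check.
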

\begin{lem}\label{lem:39}
Let $\cal N$ be a rank 1 reaction network on two species. 
\begin{itemize}
    \item[1.] If $\cal N$ has one of the arrow diagram in Figure \ref{fig:MPNEmotifs} (i), (iii), (iv), (v) then $\cal N$ has the capacity for MPNE. 
    \item[2.] If $\cal N$ has the arrow diagram in Figure \ref{fig:MPNEmotifs} (ii) or (v) and ${\cal N}\notin{\cal C}$ then $\cal N$ has capacity for MPNE. 
\end{itemize}

\end{lem}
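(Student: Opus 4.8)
The plan is to reduce everything to explicit computations with the equilibrium function $f(\eta,\kappa;t)$. First I would invoke Proposition \ref{reduc to 1} to assume $v$ has entries in $\{1,-1\}$ and every reaction scalar $\lambda_k$ (resp. $\lambda_k^{\pm}$) lies in $\{1,-1\}$, and then use the substitution ${\cal N}\mapsto-{\cal N}$ together with a permutation of the two species to fix one orientation of each diagram in Figure \ref{fig:MPNEmotifs}. Whenever a diagram is realized by just two reactions --- so that ${\cal N}=\{a\to a',b\to b'\}$ is 1-alt complete and, by hypothesis, not in ${\cal S}_2^z$ --- the conclusion is immediate from Theorem \ref{thm:Joshi,Shiu}(1), so only the diagrams with three or more source complexes need direct treatment. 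For each of those I would first analyze the \emph{maximal} network compatible with the diagram (all allowed reactions present, both reactions at every source complex that can carry a pair), and then recover a strict sub-diagram by setting the extra rate constants to $0$, exactly as in the ${\cal S}_2^z$ case of the proof of part~(2).

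For a fixed diagram I would write out $f(\eta,\kappa;t)$ as in item~7 of the Notation of Section \ref{sec:notation}, put $\delta=(\min_k a_{k1},\min_k a_{k2})$, and factor $f(\eta,\kappa;t)=(\eta+tv)^{\delta}\,h(\eta,\kappa;t)$, where $(\eta+tv)^{\delta}>0$ on $(\alpha^{\eta},\beta^{\eta})$; by Lemma \ref{prop:DSSh} a steady state $\eta+t_0v$ is nondegenerate exactly when $h'(\eta,\kappa;t_0)\ne0$. The key structural observation I expect to establish, case by case, is that $h$ has the shape
\[
h(\eta,\kappa;t)=c\,g_{\eta}(t)+D,
\]
where $c>0$ is a product of rate constants, $g_{\eta}$ is a smooth, nonconstant function of $t$ that does \emph{not} involve $\kappa$ (a combination of powers of $(\eta_1+t)/(\eta_2-t)$ when $v=(1,-1)$, a polynomial in $t$ when $v=(1,1)$), and $D$ is affine in the remaining rate constants. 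The role of the extra structure in each diagram --- the third source complex, the corner complex, or the extra reversible pair --- is precisely that $D$ can be assigned an arbitrary real value by a sign-respecting choice of those rate constants, with Lemma \ref{lem:+-} handling the bookkeeping.

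With this in hand the argument closes as follows. Using the diagram geometry I would choose $\eta\in{\mathbb R}_{>0}^2$ and the rate constants entering $g_{\eta}$ so that $g_{\eta}$ has an interior critical point $t_0\in(\alpha^{\eta},\beta^{\eta})$ with $g_{\eta}''(t_0)\ne0$; concretely this reduces to recognizing $g_{\eta}$ (after possibly replacing $v$ by $-v$) as a sum of two power functions with exponents of opposite sign, hence strictly convex, and enlarging the coordinates of $\eta$ so that $t_0$ falls inside the open interval. After replacing $g_{\eta}$ by $-g_{\eta}$ if necessary so that $g_{\eta}(t_0)>0$, Lemma \ref{lem:ginj} supplies $t_1\ne t_2$ near $t_0$ with $g_{\eta}(t_1)=g_{\eta}(t_2)>0$ and $g_{\eta}'(t_1),g_{\eta}'(t_2)\ne0$. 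Finally I would assign the remaining rate constants so that $D=-c\,g_{\eta}(t_1)$; then $h(\eta,\kappa;t_1)=h(\eta,\kappa;t_2)=0$, so $\eta+t_1v$ and $\eta+t_2v$ are distinct positive equilibria in the same compatibility class, while $h'(\eta,\kappa;t_i)=c\,g_{\eta}'(t_i)\ne0$ makes both nondegenerate; hence ${\cal N}$ has the capacity for MPNE.

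For the two diagrams in part~2, the hypothesis ${\cal N}\notin{\cal C}$ is exactly what makes $g_{\eta}$ genuinely nonlinear: if all source complexes had the form $\gamma+e_i$, then every monomial of $h$ would have total degree at most $1$, so $g_{\eta}$ would be affine in $t$ and no critical point with $g_{\eta}''\ne0$ could exist --- in agreement with part~(2), which shows ${\cal C}$ networks have no capacity for MPNE. I expect the main obstacle to be the uniform bookkeeping across the five diagrams and their sub-diagrams: checking both that $D$ is genuinely freely assignable (tracking the signs of the $\lambda$'s so that the corner or reversible-pair terms can be made positive or negative at will) and that the $\eta$ chosen to force $g_{\eta}''(t_0)\ne0$ is still compatible with $t_0\in(\alpha^{\eta},\beta^{\eta})$.
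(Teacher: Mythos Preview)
Your overall strategy matches the paper's: factor $f(\eta,\kappa;t)=(\eta+tv)^{b}\bigl(\kappa_2-g(t)\bigr)$ (with $b$ the ``middle'' source complex), use Lemma \ref{prop:DSSh} to reduce nondegeneracy to $g'(t_i)\ne0$, manufacture a critical point of $g$ with $g''\ne0$, apply Lemma \ref{lem:ginj}, and then set $\kappa_2=g(t_1)$. The paper does exactly this, case by case, with explicit numerical choices of $\eta$ and of the two rate constants entering $g$, computing $g(0),g'(0),g''(0)$ by hand in each diagram.

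Where your sketch drifts from what actually works is in the attempted uniform description of $g_\eta$. It is not true that $g_\eta$ is ``a polynomial in $t$ when $v=(1,1)$'': in diagram (i) one has $v=(1,1)$ and $g(t)=\kappa_1\bigl((\eta_1+t)/(\eta_2+t)\bigr)^{p}+\kappa_3(\eta_2+t)^{q}$ with $p<0$, which is neither a polynomial nor a sum of two pure power functions. Likewise the ``exponents of opposite sign, hence strictly convex'' heuristic fails: in case (iv) both exponents are negative, and in case (i) the paper finds $g''(0)<0$, not $>0$. So the existence of a nondegenerate critical point cannot be read off from a convexity principle; the paper obtains it by choosing specific $\eta$ (e.g.\ $(2,4)$, $(4,2)$, or $(1,1)$) and specific $\kappa_1,\kappa_3$ tailored to each diagram, then checking $g''(0)\ne0$ directly. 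In (ii) and (v) this computation is precisely where the ${\cal C}$ exception appears: $g''(0)$ vanishes exactly when $p+q=-2$ (resp.\ $p+q=2$), i.e.\ when the corner condition holds.

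Two smaller points: all five diagrams already have three source complexes and three irreversible reactions, so the detours through Theorem \ref{thm:Joshi,Shiu}, ``maximal'' networks with reversible pairs, and Lemma \ref{lem:+-} are unnecessary here---the free constant $D$ is simply $-\kappa_2$, and positivity of $\kappa_2$ is exactly why one needs $g(t_1)>0$ from Lemma \ref{lem:ginj}.
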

\begin{figure}[h]
\centering
\begin{tikzpicture}[scale=.7]
\begin{scope}
\draw[fill=black] (2,0) circle[ radius=0.05];
\draw[fill=black] (0,2) circle[ radius=0.05];
\draw[fill=black] (2,3) circle[ radius=0.05];
\draw [->,  thick](0,2)--(.4,2.4);
\draw [->,  thick](2,0)--(1.6,-.4) ;
\draw [->,  thick](2,3)--(2.4,3.4);
\draw[color=black!40] (2,0)--(0,0);
\draw[color=black!40] (0,2)--(0,0);
\draw[color=black!40] (2,0)--(2,3.5);
\draw[thick,color=black!40] (0.96,-.13)--(.96,.13);
\draw[thick,color=black!40] (1.04,-.13)--(1.04,.13);
\draw[thick,color=black!40] (-.13,0.96)--(.13,.96);
\draw[thick,color=black!40] (-.13,1.04)--(.13,1.04);
\draw (-.2,2) node {\small $a$};
\draw (2.2,-.2) node {\small $b$};
\draw (2.2,3) node {\small $c$};
\draw (-1,2.5) node {\small $(i)$};
\end{scope}
\begin{scope}[xshift=140]
\draw[fill=black] (2,0) circle[ radius=0.05];
\draw[fill=black] (0,2) circle[ radius=0.05];
\draw[fill=black] (-1,0) circle[ radius=0.05];
\draw [->,  thick](0,2)--(.4,2.4);
\draw [->,  thick](2,0)--(1.6,-.4) ;
\draw [->,  thick](-1,0)--(-.6,.4);
\draw[color=black!40] (2,0)--(2,2);
\draw[color=black!40] (0,2)--(2,2);
\draw[color=black!40] (2,0)--(-1.5,0);
\draw[thick,color=black!40] (0.96,2-.13)--(.96,2+.13);
\draw[thick,color=black!40] (1.04,2-.13)--(1.04,2+.13);
\draw[thick,color=black!40] (2-.13,0.96)--(2.13,.96);
\draw[thick,color=black!40] (2-.13,1.04)--(2.13,1.04);
\draw (-.2,2) node {\small $a$};
\draw (2.2,-.2) node {\small $b$};
\draw (-1,-.2) node {\small $c$};
\draw (-1,2.5) node {\small $(ii)$};
\end{scope}
\begin{scope}[xshift=280]
\draw[fill=black] (0,3) circle[ radius=0.05];
\draw[fill=black] (0,0) circle[ radius=0.05];
\draw[fill=black] (2,3) circle[ radius=0.05];
\draw [->,  thick](0,3)--(-.4,2.6);
\draw [->,  thick](0,0)--(.4,.4) ;
\draw [->,  thick](2,3)--(2.4,3.4);
\draw[color=black!40] (0,3)--(2,3);
\draw[color=black!40] (0,0)--(0,3);
\draw (-.2,3.2) node {\small $b$};
\draw (-.2,-.2) node {\small $c$};
\draw (2.2,3) node {\small $a$};
\draw (-1,2.5) node {\small $(iii)$};
\end{scope}
\begin{scope}[xshift=420]
\draw[fill=black] (0,3) circle[ radius=0.05];
\draw[fill=black] (2,3) circle[ radius=0.05];
\draw[fill=black] (2,0) circle[ radius=0.05];
\draw [->,thick](0,3)--(.4,2.6);
\draw [->,thick](2,0)--(2.4,-.4) ;
\draw [->,thick](2,3)--(1.6,3.4);
\draw[color=black!40] (0,3)--(2,3)--(2,0);
\draw (-.2,2.8) node {\small $a$};
\draw (2.2,2.8) node {\small $b$};
\draw (1.8,-.2) node {\small $c$};
\draw (-1,2.5) node {\small $(iv)$};
\end{scope}
\begin{scope}[xshift=560]
\draw[fill=black] (0,3) circle[ radius=0.05];
\draw[fill=black] (0,0) circle[ radius=0.05];
\draw[fill=black] (2,0) circle[ radius=0.05];
\draw [->,  thick](0,3)--(.4,2.6);
\draw [->,  thick](0,0)--(-.4,.4) ;
\draw [->,  thick](2,0)--(2.4,-.4);
\draw[color=black!40] (0,3)--(0,0)--(2,0);
\draw (-.2,-.2) node {\small $b$};
\draw (-.2,3) node {\small $c$};
\draw (2,.2) node {\small $a$};
\draw (-1,2.5) node {\small $(v)$};
\end{scope}
\end{tikzpicture}
\caption{Arrow diagrams of networks on two species with capacity for MPNE. $(i):$ line $ab$ has slope -1, $c_1=b_1$, $c_2>b_2$. $(ii):$ line $ab$ has slope -1, $c_2=b_2$, $c_1<b_1$. $(iii):$ $b_1=c_1$, $a_2=b_2$, $a_1>b_1$, $c_2<b_2$. $(iv):$ $b_1=c_1$, $a_2=b_2$, $a_1<b_1$, $c_2<b_2$}\label{fig:MPNEmotifs}
\end{figure}
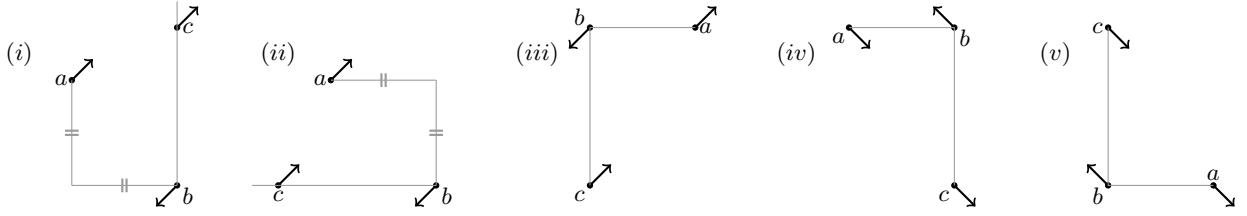

\begin{proof}
$(i)$ Let $p=a_1-b_1=b_2-a_2<0$, $q=c_2-b_2>0$ and let $\kappa_1$, $\kappa_2$, $\kappa_3$ denote the rate constants of $a\to a'$, $b\to b'$, $c\to c'$. Then 
$$
f(\eta,\kappa;t)=
(\eta_1+t)^{b_1}(\eta_2+t)^{b_2}\left(\kappa_1\left(\frac{\eta_1+t}{\eta_2+t}\right)^p
-\kappa_2
+\kappa_3(\eta_2+t)^{q}\right).
$$
Let 
$$g(t)=\kappa_1\left(\frac{\eta_1+t}{\eta_2+t}\right)^p
+\kappa_3(\eta_2+t)^{q}.$$
Note that $\eta+t_1v$, $\eta+t_2v$ are positive equilibria of $({\cal N},\kappa)$ if and only if $g(t_1)=g(t_2)>0$ (this will be the value of $\kappa_2$), and moreover they are nondegenerate if and only of $g'(t_1)$, $g'(t_2)$ are nonzero (lemma \ref{prop:DSSh}). 

We have
$$g'(t)=\kappa_1p (\eta_2-\eta_1)\frac{(\eta_1+t)^{p-1}}{(\eta_2+t)^{p+1}}+\kappa_3q(\eta_2+t)^{q-1}.$$
Set $\eta=(2,4)$ and $\kappa_1=q$, $\kappa_3=-p2^{-p-2q}$, so that $g'(0)=0$. We calculate 
$$g''(0)=pq(p-q-2)2^{-p-4}<0$$
Since $g(0)>0$ Lemma \ref{lem:ginj} applies, and we may pick $t_1\neq t_2$ close to 0 such that $g(t_1)=g(t_2)>0$ and $g'(t_1)$, $g'(t_2)$ are both nonzero. We set $\kappa_2=g(t_1)$ and lemma \ref{prop:DSSh} implies that $\eta+t_1v$, $\eta+t_2v$ are compatible nondegenerate equilibria of $({\cal N},\kappa).$

\smallskip
$(ii)$ Let $p=a_1-b_1=b_2-a_2<0$, $q=c_2-b_2<0$. The formula for $f$ and $g$ are the same as in $(i)$. Setting $\eta=(4,2)$ $\kappa_1=-q$, $\kappa_3=-p2^{p-2q}$ we obtain $g(0)=0$ and $g''(0)=(-p-q-2)2^{p-4}.$  Therefore if $p+q\neq -2$ Lemma \ref{lem:ginj} applies as above and $\cal N$ has the capacity for MPNE. If $p+q=-2$ then $p=-1$ and $q=-1$, i.e. ${\cal N}\in{\cal C}$ with corner $c$, Figure \ref{fig:C2D}(i).

\smallskip
$(iii)$ Let $p=a_1-b_1>0$ and $q=c_2-b_2<0$. We have 
$$f(\eta,\kappa;t)=(\eta_1+t)^{b_1}(\eta_2+t)^{b_2}(\kappa_1 (\eta_1+t)^p-k_2+\kappa_3 (\eta_2+t)^q)$$
and let $g(t)=\kappa_1 (\eta_1+t)^p+\kappa_3 (\eta_2+t)^q$. Set $\eta=(1,1)$ and $k_1=-q$, $k_3=p$. Then $g(0)>0$, $g'(0)=0$ and $g''(0)=pq(q-p)<0$. Therefore Lemma $\ref{lem:ginj}$ applies as above and ${\cal N}$ has the capacity for MPNE. 

\smallskip
$(iv)$ Let $p=a_1-b_1<0$ and $q=c_2-b_2<0$; We have
$$f(\eta,\kappa;t)=(\eta_1+t)^{b_1}(\eta_2-t)^{b_2}(\kappa_1 (\eta_1+t)^p-k_2+\kappa_3 (\eta_2-t)^q)$$
and let $g(t)=\kappa_1 (\eta_1+t)^p+\kappa_3 (\eta_2-t)^q.$ Set $\eta=(1,1)$ and $k_1=-q$, $k_3=-p$. Then $g(0)>0$, $g'(0)=0$ and $g''(0)=-pq(q+p-2)>0$. Therefore Lemma $\ref{lem:ginj}$ applies as above and ${\cal N}$ has the capacity for MPNE. 

\smallskip
$(v)$ Let $p=a_1-b_1<0$ and $q=c_2-b_2<0$. We have $p=a_1-b_1>0$ and $q=c_2-b_2>0$; $f$ and $g$ are as in $(iv)$. Set $\eta=(1,1)$ and $k_1=q$, $k_3=p$. Then $g(0)>0$, $g'(0)=0$ and $g''(0)=-pq(q+p-2)$. If $p+q-2\neq 0$ then ${\cal N}$ has the capacity for MPNE by Lemma $\ref{lem:ginj}$. If $p+q=2$ then $p=1$ and $q=1$ and $\{a\to a', b\to b', c\to c'\}\in{\cal C}$ with corner $b$, see Figure \ref{fig:C2D}(ii).

\end{proof}
\begin{lem}\label{lem:100}
Suppose ${\cal N}$ is a zigzag network on two species composed of three reactions whose source complexes are not collinear. Then either $\cal N$ has the capacity for MPNE or ${\cal N}\in{\cal C}$.
\end{lem}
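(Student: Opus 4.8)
The plan is to reduce, via inheritance together with the Joshi--Shiu classification of two-reaction networks, to a single normalized configuration in which the zigzag has slope $-1$, and then to exploit the third reaction directly.

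First I would dispose of the ``easy'' cases. If some two-reaction subnetwork of $\cal N$ is 1-alt complete and \emph{not} in ${\cal S}_2^z$, then by Theorem~\ref{thm:Joshi,Shiu} it has the capacity for MPNE, and hence so does $\cal N$. So I may assume every 1-alt complete two-reaction subnetwork of $\cal N$ lies in ${\cal S}_2^z$; in particular the zigzag witnessing ${\cal N}\in{\cal Z}$, say $\{a\to a',b\to b'\}$, has slope $-1$. Using the symmetries $\cal N\mapsto-\cal N$, transposition of the two species, and a translation of all source complexes by a fixed vector (which multiplies $f(\eta,\kappa;t)$ by the nowhere-zero factor $(\eta+tv)^{-\gamma}$ and hence preserves the capacity for MPE/MPNE by Lemma~\ref{prop:DSSh} and Proposition~\ref{reduc to 1}), I normalize so that $v=(1,1)$, the source complexes $a,b$ lie on a line of slope $-1$ with $a$ above-left of $b$, and the reactions $a\to a'$, $b\to b'$ point up-right and down-left respectively -- the arrow diagram of Figure~\ref{fig:s2z}(iii). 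Let $c$ be the third source complex; since the sources are not collinear, $c$ is off that line. The only nontrivial symmetry fixing this normalized configuration is ``transpose and then reverse all arrows'', which acts by $(c_1,c_2,\lambda_c)\mapsto(c_2,c_1,-\lambda_c)$, so I may further assume $\lambda_c>0$, i.e.\ $c\to c'$ points up-right.

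Next I would look for a second zigzag. Since $c\to c'$ and $b\to b'$ have opposite directions, $\{b\to b',c\to c'\}$ is a zigzag whenever $(c_1-b_1)(c_2-b_2)<0$; it cannot have slope $-1$ (that would place $c$ on the line through $a$ and $b$), so it is not in ${\cal S}_2^z$ and Theorem~\ref{thm:Joshi,Shiu} again yields MPNE. Hence I may assume $(c_1-b_1)(c_2-b_2)\ge 0$, i.e.\ $c-b$ has both coordinates $\ge 0$ or both $\le 0$. Now I factor $f(\eta,\kappa;t)$: by Proposition~\ref{reduc to 1} take $\lambda_a=\lambda_c=1$, $\lambda_b=-1$; pull out the common monomial $(\eta_1+t)^{m_1}(\eta_2+t)^{m_2}$ and divide the resulting trinomial by the monomial coming from $b$. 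Using $a-b=(-d,d)$ with $d=b_1-a_1\ge 1$ (slope $-1$) and Lemma~\ref{prop:DSSh}, the steady states of $\cal N$ correspond to solutions of $g(t)=\kappa_b$, and their degeneracy to $g'(t)=0$, where
\begin{equation*}
g(t)=\kappa_a\Bigl(\frac{\eta_2+t}{\eta_1+t}\Bigr)^{d}+\kappa_c\,(\eta_1+t)^{r}(\eta_2+t)^{s},\qquad r=c_1-b_1,\ \ s=c_2-b_2,
\end{equation*}
with $d\ge 1$ and $(r,s)$ both $\ge 0$ or both $\le 0$, not both $0$. Note $g>0$ throughout its domain, which contains $0$. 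Taking $\eta_1<\eta_2$ in the first case and $\eta_1>\eta_2$ in the second, and then fixing the ratio $\kappa_a:\kappa_c$, I can force $g'(0)=0$; by Lemmas~\ref{lem:ginj} and~\ref{prop:DSSh} it then suffices to check $g''(0)\ne 0$ to conclude that $\cal N$ has the capacity for MPNE, exactly as in the proof of Lemma~\ref{lem:39} (and when $r$ or $s$ vanishes, i.e.\ $c$ lies on a coordinate line through $b$, one may instead invoke the relevant case of Lemma~\ref{lem:39} directly).

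The crux -- and the main obstacle -- is the final claim: for every admissible $(d,r,s)$ one can arrange $g''(0)\ne 0$, \emph{except} for one value, which turns out to produce a corner network. Differentiating $g$ twice and substituting the chosen $\eta$ and ratio $\kappa_a:\kappa_c$, the only way $g''$ is forced to vanish wherever $g'$ does is when $g$ is, up to an additive constant, of the form $\mathrm{const}/(\eta_1+t)$, and a short check shows this happens precisely when $d=1$ and $(r,s)=(-1,0)$. In that case $b=a+(1,-1)$ and $c=a+(0,-1)$, so with $\gamma=c$ one has $a=\gamma+e_2$, $b=\gamma+e_1$, hence ${\cal N}\in{\cal C}$ (arrow diagram of Figure~\ref{fig:C2D}(ii)); in all other cases $\cal N$ has the capacity for MPNE. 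The delicate work is the bookkeeping: enumerating the admissible triples $(d,r,s)$, keeping track of which symmetries have been used up, and confirming that the unique ``degenerate'' triple is exactly the one giving a corner network -- which is why the statement must allow the alternative ${\cal N}\in{\cal C}$.
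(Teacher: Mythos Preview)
Your approach is correct and reaches the same conclusion as the paper, but the two arguments are organised differently. After the common reduction to a slope~$-1$ zigzag $\{a\to a',b\to b'\}$ with $v=(1,1)$ and $\lambda_c=+1$, the paper immediately invokes the 2-alternating criterion (Corollary~\ref{cor:2alt}) to discard the cases $c_1>b_1$ or $c_2<b_2$, then observes that $c_1<b_1$ together with $c_2>b_2$ produces a second zigzag of slope $\ne -1$; this leaves precisely the two boundary configurations $c_1=b_1,\ c_2>b_2$ and $c_2=b_2,\ c_1<b_1$, which are exactly the arrow diagrams of Figure~\ref{fig:MPNEmotifs}(i),(ii), handled by Lemma~\ref{lem:39}. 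Your route bypasses the 2-alternating shortcut entirely and treats all remaining positions of $c$ through one unified computation with $g(t)$, using Lemma~\ref{lem:ginj}. This is legitimate, and your central claim --- that the only integer triple $(d,r,s)$ with $d\ge1$, $r,s$ of the same (weak) sign and not both zero, for which $g''(0)$ is forced to vanish whenever $g'(0)=0$, is $(1,-1,0)$ --- is true. The clean way to see it is to note that $g'(0)=0$ and $g''(0)=0$ are two linear equations in $(\kappa_a,\kappa_c)$, so the failure condition is $h_1'(0)h_2''(0)-h_1''(0)h_2'(0)\equiv 0$ as a function of $\eta$; expanding gives a quadratic $P(\eta_1,\eta_2)$ with coefficients $r(r+d)$, $s(s-d)$, $2rs+(s-r)d+(r+s)$, and setting all three to zero forces $(d,r,s)=(1,-1,0)$. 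Your heuristic ``$g$ is, up to an additive constant, of the form $\mathrm{const}/(\eta_1+t)$'' captures the right intuition but is not itself a proof; the coefficient computation is what actually closes the argument. The paper's version is shorter because the 2-alternating reduction prunes away all the interior cases $rs>0$ before any computation is needed; your version trades that structural step for a slightly longer but self-contained calculation.
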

\begin{proof}
If $\cal N$ has a zigzag of slope different from -1, then $\cal N$ has the capacity for MPNE by Theorem \ref{thm:Joshi,Shiu}. Suppose ${\cal N}=\{a\to a',b\to b', c\to c'\}$ and that $\{a\to a',b\to b'\}$ form a zigzag of slope -1 like in Figure $\ref{fig:s2z}$(iii). Suppose $c\to c'$ has the same direction as $a\to a'$ (we may permute coordinates and change the directions of all arrows to arrive at this case). Set $v=(1,1)$. If $c_1>b_1$ or $c_2<b_2$ then ${\cal N}$ is 2-alternating, and therefore has the capacity for MPNE by corollary \ref{cor:2alt}. We assume $c_1\le b_1$ and $c_2\ge b_2$.  If $c_1<b_1$ and $c_2>b_2$ then $\{b\to b', c\to c'\}$ form a zigzag of slope  different from -1, and $\cal N$ has the capacity for MPNE by Theorem \ref{thm:Joshi,Shiu}.
The remaining cases have the arrow diagrams in Figure \ref{fig:MPNEmotifs}(i),(ii) and the conclusion follows from lemma \ref{lem:39}.
\end{proof}

\begin{lem}\label{lem:02}
Let ${\cal N}$ be a rank 1 reaction network on two species composed of three reactions. 
If ${\cal N}\in$\ac$\setminus{\cal Z}$ then either $\cal N$ has the capacity for MPNE or ${\cal N}\in{\cal C}$. 
\end{lem}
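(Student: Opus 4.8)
The plan is to put $\mathcal N$ into a convenient normal form and then reduce the whole statement to an elementary analysis of a one–variable function. By Proposition \ref{reduc to 1} we may assume $v$ has entries in $\{-1,1\}$ and every $\lambda_k\in\{-1,1\}$, and (by the choice of spanning vector) that $v_1=1$, so $v\in\{(1,1),(1,-1)\}$; throughout we use freely that permuting the two species and replacing $\mathcal N$ by $-\mathcal N$ preserve both ``capacity for MPNE'' and membership in $\mathcal C$. First I would rule out fewer than three source complexes: with three reactions the only alternative is two source complexes, one of them carrying a pair of opposite reactions, and inspecting the possible $1$D arrow diagrams $(\leftarrow\!\!\bullet\!\!\rightarrow,\to)$, $(\leftarrow\!\!\bullet\!\!\rightarrow,\gets)$ and their mirror images shows each $1$D projection then contains at most one of $(\to,\gets)$, $(\gets,\to)$, so matching the two projections forces the two source complexes to differ in both coordinates with a pair of opposite reactions — a zigzag, contradicting $\mathcal N\notin\mathcal Z$. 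So $\mathcal N=\{a\to a',\,b\to b',\,c\to c'\}$ with three distinct sources. Since $\mathcal N$ is $1$-alt complete the $\lambda_k$ cannot all have one sign (else every $1$D projection has all arrows pointing the same way), so after $\mathcal N\mapsto-\mathcal N$ and relabelling I may take $\lambda_a=\lambda_c=1$, $\lambda_b=-1$; write $\mu=a-b$, $\nu=c-b$, distinct nonzero integer vectors.

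Next I would pass to the reduced function. Factoring out the monomial of $b$, $f(\eta,\kappa;t)=(\eta+tv)^{b}\bigl(G(t)-\kappa_b\bigr)$ on $(\alpha^\eta,\beta^\eta)$, where $G(t)=\kappa_a(\eta+tv)^{\mu}+\kappa_c(\eta+tv)^{\nu}$ does not involve $\kappa_b$; by Lemma \ref{prop:DSSh}, $\eta+t_0v$ is a (nondegenerate) positive equilibrium of $(\mathcal N,\kappa)$ exactly when $G(t_0)=\kappa_b$ (and $G'(t_0)\ne0$). The key structural input is that $\mathcal N\notin\mathcal Z$ means neither $\{a,b\}$ nor $\{c,b\}$ is a zigzag, i.e.\ $v_1v_2\mu_1\mu_2\ge0$ and $v_1v_2\nu_1\nu_2\ge0$; differentiating $(\eta+tv)^{\mu}$ one sees this is precisely the condition that $\phi_1(t):=(\eta+tv)^{\mu}$ and $\phi_2(t):=(\eta+tv)^{\nu}$ are each strictly monotone on $(\alpha^\eta,\beta^\eta)$, for every $\eta\in\mathbb R^2_{>0}$. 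A short arrow-diagram argument then shows that if $\phi_1,\phi_2$ were monotone in the same direction, every $1$D projection of $\mathcal N$ would contain only one of $(\to,\gets)$, $(\gets,\to)$, contradicting $1$-alt-completeness; so, after swapping $a\leftrightarrow c$ if necessary, $\phi_1$ is strictly increasing and $\phi_2$ strictly decreasing.

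The heart of the argument is the following dichotomy for the positive continuous function $\rho(t)=\phi_1'(t)/\bigl(-\phi_2'(t)\bigr)$ on $(\alpha^\eta,\beta^\eta)$. If $\rho$ is non-constant, choose $t_0$ with $\rho'(t_0)\ne0$ and rate constants with $\kappa_c/\kappa_a=\rho(t_0)$; then $G'(t_0)=\kappa_a\phi_1'(t_0)+\kappa_c\phi_2'(t_0)=0$, a one-line computation relating $\rho'$ to $\kappa_a\phi_1''+\kappa_c\phi_2''$ gives $G''(t_0)\ne0$, and $G(t_0)>0$; so Lemma \ref{lem:ginj} produces $t_1\ne t_2$ near $t_0$ with $G(t_1)=G(t_2)>0$ and $G'(t_1),G'(t_2)\ne0$, and putting $\kappa_b:=G(t_1)$ yields, via the previous paragraph, two compatible nondegenerate positive equilibria, so $\mathcal N$ has the capacity for MPNE. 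It remains to check that $\rho$ is constant for every $\eta$ only when $\mathcal N\in\mathcal C$: writing $\phi_1',\phi_2'$ out, constancy forces both monomials to be affine in $t$ with opposite slopes, which — using $v\in\{(1,1),(1,-1)\}$ and $\mu,\nu\ne0$ — happens exactly when $v=(1,-1)$ and $\{\mu,\nu\}=\{e_1,e_2\}$, i.e.\ $\{a,b,c\}=\{b,\,b+e_1,\,b+e_2\}$; invoking $1$-alt-completeness once more pins the corner to $b$, so this is precisely the case $\mathcal N\in\mathcal C$, and there $f(\eta,\kappa;t)$ is affine in $t$ so nothing has to be proved.

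The main obstacle is the bookkeeping hidden in the two arrow-diagram facts flagged above: (i) that a three-reaction, two-source, $1$-alt complete network must be a zigzag, and (ii) that $1$-alt-completeness forces $\phi_1,\phi_2$ to have opposite monotonicity, and in the ``affine'' case forces the corner to be $b$. One must also take care with configurations where a single $1$D projection is already $1$-alt complete (necessarily of type $(\to,\gets,\to)$ or $(\gets,\to,\gets)$, since $\mathcal N$ has only three reactions) and with collinear source complexes, and check that these are not genuine exceptions — the $G(t)$ analysis is designed to absorb them, since for $v=(1,1)$ the exponents satisfy $\mu_1+\mu_2\ge1$ and $\nu_1+\nu_2\le-1$, which already keeps $\rho$ non-constant. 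A less unified but more elementary alternative would be to push the case analysis all the way down to the explicit arrow diagrams of Figure \ref{fig:MPNEmotifs}(iii),(iv),(v) together with the $2$-alternating and corner cases, and then quote Lemma \ref{lem:39} and Corollary \ref{cor:2alt}.
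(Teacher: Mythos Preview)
Your approach is genuinely different from the paper's and, modulo one underspecified step, correct. The paper proceeds by brute case analysis: it fixes $\lambda_a=\lambda_c=1$, $\lambda_b=-1$, splits on $v=(1,1)$ versus $v=(1,-1)$, uses the non-zigzag condition together with arrow-diagram reasoning to force the configuration into one of Figure~\ref{fig:MPNEmotifs}(iii),(iv),(v) (or into \at, or into $\mathcal C$), and then simply quotes Lemma~\ref{lem:39} and Corollary~\ref{cor:2alt}. Your route is more unified: you factor out $\kappa_b$ to isolate $G(t)=\kappa_a\phi_1+\kappa_c\phi_2$, read the non-zigzag condition as strict monotonicity of each $\phi_i$, read 1-alt-completeness as forcing opposite monotonicities, and then run a single dichotomy on the ratio $\rho=\phi_1'/(-\phi_2')$. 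What you gain is that the three explicit computations of Lemma~\ref{lem:39}(iii)--(v) are replaced by one invocation of Lemma~\ref{lem:ginj}; what the paper gains is that no structural lemma about $\rho$ is needed.

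The one step that is not yet a proof is the claim that ``constancy forces both monomials to be affine in $t$.'' This is correct but needs an argument; the phrase ``writing $\phi_1',\phi_2'$ out'' does not supply one. A clean way to fill it: $\rho\equiv C$ means $\phi_1+C\phi_2$ is constant on $(\alpha^\eta,\beta^\eta)$. For $v=(1,1)$ the sum tends to $+\infty$ as $t\to\infty$ since $\mu_1+\mu_2\ge1$ and $\nu_1+\nu_2\le-1$, so $\rho$ is never constant. For $v=(1,-1)$, examine the boundary $t\to-\eta_1^+$ and $t\to\eta_2^-$: boundedness of $\phi_1+C\phi_2$ forces $\nu_1=0$ and $\mu_2=0$, whence $u^{\mu_1}+C(s-u)^{\nu_2}$ is constant in $u$ for every $s$, and differentiating gives $\mu_1=\nu_2=1$. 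You flagged this as ``the main obstacle,'' and indeed it is the only place your sketch is not self-contained; once you insert these four lines the proof is complete and arguably cleaner than the paper's. Your preliminary reduction to three distinct source complexes is also correct (and worth keeping, since the paper's own proof tacitly assumes it).
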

\begin{proof} Let ${\cal N}=\{a\to a', b\to b', c\to c'\}$ with directions of reactions given by $\lambda_1=\lambda_3=1$ for $a\to a', c\to c'$ and $\lambda_2=-1$ for $b\to b'.$

If $v=(1,1)$ then up to changing directions of all reactions the  1D projections are as follows:
\begin{center}
\begin{tikzpicture}[scale=0.6][h]
\begin{scope}
\draw [->, black!60](0,0)--(10,0);
\draw[fill=black] (2.5,0) circle[radius=0.07];
\draw[fill=black] (7.5,0) circle[radius=0.07];
\draw [<-, thick](1,0)--(2.5,0);
\draw [->, thick](7.5,0)--(9,0);
\draw (2.5,.4) node {\small $b$}; 
\draw (7.5,.4) node {\small $a$}; 
\draw (10.3,-.3) node[color=black!40] {\scriptsize $1$}; 
\end{scope}
\begin{scope}[xshift=360]
\draw [->, black!60](0,0)--(10,0);
\draw[fill=black] (2.5,0) circle[radius=0.07];
\draw[fill=black] (7.5,0) circle[radius=0.07];
\draw [->, thick](7.5,0)--(6,0);
\draw [->, thick](2.5,0)--(4,0);
\draw (2.5,.4) node {\small $c$}; 
\draw (7.5,.4) node {\small $b$}; 
\draw (10.3,-.3) node[color=black!40] {\scriptsize $2$}; 
\end{scope}
\end{tikzpicture}
\end{center}

If $c_1<b_1$ or $a_2>b_2$ then ${\cal N}$ is 2-alternating and has the capacity for MPNE. If $c_1>b_1$ then $\{b\to b', c\to c'\}$ is a zigzag, contradiction; so $c_1=b_1$. Likewise, $a_2=b_2$. Let $p=a_1-b_1>0$ and $q=c_2-b_2<0$. The aroow diagram of $\cal N$ is that of Figure \ref{fig:MPNEmotifs}(iii). By lemma \ref{lem:39} $\cal N$ has the capacity for MPNE. 

If $v=(1,-1)$ then up to changing directions of all reactions the  1D projections are either
\begin{center}
\begin{tikzpicture}[scale=0.6][h]
\draw (-1,0) node {$(i)$}; 
\begin{scope}
\draw [->, black!60](0,0)--(10,0);
\draw[fill=black] (2.5,0) circle[radius=0.07];
\draw[fill=black] (7.5,0) circle[radius=0.07];
\draw [<-, thick](1,0)--(2.5,0);
\draw [->, thick](7.5,0)--(9,0);
\draw (2.5,.4) node {\small $a$}; 
\draw (7.5,.4) node {\small $b$}; 
\draw (10.3,-.3) node[color=black!40] {\scriptsize $1$}; 
\end{scope}
\begin{scope}[xshift=360]
\draw [->, black!60](0,0)--(10,0);
\draw[fill=black] (2.5,0) circle[radius=0.07];
\draw[fill=black] (7.5,0) circle[radius=0.07];
\draw [->, thick](7.5,0)--(6,0);
\draw [->, thick](2.5,0)--(4,0);
\draw (2.5,.4) node {\small $c$}; 
\draw (7.5,.4) node {\small $b$}; 
\draw (10.3,-.3) node[color=black!40] {\scriptsize $2$}; 
\end{scope}
\end{tikzpicture}
\end{center}
or 
\begin{center}
\begin{tikzpicture}[scale=0.6][h]
\draw (-1,0) node {$(ii)$}; 
\begin{scope}
\draw [->, black!60](0,0)--(10,0);
\draw[fill=black] (2.5,0) circle[radius=0.07];
\draw[fill=black] (7.5,0) circle[radius=0.07];
\draw [<-, thick](1,0)--(2.5,0);
\draw [->, thick](7.5,0)--(9,0);
\draw (2.5,.4) node {\small $b$}; 
\draw (7.5,.4) node {\small $a$}; 
\draw (10.3,-.3) node[color=black!40] {\scriptsize $1$}; 
\end{scope}
\begin{scope}[xshift=360]
\draw [->, black!60](0,0)--(10,0);
\draw[fill=black] (2.5,0) circle[radius=0.07];
\draw[fill=black] (7.5,0) circle[radius=0.07];
\draw [->, thick](7.5,0)--(6,0);
\draw [->, thick](2.5,0)--(4,0);
\draw (2.5,.4) node {\small $b$}; 
\draw (7.5,.4) node {\small $c$}; 
\draw (10.3,-.3) node[color=black!40] {\scriptsize $2$}; 
\end{scope}
\begin{scope}
\draw [->, black!60](0,0)--(10,0);
\draw[fill=black] (2.5,0) circle[radius=0.07];
\draw[fill=black] (7.5,0) circle[radius=0.07];
\draw [<-, thick](1,0)--(2.5,0);
\draw [->, thick](7.5,0)--(9,0);
\draw (2.5,.4) node {\small $b$}; 
\draw (7.5,.4) node {\small $a$}; 
\draw (10.3,-.3) node[color=black!40] {\scriptsize $1$}; 
\end{scope}
\end{tikzpicture}
\end{center}
First consider case $(i)$ in the diagram. If $c_1>b_1$ or $a_2>b_2$ then ${\cal N}$ is 2-alternating and has the capacity for MPNE. If $c_1<b_1$ then $\{b\to b', c\to c'\}$ is a zigzag, contradiction; so $c_1=b_1$. Likewise, $a_2=b_2$. The arrow diagram of $\cal N$ is that of Figure \ref{fig:MPNEmotifs}(iv), and $\cal N$ has the capacity for MPNE by lemma \ref{lem:39}. 

For case (ii) it is argued similarly that $c_1=b_1$ and $a_2=b_2$. This corresponds to the arrow diagram in Figure \ref{fig:MPNEmotifs}(v) and by lemma \ref{lem:39} either $\cal N$ has the capacity for MPNE of ${\cal N}\in{\cal C}$.  
\end{proof}

\medskip

For the proof of theorem \ref{thm:mainext} (3) for $n=2$ we consider cases according to the minimum size of all \ac subnetworks of $\cal N$: two (i.e. ${\cal N}\in{\cal Z}$) or three reactions (by Proposition \ref{prop:s2nz} \ac networks that require four reactions to form the $(\to, \gets)$ and $(\gets, \to)$ patterns are in ${\cal S}_2^{nz}$). The following two propositions combine to give a proof of Theorem \ref{thm:mainext}(3) for networks on two species. 

\begin{prop}
Let $\cal N$ be a 1-alt complete network on two species. If ${\cal N}\in {\cal Z}\setminus ({\cal S}_2^z\cup{\cal L}\cup{\cal C})$ then $\cal N$ has the capacity for MPNE. 
\end{prop}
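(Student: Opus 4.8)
The plan is to reduce the statement to the two-reaction case handled by Theorem~\ref{thm:Joshi,Shiu} together with the three-reaction lemmas just proved (Lemmas~\ref{lem:100} and~\ref{lem:02}), using the inheritance facts~(1),(2) for rank~1 networks. So suppose ${\cal N}$ is a $1$-alt complete network on two species with ${\cal N}\in{\cal Z}\setminus({\cal S}_2^z\cup{\cal L}\cup{\cal C})$. By Proposition~\ref{reduc to 1} we may assume $v\in\{(1,1),(1,-1)\}$ and $\lambda_k\in\{1,-1\}$. Since ${\cal N}\in{\cal Z}$, some $2$D projection --- here, since $n=2$, ${\cal N}$ itself --- contains a zigzag $\{a\to a', b\to b'\}$. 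If that zigzag has slope $\ne -1$ then $\{a\to a', b\to b'\}$ already has the capacity for MPNE by Theorem~\ref{thm:Joshi,Shiu}, hence so does ${\cal N}$ by fact~(1), and we are done. So we may assume every zigzag contained in ${\cal N}$ has slope $-1$.

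**Reducing to a three-reaction subnetwork.**
If ${\cal N}$ has only the two reactions $\{a\to a', b\to b'\}$, then ${\cal N}$ being a zigzag of slope $-1$ on two species with two source complexes means ${\cal N}\in{\cal S}_2^z$, contrary to hypothesis. Hence ${\cal N}$ has at least three reactions; pick the zigzag pair $\{a\to a',b\to b'\}$ and one further reaction $c\to c'$ and set ${\cal N}'=\{a\to a',b\to b',c\to c'\}\subseteq{\cal N}$. Then ${\cal N}'\in{\cal Z}$. I will argue that for a suitable choice of the third reaction (or after examining each possibility for it), either ${\cal N}'$ already has the capacity for MPNE --- in which case fact~(1) finishes the proof --- or ${\cal N}'\in{\cal C}$, and I must show this latter alternative cannot persist for \emph{every} choice of the extra reaction under our hypothesis that ${\cal N}\notin{\cal C}$. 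Concretely: if the source complexes of ${\cal N}'$ are not collinear, Lemma~\ref{lem:100} gives that ${\cal N}'$ has the capacity for MPNE or ${\cal N}'\in{\cal C}$; if they are collinear (necessarily on a line of slope $-1$, since the $a,b$ part already is), then either ${\cal N}'\in{\cal S}_2^z$-like configurations force $2$-alternating behaviour and Corollary~\ref{cor:2alt} applies, or all of ${\cal N}$'s source complexes lie on that line, and since ${\cal N}\in$\ac with $\ge 3$ source complexes but ${\cal N}\notin{\cal L}$, Remark~\ref{rmk:L} forces ${\cal N}\in$\at, again giving MPNE by Corollary~\ref{cor:2alt}.

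**Handling the corner obstruction.**
The remaining, and I expect the delicate, case is when \emph{every} three-reaction subnetwork ${\cal N}'$ containing the zigzag pair lands in ${\cal C}$, yet ${\cal N}\notin{\cal C}$. Here I would use the explicit structure of ${\cal C}$ networks from Definition~\ref{defn:class2} and Figure~\ref{fig:C2D}: membership in ${\cal C}$ is governed by the single vector $\gamma=(\min_k a_{k1},\min_k a_{k2})$ via~\eqref{eq:computeCorner}, so each qualifying ${\cal N}'$ has its own corner $\gamma'$ with $a_k-\gamma'\in\{0,e_1,e_2\}$; since the zigzag pair $\{a,b\}$ lies on a slope~$-1$ line and must itself fit the pattern in Figure~\ref{fig:C2D}(i)--(ii), the corner of ${\cal N}'$ is pinned down by $a,b$ and is the \emph{same} for all choices of the third reaction. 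But then every source complex of ${\cal N}$ satisfies $a_k-\gamma\in\{0,e_1,e_2\}$ with this common $\gamma$, forcing ${\cal N}\in{\cal C}$ --- a contradiction. I anticipate the main obstacle is being careful that the geometry really does pin $\gamma$ down (in particular that no extra reaction can sit at $\gamma+e_1+e_2$ or elsewhere off the corner pattern without producing either a non-slope-$(-1)$ zigzag, handled above, or a $2$-alternating $1$D projection, handled by Corollary~\ref{cor:2alt}); a clean case split on the position of $c$ relative to the box spanned by $\{a,b\}$, mirroring the analysis in Lemmas~\ref{lem:100} and~\ref{lem:02}, should make this rigorous.
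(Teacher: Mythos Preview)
Your outline is structurally sound and matches the paper's proof in its first two stages: reducing to slope $-1$ zigzags via Theorem~\ref{thm:Joshi,Shiu}, and then using Lemma~\ref{lem:100} to show that every additional source complex $c$ must either lie on the slope~$-1$ line through $a,b$ or sit at the corner $\gamma=(a_1,b_2)$. Where you and the paper part ways is in the ``mixed'' case, when \emph{both} types of extra source are present --- say $c=\gamma$ and some $d\neq a,b$ on the line.

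Your proposal to rule this out by exhibiting either a zigzag of slope $\neq -1$ or a $2$-alternating $1$D projection does not work as stated. Take $v=(1,1)$, $a=\gamma+e_2$, $b=\gamma+e_1$, $c=\gamma$, and $d=\gamma+(p,1-p)$ with integer $p\notin\{0,1\}$, all four with $c,d$ having the same direction as $a$. Then every pair of opposite-direction reactions among $\{a,b,c,d\}$ has slope $-1$ (the only opposite pairs are $\{a,b\}$ and $\{b,d\}$), and one checks that neither $1$D projection contains the pattern $(\gets,\to,\gets)$, so the network is not $2$-alternating. Thus neither of your two proposed escape routes is available here.

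The paper closes this gap differently: it treats the four-reaction network $\{a,b,c,d\}$ directly, writing $f(\eta,\kappa;t)=(\eta+tv)^{c}(\kappa_3-g(t))$ and choosing explicit values of $\eta$ and the $\kappa_i$ so that $g'(0)=0$, $g''(0)\neq 0$, after which Lemma~\ref{lem:ginj} yields two nondegenerate equilibria. One can alternatively salvage your inheritance-based strategy by switching to a \emph{different} three-reaction subnetwork: for the configuration above, $\{b\to b',c\to c',d\to d'\}$ is a zigzag network (via the pair $\{b,d\}$) with noncollinear sources, and a short check shows it is not in ${\cal C}$, so Lemma~\ref{lem:100} gives MPNE directly. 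Either way, the step you flagged as ``delicate'' genuinely requires more than the zigzag-slope/$2$-alt dichotomy you proposed.
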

\begin{proof} Suppose $\cal N$ does not have the capacity for MPNE. 

Let ${a\to a',b\to b'}$ form a zigzag (necessarily of slope -1) like the one in Figure $\ref{fig:s2z}$(iii). Let $c\to c'$ be a different reaction of $\cal N$. 
By Lemma \ref{lem:100} either $a,b,c$ are collinear or ${a\to a',b\to b', c\to c'}\in{\cal C}$, in other words $c=(a_1,b_2)$. Suppose we have both types, i.e.  $c\to c'$ and $d\to d'$ are such that ${a\to a',b\to b', c\to c'}\in{\cal C}$ and $a,b,d$ are collinear. Let $\kappa_1,\kappa_2,\kappa_3,\kappa_4$ denote the rate constants of $a\to a'$, $b\to b'$, $c\to c'$, $d\to d'$. We have $d_1+d_2=b_1+b_2$, so $d_2-c_2=1-(d_1-c_1)=1-p$. We have 
\begin{eqnarray*}
f(\eta,\kappa;t)&=&
(\eta_1+t)^{c_1}(\eta_2+t)^{c_2}\left(\kappa_1(\eta_2+t)-\kappa_2(\eta_1+t)
+\kappa_3
+\kappa_4\lambda_4(\eta_1+t)^p(\eta_2+t)^{1-p}
\right)\\
&=&(\eta_1+t)^{c_1}(\eta_2+t)^{c_2}(\kappa_3-g(t))
\end{eqnarray*}
where $g(t)=-\kappa_1(\eta_2+t)+\kappa_2(\eta_1+t)
-\kappa_4\lambda_4(\eta_1+t)^p(\eta_2+t)^{1-p}$ and $\lambda_4=1$ or $\lambda_4=-1$.
Note that $p=0$ or $p=1$ give $d=a$ or $d=b$, so we have $p<0$ or $p>1$. Suppose $p>1$ (the other case is similar). If $\lambda_4=1$, we set $\eta=(2,4)$ and we let $\kappa_1=2^{-p-1}(p-1)$, $\kappa_2=2^{-p-1}(3p+1)$, $\kappa_4=1$ and we get $g(0)=2^{-p+2}p$, $g'(0)=0$ and $g''(0)=-2^{-p-1}p(p-1)\neq 0$. If $\lambda_4<0$ we set $\eta=(4,2)$, $\kappa_1=2^p$, $\kappa_2=2^{p-1}p$ and $\kappa_4=1$ and we have $g(0)=2^{p+1}p$, $g'(0)=0$ and $g''(0)=2^{p-3}p(p-1)\neq 0$. In both cases 
Lemma \ref{lem:ginj} applies as above and $({\cal N},\kappa)$ has multiple nondegenerate equilibria, contradiction. 

Therefore either ${\cal N}\in{\cal C}$ or ${\cal N}$ has all source complexes on a line of slope -1. In the latter case if ${\cal N}\notin {\cal S}_2^z\cup{\cal L}$ then $\cal N$ is 2-alternating and has the capacity for MPNE, contradiction. 
\end{proof}

\begin{prop}
Let $\cal N$ be a 1-alt complete network on two species. Suppose  ${\cal N}\notin ({\cal Z}\cup {\cal C})$ and that $\cal N$ has an 1-alt complete subnetwork composed of three reactions. Then $\cal N$ has the capacity for MPNE. 
\end{prop}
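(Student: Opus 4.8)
The plan is to reduce to the three–reaction case, which is already settled by Lemmas \ref{lem:100} and \ref{lem:02}, and then lift multistationarity back up to $\mathcal N$ by inheritance. I would first dispose of the case where $\mathcal N$ is not essential. Then $\mathcal N$ and its essential projection $\mathcal N_I$ have the same capacity for MPNE, and $\mathcal N_I$ is a rank $1$ network on one species which is $1$-alt complete (inherited from $\mathcal N$). By Theorem \ref{thm:1species} and Remark \ref{rmk:1alt2alt}, either $\mathcal N_I$ is $2$-alternating — whence $\mathcal N$ is $2$-alternating and has the capacity for MPNE by Corollary \ref{cor:2alt} — or $\mathcal N_I$ has arrow diagram $(\leftarrow\!\!\bullet\!\!\rightarrow,\leftarrow\!\!\bullet\!\!\rightarrow)$, so $\mathcal N_I$ has only two source complexes; but then the hypothesized $1$-alt complete three–reaction subnetwork of $\mathcal N$, restricted to the one species kept by $\mathcal N_I$ (the other species contributes only trivial $1$D projections), would be a $1$-alt complete one–species network with three reactions, forcing at least three source complexes — impossible. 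So from now on $\mathcal N$ may be assumed essential; in particular $v$ has no zero coordinate, and since $n=2$ the condition $\mathcal N\notin\mathcal Z$ just says no two reactions of $\mathcal N$ form a zigzag.

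Next let $\mathcal M=\{a\to a',\,b\to b',\,c\to c'\}$ be the given $1$-alt complete three–reaction subnetwork. Being a subnetwork of $\mathcal N\notin\mathcal Z$, it lies in \ac$\setminus\mathcal Z$, so Lemma \ref{lem:02} applies: either $\mathcal M$ has the capacity for MPNE — in which case so does $\mathcal N$ by the inheritance Theorem \ref{thm:inheritance}, both networks having rank $1$ — or $\mathcal M\in\mathcal C$. I would then work under the assumption $\mathcal M\in\mathcal C$. Since $\mathcal M$ has exactly three source complexes, after permuting the two species and possibly replacing $\mathcal N$ by $-\mathcal N$ we may normalize $\mathcal M$ to one of the two forms of Figure \ref{fig:C2D}$(i),(ii)$: translating the corner to the origin, the source complexes of $\mathcal M$ are $0,e_1,e_2$, we have $v\in\{(1,1),(1,-1)\}$, and the reaction directions are the ones forced on a $1$-alt complete corner network. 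Because $\mathcal N$ is essential, $1$-alt complete, has at least three source complexes, but $\mathcal N\notin\mathcal C$, there must exist a source complex $d$ of $\mathcal N$ with $d\notin\{0,e_1,e_2\}$ — otherwise all source complexes of $\mathcal N$ would lie in $\{0,e_1,e_2\}$ and $\mathcal N$ would be a corner network with corner $0$. Fix a reaction $d\to d'$ out of $d$, with direction $\lambda_d\in\{1,-1\}$.

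The heart of the proof is to build, from $d\to d'$ and two of the three reactions of $\mathcal M$, a three–reaction subnetwork $\mathcal M'$ of $\mathcal N$ that is $1$-alt complete and \emph{not} in $\mathcal C$. Granting this, $\mathcal M'\in\,$\ac$\setminus(\mathcal Z\cup\mathcal C)$ (again $\mathcal M'\notin\mathcal Z$ as a subnetwork of $\mathcal N$), Lemma \ref{lem:02} gives $\mathcal M'$ the capacity for MPNE, and Theorem \ref{thm:inheritance} passes it to $\mathcal N$, finishing the proof. To construct $\mathcal M'$ I would split into the finitely many positions of $d$: (I) both coordinates of $d$ are $\ge 1$; (II) $d=(k,0)$ with $k\ge 2$; (III) $d=(0,k)$ with $k\ge 2$. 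In case (I) the triple obtained by adjoining $d\to d'$ to the reactions out of $e_1$ and $e_2$ is $1$-alt complete for either value of $\lambda_d$; in cases (II) and (III) one of the triples obtained by adjoining $d\to d'$ to the reactions out of two of $0,e_1,e_2$ works, the correct choice depending on the sign of $\lambda_d$. In each sub-case $1$-alt completeness is read off immediately from the two $1$D projections, exactly as in the proofs of Lemmas \ref{lem:100} and \ref{lem:02} (the triple is frequently even $2$-alternating, or has one of the arrow diagrams of Figure \ref{fig:MPNEmotifs}), and membership in $\mathcal C$ is excluded because the only candidate corner, computed from formula (\ref{eq:computeCorner}), is the origin, while $d\notin\{0,e_1,e_2\}$ puts $d$ too far from the other two source complexes. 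The case $v=(1,-1)$ is entirely analogous to $v=(1,1)$.

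The step I expect to be the main obstacle is this last one: organizing the case analysis on the location of $d$ and the sign of $\lambda_d$, and checking for each configuration that the selected three–reaction subnetwork simultaneously exhibits the $(\to,\gets)$ and $(\gets,\to)$ patterns and fails to be a corner network. Once $\mathcal M$ has been put in the normal form of Figure \ref{fig:C2D}, all arrow directions are pinned down and the verification becomes a short, mechanical check; everything else is a brief chain of applications of Lemma \ref{lem:02}, Lemma \ref{lem:100}, and the inheritance Theorem \ref{thm:inheritance}.
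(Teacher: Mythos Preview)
Your approach is essentially the paper's: apply Lemma~\ref{lem:02} to the three-reaction subnetwork $\mathcal M$, and if $\mathcal M\in\mathcal C$, swap one reaction of $\mathcal M$ for a reaction $d\to d'$ from a new source complex to get a triple $\mathcal M'\in$\ac$\setminus(\mathcal Z\cup\mathcal C)$, then apply Lemma~\ref{lem:02} again and lift by inheritance. The paper organizes the case analysis by the direction $\lambda_d$ first and then by inequalities between $d_1,d_2$ and the coordinates of $a,b,c$; you organize it by position of $d$ first.

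A few slips in your execution, none fatal to the strategy:
\begin{itemize}
\item Since $\mathcal N\notin\mathcal Z$ and $\mathcal M\subset\mathcal N$, you have $\mathcal M\notin\mathcal Z$, so $\mathcal M$ cannot have the arrow diagram of Figure~\ref{fig:C2D}(i) (which contains a zigzag). Only Figure~\ref{fig:C2D}(ii) arises, i.e.\ $v=(1,-1)$ after normalization; the $v=(1,1)$ case you mention is vacuous.
\item In your case (I), adjoining $d\to d'$ to the reactions out of $e_1$ and $e_2$ fails when $\lambda_d=+1$: in Figure~\ref{fig:C2D}(ii) the reactions out of $e_1$ and $e_2$ both have direction $+v$, so if $\lambda_d=+1$ all three arrows point the same way and the triple is not 1-alt complete. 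You must include the reaction out of the corner $b=0$ (direction $-v$); the paper uses $\{a,b,d\}$ or $\{b,c,d\}$ here.
\item Translating the corner of $\mathcal M$ to the origin need not keep $d$ in the nonnegative orthant: $\gamma$ is the coordinatewise minimum over $\{a,b,c\}$, not over all source complexes of $\mathcal N$, so $d-\gamma$ can have a negative entry. Your cases (I)--(III) miss this, and your exclusion of $\mathcal C$ via formula~(\ref{eq:computeCorner}) (``the only candidate corner is the origin'') is then wrong. The paper avoids this by not translating and comparing $d_1,d_2$ directly with $b_1,b_2,c_1,a_2$.
\end{itemize}
With these corrections your case analysis becomes exactly the paper's. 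Your separate treatment of non-essential $\mathcal N$ is unnecessary in context (the proposition is applied to essential projections) but harmless.
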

\begin{proof}
Suppose the subnetwork $\{a\to a', b\to b', c\to c'\}$ of $\cal N$ is \ac with $a\to a', c\to c'$ having the same direction. By Lemma \ref{lem:02} $\cal N$ has the capacity for MPNE unless the arrow diagram of $\{a\to a', b\to b', c\to c'\}$ is that in Figure \ref{fig:C2D}(ii). Assume that's the case, and let $d\to d'$ be a reaction of $\cal N$ with $d\notin\{a,b,c\}$. 

First assume $d\to d'$ has the same direction as $a\to a'$. If $d_1>b_1$ then $\{a\to a', b\to b', d\to d'\}\in$\ac$\setminus{\cal Z}$ and since $d\neq c$ by Lemma \ref{lem:02} $\cal N$ has the capacity for MPNE. If $d_1<b_1$ then $\{b\to b', c\to c', d\to d'\}\in$\ac$\setminus{\cal Z}$ and by Lemma \ref{lem:02} $\cal N$ has the capacity for MPNE. Likewise, $\cal N$ has the capacity for MPNE if $d_2\neq b_2$. Since $d\neq b$ we conclude that $\cal N$ has MNPE. 

Assume $d\to d'$ has the  direction of $b\to b'$. If $d_1>c_1$ or $d_2>a_2$ then ${\cal N}\in$\at has capacity for MPNE. Assume $d_1\le c_1$ and $d_2\le a_2$. If $d_1<a_1$ and $d_2<a_2$ then $\{a\to a', d\to d'\}$ is a zigzag, contradiction. Likewise, if $d_1<c_1$ and $d_2<c_2$ $\{c\to c', d\to d'\}$ is a zigzag, contradiction. We therefore have $d_1\in\{a_1,c_1\}$, $d_2\in\{a_2, c_2\}$. Since $d$ is different from $a,b,c$ we have $d=(c_1, a_2)$. Then $\{a\to a', d\to d', c\to c'\}\in$\ac$\setminus{\cal C}$, and by lemma \ref{lem:02} $\cal N$ has the capacity for MPNE. 
\end{proof}

\subsubsection{Networks on $n\ge 3$ species} 
{\em Let $\cal N$ be an essential 1-alt complete reaction network on $n\ge 2$ species, and suppose $\cal N$ does not have the capacity for MPNE.} We prove that ${\cal N}\in {\cal D}$ by using inheritance (Theorems \ref{thm:inheritance}, \ref{thm:inheritance2}; see also the remarks at the beginning of section \ref{sec:1Drn}). The outline of the argument is this: $\cal N$ does not have the capacity for MPNE, and therefore neither do any of its projections. In particular, all \ac 2D projections of $\cal N$ are in $\cal D$ by case $n=2$. Noting that all \ac networks have 2D projections that are \ac, the following propositions combine to show that $\cal N$ itself is in $\cal D$, completing the proof of Theorem \ref{thm:mainext}. The assumptions on $\cal N$ stated above carry over in all three statements below. 

\begin{prop}\label{lem:S2zLproj}
If $\cal N$ has a 2D projection in ${\cal S}_2^{z}\cup {\cal L}$, then ${\cal N}\in {\cal S}_2^{z}\cup {\cal L}$.
\end{prop}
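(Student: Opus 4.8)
The classes ${\cal S}_2^z$ and ${\cal L}$ consist of networks on two species, so the statement is equivalent to showing $n=2$ (and then ${\cal N}={\cal N}_{\{1,2\}}\in{\cal S}_2^z\cup{\cal L}$). So assume $n\ge3$, and after relabelling take ${\cal N}_{\{1,2\}}\in{\cal S}_2^z\cup{\cal L}$. Since $\cal N$ is essential, coordinate $3$ is nonconstant on the source complexes, hence ${\cal N}_{\{1,2,3\}}$ is essential on three species, $1$-alt complete, without the capacity for MPNE, and still projects to ${\cal N}_{\{1,2\}}\in{\cal S}_2^z\cup{\cal L}$ on $\{1,2\}$; thus it suffices to rule out the case $n=3$. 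For $n=3$ I would use Proposition~\ref{reduc to 1} to take $v\in\{-1,1\}^3$; by Remark~\ref{rmk:L} and the definition of ${\cal S}_2^z$, the source complexes of $\cal N$ lie, in coordinates $\{1,2\}$, on a line of slope $-1$, with every reaction having $\lambda_k>0$ preceding every one with $\lambda_k<0$ along that line and at most one $R^0$-complex in between. Up to replacing $\cal N$ by $-\cal N$ and swapping coordinates $1,2$ this means $v_1=v_2=1$, $a_{k1}+a_{k2}=c$ for a fixed $c$, coordinate $1$ carries the pattern $(\to,\gets)$ and not $(\gets,\to)$, and coordinate $2$ carries $(\gets,\to)$ and not $(\to,\gets)$.

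Next I would produce a third useful $2$D projection. Because ${\cal S}_2^z$ and ${\cal L}$ contain reactions with $\lambda_k$ of both signs, coordinate $3$ carries arrows in both directions, and since it is nonconstant on the source complexes some $\to$-arrow and some $\gets$-arrow sit at distinct positions, so coordinate $3$ carries $(\to,\gets)$ or $(\gets,\to)$; hence one of ${\cal N}_{\{2,3\}}$, ${\cal N}_{\{1,3\}}$ is $1$-alt complete, and by a symmetric treatment I may assume it is ${\cal N}_{\{1,3\}}$. Suppose first ${\cal N}_{\{1,2\}}\in{\cal L}$. Then coordinate $1$ takes $\ge3$ values, so ${\cal N}_{\{1,3\}}$ has $\ge3$ source complexes whose first coordinates take $\ge3$ values; having no MPNE it lies in $\cal D$ by the already-proved two-species case of Theorem~\ref{thm:mainext}, and it can be neither ${\cal S}_2^z$ nor ${\cal S}_2^{nz}$ (exactly two source complexes) nor $\cal C$ (first coordinates take $\le2$ values there), so ${\cal N}_{\{1,3\}}\in{\cal L}$. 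Thus $a_{k1}+a_{k3}=c'$ for a fixed $c'$, which with $a_{k1}+a_{k2}=c$ forces every source complex of $\cal N$ onto the single line $\{(t,\,c-t,\,c'-t):t\in{\mathbb R}\}$, of direction $u=(1,-1,-1)$, not parallel to $v=(1,1,v_3)$; moreover the two $\cal L$-orderings sort the reactions by direction along this line. The crux is then the following claim: \emph{an essential rank~$1$ network whose source complexes lie on a line $a_k=\gamma+p_k u$ with $u,v\in\{\pm1\}^n$, $u\ne\pm v$, and $p_k<0$ for $k\in R^+\setminus R^0$, $p_k=0$ for $k\in R^0$, $p_k>0$ for $k\in R^-\setminus R^0$, $R^+\setminus R^0,R^-\setminus R^0\ne\emptyset$, has the capacity for MPNE}. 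Granting it, $\cal N$ has the capacity for MPNE, a contradiction, so $n=2$ when ${\cal N}_{\{1,2\}}\in{\cal L}$.

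To prove the claim I would factor $f(\eta,\kappa;t)=(\eta+tv)^{\gamma}\,h(w(t))$ with $w(t)=(\eta+tv)^u=\prod_i(\eta_i+tv_i)^{u_i}$ and
\[
h(w)=\sum_{k\in R^+\setminus R^0}\kappa_k w^{p_k}-\sum_{k\in R^-\setminus R^0}\kappa_k w^{p_k}+\sum_{k\in R^0}(\kappa_k^+-\kappa_k^-).
\]
Every $p_k$ occurring with a $+$ sign is negative and every one with a $-$ sign is positive, so $h'(w)<0$ on $(0,\infty)$; it has a unique zero $w_\ast$, and the $\kappa$'s can be arranged so that $w_\ast$ is any prescribed positive value. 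Since $u\ne\pm v$, the entries of $(u_iv_i)_i$ have both signs, so one can choose $\eta\in{\mathbb R}^n_{>0}$ (with $0\in(\alpha^\eta,\beta^\eta)$) for which $(\log w)'(t)=\sum_i u_iv_i/(\eta_i+tv_i)$ changes sign in the interior of $(\alpha^\eta,\beta^\eta)$, hence $w$ has an interior critical point $t_0$ with $w(t_0)>0$, $w'(t_0)=0$, $w''(t_0)\ne0$; Lemma~\ref{lem:ginj} applied to $w$ gives $t_1\ne t_2$ near $t_0$ with $w(t_1)=w(t_2)>0$ and $w'(t_1),w'(t_2)\ne0$. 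Choosing the rate constants so that $w_\ast=w(t_1)$, we get $f(\eta,\kappa;t_i)=(\eta+t_iv)^{\gamma}h(w_\ast)=0$, and by Lemma~\ref{prop:DSSh} (with $p(t)=(\eta+tv)^{\gamma}$, $q(t)=h(w(t))$) these equilibria are nondegenerate since $q'(t_i)=h'(w_\ast)w'(t_i)\ne0$.

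Finally the case ${\cal N}_{\{1,2\}}\in{\cal S}_2^z$. Here ${\cal N}_{\{1,2\}}\subset{\cal Z}$, so ${\cal N}\in{\cal Z}$; if some $2$D projection of $\cal N$ had a zigzag of slope $\neq-1$ then $\cal N$ would contain a two-reaction $1$-alt complete subnetwork outside ${\cal S}_2^z$, which has MPNE by Theorem~\ref{thm:Joshi,Shiu}, hence so would $\cal N$ by inheritance — a contradiction; so all zigzags of $\cal N$ have slope $-1$, and by Lemma~\ref{cor:31} the two zigzag reactions agree in coordinate~$3$. A short case analysis on the third coordinates of the source complexes, run exactly as in Lemma~\ref{lem:02} and the two-species part of this section — in each branch one exhibits either a $2$-alternating $2$D projection (MPNE by Corollary~\ref{cor:2alt}), a $1$-alt complete $2$D projection outside $\cal D$ (MPNE by the $n=2$ case), or a collinear configuration as above (MPNE by the claim) — forces coordinate~$3$ to be constant on the source complexes, contradicting essentiality. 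Hence $n=2$ and ${\cal N}\in{\cal S}_2^z\cup{\cal L}$. I expect the main obstacle to be the collinear-configuration claim (the punchline of the $\cal L$ case) and making the third-coordinate case analysis in the ${\cal S}_2^z$ case exhaustive; the reduction to $n=3$ and the arrow-pattern bookkeeping are routine.
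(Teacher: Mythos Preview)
Your approach is viable in outline but is far more elaborate than necessary, and you miss the single observation that makes the paper's proof a ten-line argument handling ${\cal S}_2^z$ and ${\cal L}$ uniformly: \emph{both classes lie inside ${\cal Z}$} (see Figure~\ref{fig:A1networks}). Concretely, if ${\cal N}_{\{1,2\}}\in{\cal S}_2^z\cup{\cal L}$ then there are reactions $a\to a'$, $b\to b'$ of $\cal N$ whose $\{1,2\}$-projection is a zigzag. Since $\cal N$ has no capacity for MPNE, Corollary~\ref{cor:no2zz2} forces $\{1,2\}$ to be the \emph{only} 2D projection on which this pair is a zigzag, and then Lemma~\ref{cor:31} gives $a_k=b_k$ for every $k\notin\{1,2\}$. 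Now vary the pair: any reaction $c\to c'$ with $c\neq a$ and direction opposite to $a\to a'$ can replace $b\to b'$, and any reaction with $c\neq b$ and direction opposite to $b\to b'$ can replace $a\to a'$. This covers all source complexes, so every coordinate $k\ge 3$ is constant on the sources, contradicting essentiality. No reduction to $n=3$, no collinear-configuration claim, no case split between ${\cal S}_2^z$ and $\cal L$.

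You partially rediscover this mechanism in your ${\cal S}_2^z$ case (you invoke Lemma~\ref{cor:31}), but you feed it the weaker hypothesis ``all zigzags have slope $-1$'' rather than the needed ``$\{1,2\}$ is the unique zigzag projection'', and then defer to an unspecified case analysis. More importantly, you do not notice that the same zigzag argument dispatches the $\cal L$ case as well; instead you build a separate machine (the collinear-configuration claim) whose crucial step---choosing $\eta$ so that $w(t)=(\eta+tv)^u$ has an interior critical point with $w''(t_0)\neq 0$---is only asserted, not verified. That step is plausible but would require explicit choices of $\eta$ and a computation ruling out $w''(t_0)=0$, which is real work. In short: your route can likely be completed, but it replaces a two-line application of Corollary~\ref{cor:no2zz2} $+$ Lemma~\ref{cor:31} (plus the ``vary the zigzag pair'' trick) by a substantial analytic lemma and an unfinished case analysis.
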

\begin{proof}
Suppose ${\cal N}_{\{1,2\}}\in {\cal S}_2^{z}\cup \cal L$, and assume that reactions $a\to a'$ and $b\to b'$ of $\cal N$ form a zigzag in ${\cal N}_{\{1,2\}}$. Since $\cal N$ does not have the capacity for MPNE it follows from corollary \ref{cor:no2zz2} that $\{1,2\}$ is the only projection of $\{a\to a', b\to b'\}$ with zigzag, and therefore we have $a_k=b_k$ for all $k\in S\setminus \{1,2\}$ by lemma \ref{cor:31}.
Any reaction $c\to c'$ with $c\neq a$ and having the same direction as $b\to b'$ can replace $b\to b'$ in the argument above, and therefore $a_k=c_k$ for all $k\in S\setminus \{1,2\}$. In the same way $c_k=b_k$ for all $k\in S\setminus \{1,2\}$ when $c\to c'$ has the same direction as $a\to a'$ and $c\neq b$. Therefore all 1D projections other than on $\{1\}$, $\{2\}$ have a single source complex, which contradicts the fact that $\cal N$ is an essential network if $n>3$. Therefore $n=2$ and  ${\cal N}={\cal N}_{\{1,2\}}$.
\end{proof}

\begin{prop}\label{prop:main}
If $\cal N$ has a 2D projection in ${\cal C}$ then ${\cal N}\in {\cal C}$.
\end{prop}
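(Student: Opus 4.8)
The plan is to show, by induction on $n$, that every source complex of $\cal N$ differs from
$\gamma:=(\min_{k\in R}a_{k1},\dots,\min_{k\in R}a_{kn})\in{\mathbb Z}^n$
by $0$ or a single standard basis vector; this is exactly ${\cal N}\in{\cal C}$ with corner $\gamma$. After permuting species we may assume the given $2$D projection is ${\cal N}_{\{1,2\}}\in{\cal C}$; by the remark following \eqref{eq:computeCorner} its corner is then $(\gamma_1,\gamma_2)$, so $(a_{k1}-\gamma_1,\,a_{k2}-\gamma_2)\in\{0,e_1,e_2\}$ for every $k$. Since ${\cal N}_{\{1,2\}}\in{\cal C}$ has three or more source complexes, so does $\cal N$; consequently, when $n\ge3$, $\cal N$ lies outside ${\cal S}_2^z\cup{\cal L}$ (networks on two species) and outside ${\cal S}_2^{nz}$ (networks with two source complexes), so what must be ruled out is any source complex that is not of corner type.

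The base case $n=2$ is immediate since then ${\cal N}={\cal N}_{\{1,2\}}$. Let $n\ge3$. For each $s\in\{3,\dots,n\}$ the projection ${\cal N}_{\{1,2,s\}}$ is essential (its $1$D projections are those of $\cal N$, which have two or more source complexes), has no capacity for MPNE by inheritance, is $1$-alt complete because ${\cal N}_{\{1,2\}}\in{\cal C}\subseteq$\ac already realizes both patterns on ${\cal N}_{\{1\}}$ and ${\cal N}_{\{2\}}$, and has the $2$D projection ${\cal N}_{\{1,2\}}\in{\cal C}$; so by the inductive hypothesis ${\cal N}_{\{1,2,s\}}\in{\cal C}$, with corner $(\gamma_1,\gamma_2,\gamma_s)$. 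Likewise ${\cal N}_{\{1,2,s,s'\}}\in{\cal C}$ for distinct $s,s'\ge3$ when $n\ge5$ (so that $\{1,2,s,s'\}\subsetneq S$). Reading these off coordinate by coordinate: if $a_k$ exceeds $\gamma$ in a coordinate of $\{1,2\}$ then $a_k=\gamma+e_1$ or $a_k=\gamma+e_2$; if $a_k$ exceeds $\gamma$ only in coordinates $\ge3$ then it does so by exactly $1$ in each such coordinate (from the $3$-coordinate projections), and in at most one coordinate when $n\ge5$ (from the $4$-coordinate projections). So the claim holds for $n\ge5$; for $n=4$ the only configuration left to exclude is a source complex equal to $\gamma+e_3+e_4$; and for $n=3$ the configurations left to exclude are a source complex with $a_{k3}\ge\gamma_3+2$, or one of the form $\gamma+e_1+c\,e_3$ or $\gamma+e_2+c\,e_3$ for some integer $c\ge1$.

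To eliminate these residual configurations I would push them into a small projection or subnetwork and produce capacity for MPNE, contradicting the standing hypothesis. Given a bad source complex $a_k$, pick source complexes attaining the relevant minima $\gamma_t=\min_m a_{mt}$, and project onto two coordinates $t,t'$ in which $a_k$ exceeds $\gamma$. In that $2$D projection $a_k$ sits at the outer vertex of a box whose other occupied vertices lie in the unit grid around $(\gamma_t,\gamma_{t'})$; such a network is not in ${\cal L}$ (its source complexes are not collinear on a line of slope $-1$), not in ${\cal C}$ (it has a source at the ``$+e_t+e_{t'}$'' vertex, or a coordinate exceeding the corner by $\ge2$), and not in ${\cal S}_2^z\cup{\cal S}_2^{nz}$ once it has three or more source complexes; so if it is $1$-alt complete the case $n=2$ of Theorem~\ref{thm:mainext}(3) gives capacity for MPNE. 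When instead that projection has exactly two source complexes -- a possibility which, because ${\cal N}_{\{1,2\}}\in{\cal C}$ forces ${\cal N}_{\{1\}}$ and ${\cal N}_{\{2\}}$ to have exactly two source complexes, arises only in a handful of tightly constrained sub-cases -- I would argue instead as follows: a pair of opposite-direction reactions across such a box is a zigzag of slope $\ne-1$ and hence has capacity for MPNE by Theorem~\ref{thm:Joshi,Shiu}; two reactions parallel to the box diagonal trigger Corollary~\ref{cor:no2zz2} and Lemma~\ref{cor:31}, forcing coordinate equalities that contradict the minimality of $\gamma$ or reduce to a case already treated; and if the projection falls into ${\cal S}_2^z\cup{\cal L}$ then Proposition~\ref{lem:S2zLproj} would put $\cal N$ in ${\cal S}_2^z\cup{\cal L}$, contradicting $n\ge3$. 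For $n=3$, where the projection on the three active coordinates is $\cal N$ itself, I would close the remaining cases using Lemmas~\ref{lem:02} and~\ref{lem:100} on three-reaction subnetworks of suitable $2$D projections of $\cal N$, together with Corollary~\ref{cor:2alt} to discard any subnetwork that turns out to be $2$-alternating.

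The hard part is precisely this low-dimensional case analysis: verifying that in each residual configuration one can actually exhibit a $2$D (or three-reaction) projection that is genuinely $1$-alt complete and outside ${\cal D}={\cal S}_2^z\cup{\cal L}\cup{\cal S}_2^{nz}\cup{\cal C}$. This means tracking the two possible arrow diagrams of ${\cal N}_{\{1,2\}}$ from Figure~\ref{fig:C2D}(i),(ii), whether the two $1$-alt patterns are both carried by a single coordinate -- in which case Remark~\ref{rmk:1alt2alt} and Corollary~\ref{cor:2alt} force that coordinate's arrow diagram to be $(\leftarrow\!\!\bullet\!\!\rightarrow,\leftarrow\!\!\bullet\!\!\rightarrow)$ -- or are split between the two coordinates, and the direction of the reaction leaving the offending source complex. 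The inheritance reductions and the zigzag lemmas keep the bookkeeping finite, but this is where essentially all the work lies.
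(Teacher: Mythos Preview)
Your proposal is incomplete: you explicitly concede that the cases $n=3$ and $n=4$ are ``where essentially all the work lies'' and only sketch how you might close them. That is precisely where the content of the proposition is, so as it stands the argument does not establish the claim.

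There is also a structural inefficiency in your induction. To rule out a source complex exceeding $\gamma$ in two coordinates $s,s'\ge 3$, you pass to the $4$D projection ${\cal N}_{\{1,2,s,s'\}}$, which is only a proper projection when $n\ge 5$; this is what forces you to treat $n=4$ separately. The paper avoids this by dropping one of the anchor coordinates instead: since ${\cal N}_{\{1\}}$ and ${\cal N}_{\{2\}}$ each carry at least one of the two arrow patterns, for any $k,l$ at least one of ${\cal N}_{\{1,k,l\}}$, ${\cal N}_{\{2,k,l\}}$ is $1$-alt complete, and the $n=3$ case then forces that $3$D projection into $\cal C$. This works already for $n=4$, so only $n=3$ needs a direct argument.

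For $n=3$ the paper's argument is short and quite different from what you sketch. It splits on the two arrow diagrams of ${\cal N}_{\{1,2\}}$ in Figure~\ref{fig:C2D}. In the zigzag case (i), Corollary~\ref{cor:no2zz2} and Lemma~\ref{cor:31} applied to the zigzag $\{a\to a',\,b\to b'\}$ force $a_3=b_3$; essentiality forces $c_3\neq a_3$; the sign of $c_3-a_3$ is then pinned down (the wrong sign gives an arrow diagram outside $\cal D$ on ${\cal N}_{\{1,3\}}$), and finally ${\cal N}_{\{1,3\}}\in{\cal D}$ together with the shape of its arrow diagram forces ${\cal N}_{\{1,3\}}\in{\cal C}$, hence $c_3-a_3=1$. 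Case (ii) is reduced to case (i). This is cleaner than the box-and-zigzag case analysis you outline, and in particular the paper never needs to enumerate ``residual configurations'' such as $\gamma+e_1+ce_3$.
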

\begin{proof} 
First we consider networks on three species, $n=3$. Let ${\cal N}=\{a\to a',b\to b', c\to c'\}$ and suppose that ${\cal N}_{\{1,2\}}$ forms one of the  arrow diagram  in Figure \ref{fig:C2D}.

{\em Case 1: ${\cal N}_{\{1,2\}}\in{\cal Z}$, Figure \ref{fig:C2D}(i)}. If $a_3\neq c_3$ then $\{a\to a', b\to b'\}$ is a zigzag on the $\{1,2\}$ or $\{1,3\}$ projections; this subnetwork of $\cal N$ also forms a zigzag on the $\{1,2\}$ projection; by corollary \ref{cor:no2zz2} $\cal N$ has the capacity for MPNE, contradiction. Therefore $a_3=b_3$. If $a_3=b_3=c_3$ then since $\{a\to a', b\to b'\}$ are arbitrary representatives of all reactions with the same projections on $\{1,2\}$ it follows that ${\cal N}_{\{k\}}$ has a single source complex. But $\cal N$ is essential, contradiction.

We assume that $c_3\neq a_3=b_3$ and suppose $a\to a'$ has positive direction in ${\cal N}_{\{3\}}$, i.e. $a'_3>a_3$ (the case of the opposite direction is treated in the same way). 
If $c_3<a_3$ 
then ${\cal N}_{\{1,3\}}$ has the arrow diagram in Figure \ref{fig:casesforC}(i), and so ${\cal N}\in$\ac$\setminus {\cal D}$, contradicting the lack of capacity for MPNE of $\cal N$. 

Therefore $a_3=b_3<c_3$, and it follows that ${\cal N}_{\{1,3\}}$ has the arrow diagram in Figure \ref{fig:casesforC}(ii). But ${\cal N}_{\{1,3\}}\in {\cal D}$ so we must have ${\cal N}\in {\cal C}$, and $c_k-a_k=1$. Letting $\gamma=(a_1,b_2,a_1)$ we have $a-\gamma=(0,1,0)$, $b-\gamma=(1,0,0)$ and $c-\gamma =(0,1,0)$. Therefore ${\cal N}\in {\cal C}$.

\smallskip

{\em Case 2: ${\cal N}_{\{1,2\}}\in{\cal C}\setminus{\cal Z}$, Figure \ref{fig:C2D}(ii)}. If $\cal N$ has a 2D projection in ${\cal Z}\cap {\cal C}$ then Case 1 implies that ${\cal N}\in {\cal C}$. We show that otherwise we get contradiction.
Suppose no 2D projection of ${\cal N}$ is in $(\cal C)\cup{\cal Z}$. We have $$v_1(c_1-b_1)>0 \text{ and }v_2(b_2-a_2)>0$$ and therefore 
$$v_3(c_3-b_3)\ge 0\text{ and }v_3(b_3-a_3)\ge 0,$$ 
in particular $(c_3-b_3)(b_3-a_3)\ge 0$, i.e. either $(c_3-b_3)(b_3-a_3)>0$ or ${\cal N}_{\{3\}}\in$\at. The latter contradicts the lack of capacity for MPNE of $\cal N$.

Assume $a_3=c_3$ (the other case is similar). If $a_3=b_3=c_3$ then as above ${\cal N}_{\{3\}}$ contains a single source complex and $\cal C$ is not essential, contradiction. If $a_3=c_3\neq b_3$ then either $\{b\to b', c\to c'\}$ is a zigzag on the $\{1,3\}$ projection, or $\{a\to a', b\to b'\}$ is a zigzag on the $\{2,3\}$ projection. The projections of $a$, $b$, $c$ on $\{1,3\}$ are pairwise distinct: for example $a_3\neq b_3$, $b_3\neq c_3$, and $c_1\neq a_1$. Moreover, they are not collinear. Since ${\cal N}\in{\cal D}$ it follows that ${\cal N}_{\{1,3\}}\in {\cal C}\cap{\cal Z}$, contradicting our assumption. 

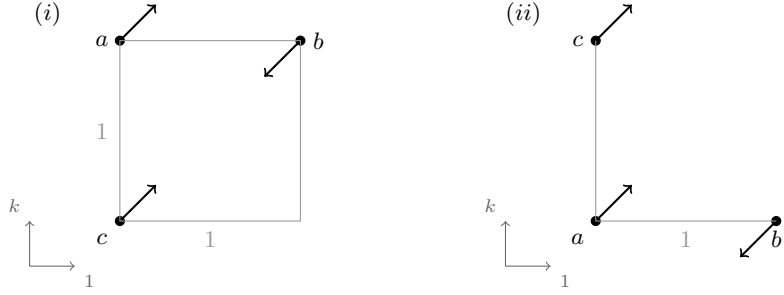
\begin{figure}[t]
\centering
\begin{tikzpicture}[scale=1.2]
\begin{scope}
\draw [->, color=black!60](-1,-.5)--(-.5,-.5) node[anchor=north west]{\scriptsize $1$};
\draw [->, color=black!60](-1,-.5)--(-1,0) node[anchor=south east]{\scriptsize $k$};
\draw[fill=black] (2,2) circle[ radius=0.05];
\draw[fill=black] (0,2) circle[ radius=0.05];
\draw[fill=black] (0,0) circle[ radius=0.05];
\draw [->,  thick](0,2)--(.4,2.4);
\draw [->,  thick](2,2)--(1.6,1.6);
\draw [->,  thick](0,0)--(.4,.4);
\draw [color=black!40] (2,0)--(0,0)--(0,2)--(2,2)--(2,0);
\draw[color=black!40] (1,-.2) node {\small $1$}; 
\draw[color=black!40] (-.2,1) node {\small $1$}; 
\draw (-.2,2) node {\small $a$};
\draw (2.2,2) node {\small $b$};
\draw (-.2,-.2) node {\small $c$};
\draw (-.8,2.3) node {\small $(i)$};
\end{scope}
\begin{scope}[xshift=150]
\draw [->, color=black!60](-1,-.5)--(-.5,-.5) node[anchor=north west]{\scriptsize $1$};
\draw [->, color=black!60](-1,-.5)--(-1,0) node[anchor=south east]{\scriptsize $k$};
\draw[fill=black] (2,0) circle[ radius=0.05];
\draw[fill=black] (0,2) circle[ radius=0.05];
\draw[fill=black] (0,0) circle[ radius=0.05];
\draw [->,  thick](0,2)--(.4,2.4);
\draw [->,  thick](2,0)--(1.6,-.4) ;
\draw [->,  thick](0,0)--(.4,.4);
\draw [color=black!40] (2,0)--(0,0)--(0,2);
\draw[color=black!40] (1,-.2) node {\small $1$}; 
\draw (-.2,2) node {\small $c$};
\draw (2,-.2) node {\small $b$};
\draw (-.2,-.2) node {\small $a$};
\draw (-.8,2.3) node {\small $(ii)$};
\end{scope}
\end{tikzpicture}
\caption{Cases for the proof of Proposition  \ref{prop:main}}\label{fig:casesforC}
\end{figure}

\medskip

Now we consider $n\ge 4$. 
Assume by permuting coordinates if necessary that ${\cal N}_{\{1,2\}}\in{\cal C}$ has one of the arrow diagrams in Figure \ref{fig:C2D}. For each $k\in S$, ${\cal N}_{\{k\}}$ contains at least two source complexes ($\cal N$ is essential).  Let $\gamma_k$ denote the minimum of all source complexes of ${\cal N}_{\{k\}}$. For any $k\ge 3$, ${\cal N}_{\{1,2,k\}}\in{\cal D}$ is an essential 1-alt complete network on three species, and so it must be in $\cal C$. It follows that for any complex $d$ of $\cal N$ and for any $k\in S$  $d_k=\gamma_k$ or $d_k=\gamma_k+1$. We check that the latter cannot hold on more than one coordinate $k$. Suppose $k,l$ are different and
$d_k=\gamma_k$, $d_l=\gamma_l$. Each 1D projection of $\cal N$ contains at least one of the $(\to,\gets)$ or $(\gets, \to)$ patterns, and so either ${\cal N}_{\{1,k,l\}}$ is 1-alt complete or ${\cal N}_{\{2,k,l\}}$ is. In either case, that 3D projection of $\cal N$ must be in $\cal C$ by hypothesis and therefore the projection of $d-\gamma$ on $\{1,k,l\}$ has only one nonzero entry, contradiction. 
Therefore for any source complex $d$ of $\cal N$ we have $d-\gamma\in\{e_1,\ldots, e_n\}$ and ${\cal N}\in{\cal C}$.
\end{proof}

\begin{prop}\label{lem:S2nzproj}
If $\cal N$ has a 2D projection in ${\cal S}_2^{nz}$, then ${\cal N}\in {\cal S}_2^{nz}$.
\end{prop}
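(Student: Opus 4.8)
The plan is to reduce everything to the combinatorial characterization of Proposition~\ref{prop:s2nz}: since ${\cal N}$ is 1-alt complete, it suffices to show that no three-reaction subnetwork of ${\cal N}$ is 1-alt complete, and then ${\cal N}\in{\cal S}_2^{nz}$. We may assume $n\ge 3$ (if $n=2$ then ${\cal N}={\cal N}_{\{1,2\}}\in{\cal S}_2^{nz}$, and for $n=1$ there are no 2D projections), and by Proposition~\ref{reduc to 1} that $v\in\{-1,1\}^n$, so that every projection of ${\cal N}$ is again a rank 1 network, essential whenever ${\cal N}$ is. After relabeling coordinates we may take the given 2D projection to be ${\cal N}_{\{1,2\}}\in{\cal S}_2^{nz}$.

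The first step is to promote this to: every 2D projection ${\cal N}_{\{i,j\}}$ lies in ${\cal S}_2^{nz}$. Since ${\cal S}_2^{nz}$ networks have both 1D projections equal to $(\leftarrow\!\!\bullet\!\!\rightarrow,\leftarrow\!\!\bullet\!\!\rightarrow)$, an arrow diagram that contains both the $(\to,\gets)$ and the $(\gets,\to)$ patterns, the projections ${\cal N}_{\{1\}},{\cal N}_{\{2\}}$ are 1-alt complete. Hence for $k\ge 3$ the projection ${\cal N}_{\{1,k\}}$ contains the 1-alt complete ${\cal N}_{\{1\}}$, so it is 1-alt complete; it is essential and, by inheritance (Theorem~\ref{thm:inheritance2}), has no capacity for MPNE, so the two-species case of Theorem~\ref{thm:mainext}(3) places it in ${\cal D}$. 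I would then eliminate the other three classes. If ${\cal N}_{\{1,k\}}\in{\cal L}\cup{\cal S}_2^z$, then by Proposition~\ref{lem:S2zLproj} we get ${\cal N}\in{\cal L}\cup{\cal S}_2^z$, contradicting that ${\cal N}$ is essential on $n\ge 3$ species while ${\cal L},{\cal S}_2^z$ are two-species classes. If ${\cal N}_{\{1,k\}}\in{\cal C}$, then by Proposition~\ref{prop:main} we get ${\cal N}\in{\cal C}$ with some corner $\gamma$; but essentiality of ${\cal N}_{\{3\}}$ forces ${\cal N}$ to have a source complex equal to $\gamma+e_3$, which projects on $\{1,2\}$ to $\gamma_{\{1,2\}}$, so ${\cal N}_{\{1,2\}}$ has $\gamma_{\{1,2\}}$ among its (at least two) source complexes, all of which lie in $\{\gamma_{\{1,2\}},\gamma_{\{1,2\}}+e_1,\gamma_{\{1,2\}}+e_2\}$ — hence ${\cal N}_{\{1,2\}}$ has either three source complexes, or two that differ by $e_1$ or $e_2$; either way ${\cal N}_{\{1,2\}}\notin{\cal S}_2^{nz}$, a contradiction. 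Thus ${\cal N}_{\{1,k\}}\in{\cal S}_2^{nz}$ for all $k\ge 3$; combined with the hypothesis this gives ${\cal N}_{\{k\}}=(\leftarrow\!\!\bullet\!\!\rightarrow,\leftarrow\!\!\bullet\!\!\rightarrow)$ for every $k$, and then the identical elimination applied to an arbitrary pair $\{i,j\}$ (1-alt complete because it contains the 1-alt complete ${\cal N}_{\{i\}}$) yields ${\cal N}_{\{i,j\}}\in{\cal S}_2^{nz}$ for all $i\ne j$.

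For the second step, suppose ${\cal M}$ is a three-reaction 1-alt complete subnetwork of ${\cal N}$. It has a 1-alt complete 2D projection ${\cal M}_{\{i,j\}}$: take $i,j$ to be the coordinates whose 1D projections carry the $(\to,\gets)$ and $(\gets,\to)$ patterns of ${\cal M}$, or — if a single coordinate carries both — that coordinate together with any other. Now ${\cal M}_{\{i,j\}}$ is a 1-alt complete subnetwork, with at most three reactions, of ${\cal N}_{\{i,j\}}\in{\cal S}_2^{nz}$; but an ${\cal S}_2^{nz}$ network has no 1-alt complete subnetwork with at most three reactions. Indeed, Proposition~\ref{prop:s2nz} excludes three-reaction 1-alt complete subnetworks; a 1-alt complete two-reaction network on two species must be a zigzag (the two reactions run in opposite directions, which then forces the sign condition of Definition~\ref{defn:zpattern}) and would put ${\cal N}_{\{i,j\}}\in{\cal Z}$, contradicting ${\cal S}_2^{nz}\subseteq$\ac$\setminus{\cal Z}$; and a one-reaction network is never 1-alt complete. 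This contradiction shows ${\cal N}$ has no three-reaction 1-alt complete subnetwork, whence ${\cal N}\in{\cal S}_2^{nz}$ by Proposition~\ref{prop:s2nz}.

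The step I expect to be the main obstacle is the first one — promoting the single-projection hypothesis to a statement about all 2D projections — and within it the part showing that a 2D projection of a ${\cal C}$ network cannot itself be in ${\cal S}_2^{nz}$. This is the one place where essentiality of ${\cal N}$ is used in a genuinely nontrivial way (through the 1D projection on a ``third'' coordinate), and it has to be spelled out carefully, since otherwise the appeal to Proposition~\ref{prop:main} would only produce a harmless alternative rather than the contradiction we need.
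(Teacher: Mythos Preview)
Your proof is correct and follows a somewhat different, more modular route than the paper's. The paper picks reactions $a,b,c,d$ of ${\cal N}$ projecting to the four reactions of ${\cal N}_{\{1,2\}}$ and shows, for each $k\ge 3$, that $a_k=b_k$ and $c_k=d_k$ (otherwise ${\cal N}_{\{1,k\}}$ has three non-collinear source complexes, hence lies in ${\cal C}$, so ${\cal N}\in{\cal C}$ by Proposition~\ref{prop:main}, contradicting ${\cal N}_{\{1,2\}}\in{\cal S}_2^{nz}$); essentiality then forces exactly two source complexes $a=b$ and $c=d$, and a final zigzag/sign check (invoking Proposition~\ref{lem:S2zLproj} to rule out ${\cal N}_{\{1,k\}}\in{\cal S}_2^z$) yields ${\cal N}\in{\cal S}_2^{nz}$. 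You instead first upgrade the hypothesis to \emph{every} 2D projection lying in ${\cal S}_2^{nz}$, by systematically eliminating the other three classes of ${\cal D}$ via Propositions~\ref{lem:S2zLproj} and~\ref{prop:main}, and then close with the combinatorial characterization of Proposition~\ref{prop:s2nz}, avoiding any direct coordinate-by-coordinate tracking of source complexes. Both arguments rest on the same ingredients (the $n=2$ case, Propositions~\ref{lem:S2zLproj} and~\ref{prop:main}); the paper's is shorter and more hands-on, while yours trades a longer first step for a clean, lemma-driven finish. Your elimination of the case ${\cal N}\in{\cal C}$ (producing the source $\gamma+e_3$ from essentiality of ${\cal N}_{\{3\}}$) is also more explicit than the paper's one-line claim about 2D projections of ${\cal C}$ networks.
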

\begin{proof} Let ${\cal N}_{\{1,2\}}=\{a\to a', b\to b', c\to c', d\to d'\}$  have one of the arrow diagrams in 
Figure \ref{fig:S2nz}. Let $k\neq 1,2$. If $a_k\neq b_k$ then  ${\cal N}_{\{1,k\}}\in$\ac and it contains at least three source complexes: $(a_1,a_k),(a_1,b_k),(c_1,c_k)$ are distinct and moreover not collinear, which implies that ${\cal N}_{\{1,k\}}\in{\cal C}$, and by Proposition \ref{prop:main} ${\cal N}\in{\cal C}$. However, $\cal C$ all 2D projections of $\cal C$ networks have three source complexes, contradicting ${\cal N}_{\{1,2\}}\in{\cal S}_2^{nz}$. Likewise, if $c_k\neq d_k$ we get a contradiction.

It remains to consider the case $a_k=b_k$ and $c_k=d_k$. If $a_k=b_k=c_k=d_k$ then since $a\to a', b\to b', c\to c', d\to d'$ were arbitrary representatives of reactions with the same projections on $\{1,2\}$, it follows that ${\cal N}_{\{k\}}$ has a single source complex, contradiction. We must have $n=2$ and ${\cal N}={\cal N}_{\{1,2\}}\in{\cal S}_2^{nz}$. If on the other hand $a_k=b_k\neq c_k=d_k$ for all $k$ then ${\cal N}$ has two source complexes $a=b$ and $c=d$ and either $v_1v_k(c_1-a_1)(c_k-a_k)$ has the same sign for all $k\neq 1$ (i.e. ${\cal N}\in {\cal S}_2^{nz}$) or for some $k\neq 1,2$ the projection ${\cal N}_{\{1,k\}}$ is a zigzag. In the latter case, since ${\cal N}_{\{1,k\}}\in\cal D$ we must have ${\cal N}={\cal N}_{\{1,k\}}\in{\cal S}_2^z$, contradiction. 
\end{proof}

\bibliographystyle{siam}    
\bibliography{rank1Multistat}
\end{document}